\newcommand{\sma}{\wedge}
\newcommand{\Sp}{\operatorname{Sp}}
\renewcommand{\hom}{\operatorname{Hom}}
\newcommand{\id}{\operatorname{Id}}
\newcommand{\map}{\operatorname{Map}}
\DeclareMathOperator*{\colim}{colim}
\newcommand{\mc}{\mathcal}
\newcommand{\mbf}{\mathbf}
\newcommand{\thh}{\operatorname{THH}}
\newcommand{\ho}{\operatorname{Ho}}
\newcommand{\cat}{\mc{C}\text{at}}
\newcommand{\sset}{\mc{S}\text{et}_\Delta}
\theoremstyle{plain}
\newtheorem{thm}{Theorem}[section]
\newtheorem{lem}[thm]{Lemma}
\newtheorem{cor}[thm]{Corollary}
\newtheorem{prop}[thm]{Proposition}
\theoremstyle{definition}
\newtheorem{defn}[thm]{Definition}
\newtheorem{rmk}[thm]{Remark}
\newtheorem{nota}[thm]{Notation}
\title{(Co)Homology of Spectral Categories}
\author{Jonathan A. Campbell}
\date{}
\begin{document}

\maketitle

\begin{abstract}
In this article we develop the cotangent complex and (co)homology theories for spectral categories. Along the way, we reproduce standard model structures on spectral categories. As applications, we show that the invariants to descend to stable $\infty$-categories and we prove a stabilization result for spectral categories. 
\end{abstract}

\tableofcontents

\section{Introduction}

Let $\mc{C}$ be a category with ``enough'' limits. Under sufficiently nice conditions, one may speak of the abelian objects in $\mc{C}$ as those $C \in \mc{C}$ equipped with multiplication $C \times C \to C$, unit $\ast \to C$ and identity maps $i: C \to C$ that satisfy some reasonable axioms. The full subcategory of these objects will be denoted $\mc{C}_{\text{ab}}$ and there is then a natural forgetful functor $\mc{C}_{ab} \to \mc{C}$ the left adjoint of which is called abelianization. A classical example of such a construction is the infinite symmetric product $\operatorname{SP}(X)$ of a CW-complex $X$. By the Dold-Thom theorem, this in fact models homology: $\pi_\ast \operatorname{SP} (X) \cong H_\ast (X)$. In fact, in general when one has model structures on $\mc{C}$ and $\mc{C}_{ab}$, the left derived functor of abelianization models some kind of homology theory. This is the guiding principle in the discussion below. 

Another classical example is that of $R$-algebras. Let $R$ be a commutative ring and $A$ an $R$-algebra. Then, the abelianization of the category of $R$-algebras over $A$, denoted $(\mbf{Alg}_{R})_{/A}$, is the category of $A$-modules. The abelianization of $(B \to A)$ is the module $B \otimes_A \Omega_{B/A}$ where $\Omega_{B/A}$ denote the usual K\"{a}hler differentials. In this situation there are no model structures, and thus no homology theories, but the situation changes when considering simplicial commutative rings.  In \cite{quillen} Quillen develops the (co)homology of commutative rings from this perspective (the same homology theory was independently developed by Andre\cite{andre}). Quillen computes the abelianization of the category of simplicial $R$-algebras over $A$. The corresponding cohomology theory is corepresented by an object called the cotangent complex, which is the correct derived version of K\"{a}hler differentials. The cotangent complex is closely related to deformation theory and is central in algebraic geometry \cite{illusie_1,illusie_2,lichtenbaum_schlessinger}.

Commutative rings are, however, the shadow of another derived object, commutative or $E_\infty$, ring spectra \cite{EKMM}. These may be viewed as the proper topological analogues of commutative rings, and one may ask if they have analogues of various algebraic notions. The question naturally arises of what the Andre-Quillen homology of a commutative ring spectrum should be. In \cite{basterra}, Basterra answers this question and in \cite{basterra_mandell}, Basterra and Mandell answer the question for ring spectra over operads other than the $E_\infty$-operad and compute the contangent complex in some examples.

Another way to derive rings is to first embed them into the category of schemes and then embed schemes, via their derived categories, into differential graded categories \cite{keller}. In this case, one could ask that the corresponding dg-categories have the full complement of notions and invariants that rings carry. This is the crux of Kontsevich's approach to derived algebraic geometry. In this case, what should the analgoue of the cotangent complex be? The question is answered in \cite{tabuada}, where Tabuada develops a notion of ``non-commutative Andre-Quillen homology''. However, dg-categories embed into spectral categories, and we could ask our usual questions. This paper answers these questions. We provide a cohomology theory for spectral categories, and relate it to other known cohomology theories of spectral categories.

Before arriving at a definition of a cohomology theory for spectral categories, we must set down the appropriate model category foundations. In order to defined the derived left adjoint of abelianization, we give a model structure on spectral categories (with fixed objects, see Section 2):

\begin{thm}
Let $\mc{C}\text{at}^{Sp}_{\mc{O}}$ be the category of spectral categories with fixed object set $\mc{O}$. Then $\mc{C}\text{at}^{Sp}_{\mc{O}}$ is a simplicial cofibrantly generated model category. 
\end{thm}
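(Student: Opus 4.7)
The plan is to obtain the model structure by Kan transfer along a free/forgetful adjunction, and to produce the simplicial enrichment pointwise from the one on spectra. Let $\text{Sp}$ denote the category of spectra with its standard cofibrantly generated stable model structure, with generating cofibrations $I$ and generating acyclic cofibrations $J$. A spectral category with fixed object set $\mc{O}$ is an $\mc{O} \times \mc{O}$-indexed family of morphism spectra together with unit and composition maps satisfying the usual associativity and unit axioms, so there is a forgetful functor
$$ U : \mc{C}\text{at}^{Sp}_{\mc{O}} \longrightarrow \prod_{(a,b)\in\mc{O}\times\mc{O}} \text{Sp}, \qquad \mc{D} \longmapsto \{\mc{D}(a,b)\}. $$
Its left adjoint $F$ sends a family $\{X_{a,b}\}$ to the path-algebra spectral category whose morphism spectra are wedges over directed paths $a = a_0 \to \cdots \to a_n = b$ of smash products $X_{a_0,a_1} \wedge \cdots \wedge X_{a_{n-1},a_n}$, with identities freely adjoined. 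Equipping the product category with the projective (pointwise) model structure, I will declare a map of spectral categories to be a weak equivalence (respectively fibration) if $U$ sends it to one, and take $FI$ and $FJ$ as the candidate generating cofibrations and acyclic cofibrations.

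The next step is to verify the hypotheses of the transfer principle (for instance Hirschhorn 11.3.2): that $\mc{C}\text{at}^{Sp}_{\mc{O}}$ is complete and cocomplete, that $U$ preserves sufficiently filtered colimits so that the small object argument runs, and that every relative $FJ$-cell complex is a weak equivalence. Completeness is routine since limits are formed on morphism spectra; cocompleteness follows from monadicity of $U$, using that the adjunction is between presentable categories. The smallness and filtered-colimit-preservation needed for the small object argument are likewise standard once one knows that the monad $UF$ preserves filtered colimits, which it does because smash products of spectra do.

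The main obstacle, and the only non-formal step, is showing that pushouts of maps in $FJ$ (and more generally transfinite compositions thereof) are weak equivalences of spectra on each morphism object. I would handle this by the same word-length filtration used by Schwede–Shipley for spectral categories and by Tabuada for dg-categories: given a pushout $\mc{D} \to \mc{D}'$ along $F(j)$ with $j : A \to B$ in $J$, the morphism spectra $\mc{D}'(a,b)$ admit a sequential filtration whose $n$th subquotient is a wedge of alternating smash products of hom-spectra of $\mc{D}$ with $n$ copies of the cofiber cells associated to $j$. Because each $j$ is an acyclic cofibration of spectra and smashing with any spectrum preserves stable equivalences after appropriate cofibrant/flatness considerations (which apply since the morphism spectra involved are cofibrant as successive pushouts of cofibrations), each subquotient is a weak equivalence, hence so is the filtered colimit, hence so is $\mc{D} \to \mc{D}'$ on each morphism spectrum.

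Finally, the simplicial enrichment is induced objectwise. For a simplicial set $K$ and spectral category $\mc{D}$, define $\mc{D} \otimes K$ to have the same object set $\mc{O}$ and morphism spectra $(\mc{D} \otimes K)(a,b) = \mc{D}(a,b) \wedge K_+$, with composition induced from the diagonal of $K_+$; the cotensor $\mc{D}^K$ has morphism spectra $\mc{D}(a,b)^K$ computed in $\text{Sp}$. The pushout-product axiom (SM7) then follows pointwise from the fact that $\text{Sp}$ is a simplicial model category, since the generating (acyclic) cofibrations $FI$ and $FJ$ are built from generators of $\text{Sp}$ and the simplicial structure commutes with $F$ on free objects. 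This yields the desired simplicial cofibrantly generated model structure.
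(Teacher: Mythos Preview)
Your transfer of the model structure along the free/forgetful adjunction is essentially the paper's approach: the paper writes $\cat^{\Sp^{\Sigma}}_{\mc{O}}$ as algebras over the free-category monad $\mbf{T}$ on $\mbf{Gr}^{\Sp^{\Sigma}}_{\mc{O}}$ and invokes the Schwede--Shipley lifting theorem, with condition~$(\ast)$ (relative $\mbf{T}J$-cell complexes are weak equivalences) handled by the pointwise monoid axiom, which is exactly what your word-length filtration argument unpacks.

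The simplicial enrichment, however, is not correct as you have written it. You propose $(\mc{D}\otimes K)(a,b)=\mc{D}(a,b)\wedge K_+$ with ``composition induced from the diagonal of $K_+$''. But the diagonal furnishes a map $K_+\to K_+\wedge K_+$, which is the wrong direction: to compose in your candidate category you would need $K_+\wedge K_+\to K_+$, and no such natural map exists for a general simplicial set. (Your cotensor $\mc{D}^K$ \emph{is} pointwise and is fine, since the forgetful functor to graphs creates limits and cotensors; the diagonal of $K$ does give the required composition there.) Already in the one-object case this fails: the simplicial tensor of an associative ring spectrum $R$ by $K$ does not have underlying spectrum $R\wedge K_+$.

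The paper builds the tensor in $\cat^{\Sp^{\Sigma}}_{\mc{O}}=\mbf{Gr}[\mbf{T}]$ by the standard monadic recipe from EKMM: for an algebra $C$ with structure map $\xi$, one takes the coequalizer
\[
\mbf{T}(\mbf{T}C\otimes X)\;\rightrightarrows\;\mbf{T}(C\otimes X)\;\longrightarrow\; C\otimes X
\]
in $\mbf{Gr}$, using the map $\nu:\mbf{T}C\otimes X\to\mbf{T}(C\otimes X)$ adjoint to the action of $\mbf{T}$ on morphisms. With this tensor in hand, SM7 is proved not ``pointwise'' but by reducing to generating cofibrations $\mbf{T}A_{ij}\to\mbf{T}B_{ij}$ and using the free/forgetful adjunction $\map_{\cat}(\mbf{T}X,\mc{X})\cong\map_{\mbf{Gr}}(X,\mc{X})$ to pull the pullback-hom condition back to $\mbf{Gr}^{\Sp^{\Sigma}}_{\mc{O}}$, where the simplicial model axiom is already known. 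Your final paragraph should be rewritten along these lines; as it stands the claimed commutation of the simplicial structure with $F$ has no meaning because the tensor you wrote down is not a spectral category.
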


Similar results are obtained in \cite{schwede_shipley_monoidal} and \cite{dundas_V_categories}. We reprove the theorem in order to use the techniques for producing model structures on more exotic types of spectral categories.

After this is set up, we follow Basterra in producing a string of adjunctions which computes the left adjoint of abelianzation and defines the cotangent complex. With a contangent complex in hand, we have 

\begin{thm}
Let $\mc{A}$ be a spectral category. Then there is a cohomology theory $\operatorname{TDC}$  on spectral categories $\mc{C}\text{at}^{\text{Sp}}_{/\mc{A}}$. There is also a homology theory,  $\operatorname{TDH}$. 
\end{thm}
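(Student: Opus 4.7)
The plan is to follow the Basterra template \cite{basterra} as closely as possible, replacing augmented commutative ring spectra by spectral categories in the slice $\mc{C}\text{at}^{Sp}_{/\mc{A}}$, and replacing $A$-modules by $\mc{A}$-bimodules (i.e.\ spectral functors $\mc{A}^{op}\wedge\mc{A}\to\mathbf{Sp}$). First I would upgrade Theorem~1.1 to the relevant slice categories. Since $\mc{C}\text{at}^{Sp}_{\mc{O}}$ is a cofibrantly generated simplicial model category, the overcategory $\mc{C}\text{at}^{Sp}_{\mc{O}/\mc{A}}$ inherits a cofibrantly generated simplicial model structure by the standard slice construction, and I would then observe that restricting further to the \emph{augmented} objects $\mc{B}\to\mc{A}$ which admit a section produces a pointed model category $(\mc{C}\text{at}^{Sp}_{/\mc{A}})_{\ast}$ suitable for abelianization.

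Next I would identify the abelian group objects. Just as abelian group objects in augmented $A$-algebras are $A$-modules (with square-zero multiplication), I expect that abelian objects in $(\mc{C}\text{at}^{Sp}_{\mc{O}/\mc{A}})_{\ast}$ are precisely $\mc{A}$-bimodules viewed as square-zero extensions $\mc{A}\vee M\to\mc{A}$. I would prove this by checking the defining diagrams for abelian group structure, using that the smash product of bimodules over $\mc{A}$ is the standard two-sided bar construction. The forgetful functor $\mathbf{Mod}_{\mc{A}\text{-}\mc{A}}\to(\mc{C}\text{at}^{Sp}_{/\mc{A}})_{\ast}$ sending $M\mapsto \mc{A}\vee M$ then admits a left adjoint, the \emph{abelianization}, and I would verify this is a Quillen adjunction by exhibiting it as a composite of two Quillen adjunctions: first passing to non-unital $\mc{A}$-algebras via the augmentation ideal functor $I(\mc{B}\to\mc{A}) := \mathrm{fib}(\mc{B}\to\mc{A})$ (which on spectral categories is computed objectwise), then from non-unital to bimodules via $N\mapsto N/(N\wedge_{\mc{A}}N)$, the indecomposables functor.

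With these Quillen pairs in place, the cotangent complex of a map $f:\mc{B}\to\mc{A}$ is defined as the total left derived functor of the composite applied to $f$:
\[
L_{\mc{A}/\mc{B}} \;:=\; \mathbb{L}(\text{indec})\circ \mathbb{L}(\text{aug.ideal})(\mc{B}\to\mc{A}),
\]
an object of the homotopy category of $\mc{A}$-bimodules. Topological derived cohomology and homology with coefficients in an $\mc{A}$-bimodule $M$ are then defined by
\[
\operatorname{TDC}^{n}(\mc{B}/\mc{A};M) := [L_{\mc{A}/\mc{B}},\Sigma^{n}M]_{\mathbf{Mod}_{\mc{A}\text{-}\mc{A}}},\qquad
\operatorname{TDH}_{n}(\mc{B}/\mc{A};M) := \pi_{n}(L_{\mc{A}/\mc{B}}\wedge^{\mathbb{L}}_{\mc{A}\wedge\mc{A}^{op}}M).
\]
Functoriality, long exact sequences for a composable pair $\mc{C}\to\mc{B}\to\mc{A}$, and the usual transitivity cofiber sequence $L_{\mc{A}/\mc{B}}\wedge^{\mathbb{L}}_{\mc{A}}\mc{A}\to L_{\mc{A}/\mc{C}}\to L_{\mc{A}/\mc{B}}$ then follow formally from the Quillen adjunctions and the fact that fibrations and cofibrations are detected on underlying spectra.

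The main obstacle I anticipate is the abelianization identification: verifying that $\mc{A}\vee M$ with its square-zero enrichment is genuinely the universal abelian object and that the non-unital/indecomposables pair is Quillen. Spectral categories have an object-set that must be tracked throughout every adjunction, so while the fixed-object version is straightforward, showing that one may replace the non-unital $\mc{A}$-algebra $I(\mc{B})$ by its indecomposables without disturbing the object set requires that the smash product $\wedge_{\mc{A}}$ of bimodules preserves cofibrations and weak equivalences in the appropriate variables; this is essentially the spectral-category analogue of the monoid axiom, and I would borrow the flatness arguments from \cite{schwede_shipley_monoidal} to establish it. Everything downstream (long exact sequences, base change, transitivity) is then a formal consequence of derived adjunction.
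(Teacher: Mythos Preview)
Your proposal is correct and follows essentially the same Basterra-style approach as the paper: identify abelian objects with bimodules via square-zero extension, then factor the abelianization left adjoint through an augmentation-ideal step and an indecomposables step, each shown to be Quillen. The only notable differences are organizational---the paper inserts an explicit intermediate category of spectral categories with $(\mc{A},\mc{A})$-action (via the free-action functor $\mc{A}\amalg - \amalg \mc{A}$) before taking the augmentation ideal, and defines $\operatorname{TDH}$ via the cyclic bar construction rather than the derived tensor product---but these are equivalent packagings of the same construction.
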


\begin{rmk}
$\operatorname{TDC}$ stands for ``topological derivation cohomology'' and similarly for ``homology.'' We avoid using ``Andre-Quillen'' homology since that usually refers to commutative contexts. 
\end{rmk}

These homology and cohomology theories fit into exact sequences with well known homology theories on categories. In particular, it is related to topological Hochschild homology (hereafter, $\thh$)

\begin{thm}
Let $\mc{C}$ be a spectral category. Then there is a cofiber sequence of spectra
\[
\operatorname{TDH}(\mc{C}) \to \bigvee_{c,d \in \operatorname{ob} \mc{C}} \mc{C}(c,d) \to \thh (\mc{C}) 
\]
\end{thm}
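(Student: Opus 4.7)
The proof strategy adapts Basterra's approach to topological Andr\'e--Quillen homology \cite{basterra} to the associative setting of spectral categories. By the string of Quillen adjunctions constructed earlier, the derived abelianization factors through non-unital $\mc{C}$-categories ($\mathrm{NUC}_{\mc{C}}$) and $\mc{C}$-bimodules:
\[
\mc{C}\text{at}^{Sp}_{/\mc{C}} \overset{I}{\rightleftarrows} \mathrm{NUC}_{\mc{C}} \overset{Q}{\rightleftarrows} \mc{C}\text{-Bimod},
\]
where $I$ takes the augmentation ideal and $Q$ is the indecomposables functor, defined as the cofiber of multiplication. First I would choose a cofibrant replacement $P \trivfib \mc{C}$ in $\mc{C}\text{at}^{Sp}_{/\mc{C}}$, so that $\mathrm{TDH}(\mc{C}) \simeq Q(I(P))$ as spectra.

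The plan is to derive the desired cofiber sequence from the fundamental sequence in $\mathrm{NUC}_{\mc{C}}$. By the very definition of $Q$, there is a canonical cofiber sequence of $\mc{C}$-bimodules
\[
I(P) \sma^L_{\mc{C}} I(P) \longrightarrow I(P) \longrightarrow Q(I(P)) \simeq \mathrm{TDH}(\mc{C}).
\]
Applying the forgetful functor to spectra (wedge over pairs of objects) and performing a rotation in the stable category then yields a three-term sequence with $\mathrm{TDH}(\mc{C})$ as the fiber.

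It then remains to perform two identifications. First, for a free cofibrant replacement $P$ of $\mc{C}$ over $\mc{C}$, the underlying spectrum of the augmentation ideal $I(P)$ is naturally equivalent to $\bigvee_{c,d \in \mathrm{ob}\,\mc{C}} \mc{C}(c,d)$; this is essentially the statement that the free NUC resolution has the expected underlying object. Second, and more delicately, $I(P) \sma^L_{\mc{C}} I(P)$ should be identified with $\thh(\mc{C})$, using the description $\thh(\mc{C}) \simeq \mc{C} \sma^L_{\mc{C}^e} \mc{C}$ as a derived coend and the two-sided bar resolution of $\mc{C}$ as a $\mc{C}$-bimodule.

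The main obstacle is this last identification. One has to compare the bar complex computing the derived smash $\sma^L_{\mc{C}}$ in $\mathrm{NUC}_{\mc{C}}$ with the cyclic bar complex defining $\thh(\mc{C})$, matching the simplicial structures and tracking any shifts that must be absorbed. Getting the orientation of the rotation right, so that $\mathrm{TDH}(\mc{C})$ sits as the fiber of $\bigvee_{c,d} \mc{C}(c,d) \to \thh(\mc{C})$ rather than as the cofiber, is the subtle bookkeeping step that ultimately pins down the stated form of the sequence.
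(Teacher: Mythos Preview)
Your starting point---the cofiber sequence
\[
I(P)\otimes_{\mc{C}} I(P)\longrightarrow I(P)\longrightarrow Q(I(P))\simeq \mbf{L}_{\mc{C}}
\]
in $(\mc{C},\mc{C})$-bimodules---is exactly the one the paper uses. The divergence, and the genuine gap, is in the functor you apply to it. You propose to apply the ``forgetful'' functor that sends a bimodule $\mc{M}$ to $\bigvee_{c,d}\mc{M}(c,d)$, and then identify the three terms with $\thh(\mc{C})$, $\bigvee\mc{C}(c,d)$, and $\operatorname{TDH}(\mc{C})$. But $\operatorname{TDH}(\mc{C})$ is \emph{not} defined in this paper as the underlying spectrum of $\mbf{L}_{\mc{C}}$; it is defined as the cyclic nerve $N^{\text{cy}}(\mc{C};\mbf{L}_{\mc{C}})$, i.e.\ $\thh$ with coefficients in the cotangent complex. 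So the assertion ``$\operatorname{TDH}(\mc{C})\simeq Q(I(P))$ as spectra'' is false as stated, and the whole identification scheme collapses. Likewise, under the wedge-over-pairs functor the term $I(P)\otimes_{\mc{C}} I(P)$ becomes $\bigvee_{a,b,k}\mc{C}(a,k)\sma\mc{C}(k,b)$, which is a single simplicial level of a bar complex, not $\thh(\mc{C})\simeq \mc{C}\sma^{L}_{\mc{C}^{e}}\mc{C}$.

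The paper instead applies $N^{\text{cy}}(\mc{C};-)$ to the same cofiber sequence; this functor preserves cofiber sequences of bimodules (Prop.~\ref{thh_cofiber}). Two of the three resulting terms are then immediate from the definitions: $N^{\text{cy}}(\mc{C};\mbf{L}_{\mc{C}})=\operatorname{TDH}(\mc{C})$ and $N^{\text{cy}}(\mc{C};\mc{C})=\thh(\mc{C})$. The only nontrivial identification is $N^{\text{cy}}(\mc{C};\mc{C}\otimes\mc{C})\simeq\bigvee_{a,b}\mc{C}(a,b)$, and this is precisely what the two-sided bar lemma (Lem.~\ref{two_sided_bar}) buys you: the cyclic nerve with coefficients in $\mc{C}\otimes\mc{C}$ unwinds to $\bigvee_{a,b}B(\mc{C}(-,b),\mc{C},\mc{C}(a,-))\simeq\bigvee_{a,b}\mc{C}(a,b)$. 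So the fix is simple: replace your forgetful functor with $N^{\text{cy}}(\mc{C};-)$, and the identifications you were hoping for line up automatically, with the delicate one handled by the bar lemma rather than by any direct comparison of $I(P)\otimes I(P)$ with $\mc{C}\sma^{L}_{\mc{C}^{e}}\mc{C}$.
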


Once we have set up cohomology theories, we move on to discuss stabilization. As in the case for rings and spectra \cite{basterra_mandell}, once can identify the stabilization of spectral categories with some category of modules. More precisely, we have the following theorem. 

\begin{thm}
Let $\mc{A}$ be a spectral category, then there is a zig-zag of Quillen equivalences
\[
\operatorname{Mod}_{(\mc{A},\mc{A})} \leftarrow \cdots \rightarrow \operatorname{Sp}(\cat^{(\mc{A},\mc{A})\text{-act}}_{/\mc{A}}). 
\]
Furthermore, under this equivalence, an $\mc{A}$-bimodule $\mc{M}$ is sent to the spectrum that corepresents the cotangent complex. 
\end{thm}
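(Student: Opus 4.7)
The plan is to adapt Basterra's argument \cite{basterra} for augmented commutative ring spectra to the spectral-category setting. The zig-zag will factor through non-unital $(\mc{A},\mc{A})$-acted categories, where square-zero extensions are most naturally formulated:
\[
\operatorname{Mod}_{(\mc{A},\mc{A})} \xleftarrow{\sim} \operatorname{Sp}(\operatorname{Mod}_{(\mc{A},\mc{A})}) \xleftarrow{\sim} \operatorname{Sp}(\cat^{(\mc{A},\mc{A})\text{-act, nu}}) \xrightarrow{\sim} \operatorname{Sp}(\cat^{(\mc{A},\mc{A})\text{-act}}_{/\mc{A}}).
\]
I would establish each arrow as a Quillen equivalence in turn. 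The leftmost arrow is automatic because $\operatorname{Mod}_{(\mc{A},\mc{A})}$ is already a stable model category, so $\Sigma^\infty$ and $\Omega^\infty$ form a Quillen equivalence with its own spectrum category.

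For the rightmost arrow, I would give a Quillen equivalence between $\cat^{(\mc{A},\mc{A})\text{-act}}_{/\mc{A}}$ and the category of non-unital $(\mc{A},\mc{A})$-acted categories via the augmentation-ideal / split square-zero extension adjunction $\mc{B} \mapsto I\mc{B} := \operatorname{hofib}(\mc{B} \to \mc{A})$, with inverse $\mc{J} \mapsto \mc{J} \vee \mc{A}$. This is formally parallel to Basterra's Proposition 2.2 and uses the monoidal structure (smash product over $\mc{A}$) on acted categories developed earlier in the paper. Applying $\operatorname{Sp}$ preserves this equivalence by general nonsense.

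The heart of the matter is the middle arrow. I would construct an adjunction between $\mc{A}$-bimodules and non-unital acted categories---the right adjoint endows $\mc{M}$ with trivial multiplication, and the left adjoint sends $\mc{J}$ to its indecomposables $\mc{J}/(\mc{J} \sma_{\mc{A}} \mc{J})$---and show it becomes a Quillen equivalence after stabilization. Following Basterra, the key claim is that for a cofibrant non-unital $\mc{J}$ the quotient map $\mc{J} \to \mc{J}/(\mc{J} \sma_{\mc{A}} \mc{J})$ becomes a stable equivalence. This reduces to showing that the iterated smash powers $\mc{J}^{\sma_{\mc{A}} k}$ become increasingly connected with $k$, so the higher terms in the bar filtration vanish after applying $\Omega^\infty \Sigma^\infty$.

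For the second assertion, I would trace the cotangent complex $L_{\mc{B}/\mc{A}} = \mathbb{L}\operatorname{Ab}(\mc{B})$ through the zig-zag: derived abelianization in the overcategory corresponds to derived indecomposables in the non-unital category, and derived indecomposables on a square-zero extension recover the underlying bimodule. Hence the bimodule $\mc{M}$ is sent to the spectrum corepresenting $\mc{B} \mapsto \operatorname{TDC}(\mc{B}; \mc{M})$, as required. The main obstacle will be the connectivity estimates in the middle step: where Basterra uses the symmetric-power filtration of the augmentation ideal for commutative $S$-algebras, the associative, many-object setting forces us to work with a bar/tensor filtration and to track $\mc{A}$-bilinearity carefully at each stage. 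Once the connectivity of $\mc{J}^{\sma_{\mc{A}} k}$ is established, the rest of the argument follows Basterra's template essentially line-for-line.
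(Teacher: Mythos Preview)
Your zig-zag and the paper's are identical in shape: both pass through $\operatorname{Sp}(\operatorname{Mod}_{(\mc{A},\mc{A})})$ and $\operatorname{Sp}(\cat^{(\mc{A},\mc{A})\text{-act,nu}})$, and both handle the outer steps exactly as you do (stability of the bimodule category on the left, the $(K,I)$ equivalence on the right, already an equivalence before stabilizing). The difference is in the middle step. The paper uses the free--forgetful adjunction $(\mbf{T}_{\mathrm{nu}},U)$ rather than the indecomposables--zero adjunction $(Q,Z)$; it then records separately that $(Q,Z)$ also stabilizes to an equivalence. Either adjunction works, so your route is fine.

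Where your sketch goes slightly astray is the connectivity claim. It is \emph{not} true that $\mc{J}^{\otimes_{\mc{A}} k}$ becomes more connected as $k$ grows for a fixed cofibrant non-unital $\mc{J}$; there is no norm or transfer in the associative, many-object setting to produce extra connectivity. What actually happens is simpler, not harder, than in Basterra's commutative argument: after replacing $\mc{J}$ by $\Sigma^n\mc{J}$ (which is what stabilization does), the $k$-fold tensor power picks up connectivity roughly $n(k-1)$, so the higher-power terms die in the colimit over $n$. In the free-monoid case this is completely formal---the paper simply observes that the analogue of \cite[Lem.~8.9]{basterra_mandell} is automatic for the monad $\mbf{T}_{\mathrm{nu}}$. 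So the ``main obstacle'' you flag is not an obstacle at all here; your anticipated bar/tensor filtration argument collapses to a one-line connectivity count once you remember the suspensions. (Also, the relevant monoidal structure is the graph tensor $\otimes$, not a relative smash $\sma_{\mc{A}}$; there is no balanced smash over $\mc{A}$ in play.)
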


This is useful, for example, in placing the results of Dundas-McCarthy \cite{dundas_mccarthy} on the derivative of K-theory into a more standard context. We hope to return to this in future work.

For the convenience of the reader we provide a rough table of analogies

\vspace{2mm}

\begin{center}
\begin{tabular}{|c|c|}\hline
\qquad ring \qquad & spectral category\\\hline
$A$ & $\mc{A}$\\ \hline
$(\operatorname{CAlg}_A)_{/A}$ & $\cat^{(\mc{A},\mc{A})\text{act}}_{/\mc{A}}$\\\hline
$\Omega_A$ & $\mbf{L}_{\mc{A}}$ \\\hline
$\Omega_{B/A}$ & $\mathbf{L}_{\mc{B}/\mc{A}}$ \\\hline
$AQ$-homology & $\operatorname{TDH}$ \\\hline
$AQ$-cohomology & $\operatorname{TDC}$ \\\hline
\end{tabular}
\end{center}

\section{Preliminary Remarks}

Below, we work with categories with fixed objects. Some preliminary remarks are in order to clarify why we made this choice. In fact, this is not necessary, but will make our work significantly easier. Since we seek to define some sort of cotangent complex, what we really want to do is construct a left adjoint to an square-zero extension functor. Let us briefly explain square zero extensions with elaboration left for the sequel. The contents of this section are not strictly necessary for the discussion below and may be safely skipped by the reader. 

Let $\mc{A}$ be a spectral category and $\mc{M}: \mc{A} \sma \mc{A}^{\text{op}} \to \operatorname{Sp}$ an $\mc{A}$-bimodule (Defn. \ref{bimodule}). Then we may form a spectral category $\mc{A} \ltimes \mc{M}$ as follows. Let $\mc{A} \ltimes \mc{M} (x, y) = \mc{A}(x,y) \vee \mc{M}(x,y)$ and define composition via composition of $\mc{A}$, the module action of $\mc{A}$ on $\mc{M}$ and the double $\mc{M}$ terms as zero. A priori this defines a functor
\[
\mc{A} \ltimes (-) : \operatorname{Mod}_{(\mc{A},\mc{A})} \to \cat_{\operatorname{Sp}}. 
\]
Since we'll be interested in stabilization, we should specify that in fact we have a functor
\[
\mc{A} \ltimes (-): \operatorname{Mod}_{(\mc{A},\mc{A})} \to (\cat_{\operatorname{Sp}})_{/\mc{A}}. 
\]
By Freyd's adjoint functor theorem, using an argument analagous to the one found in \cite{tabuada}, this functor has a left adjoint. Further, it is not hard to show that the left adjoint respects various homotopical structures and can be derived. So we have a functor, $\Omega^1$, such that
\[
\hom_{\cat_{/\mc{A}}} (\mc{C}, \mc{A} \ltimes \mc{M}) \cong \hom_{\operatorname{Mod}_{(\mc{A},\mc{A})}} (\Omega^1 \mc{C}, \mc{M}). 
\]

However, in this form the functor $\Omega^1$ is not easy to identify explicitly. It is much more productive to work with categories with fixed objects. The following discussion illustrates that we lose no essential information by doing so. We begin with functor that brutally reduces spectral categories over $\mc{A}$ to have the same object set as $\mc{A}$. 

\begin{defn}
Let $\mc{A}$ be a spectral category with object set $|\mc{A}|$. Let $\operatorname{Red}$ be a functor (``reduction'')
\[
\operatorname{Red}: \cat^{\Sp}_{/\mc{A}} \to \cat^{|\mc{A}|, \Sp}_{/\mc{A}} 
\]
defined as follows. Given $F: \mc{C} \to \mc{A}$ we define a new category $\widetilde{\mc{C}} = \operatorname{Red}(\mc{C})$ with
\begin{itemize}
\item[-] \textbf{objects}: the same objects as $\mc{A}$. 
\item[-] \textbf{morphisms}: Given $a, b \in \mc{A}$ consider the sets $\{c_i\}_{i \in I}$, $\{d_j\}_{j \in J }$ which are the pre-images under $F$ of $a$ and $b$, respectively. Define the morphisms to be
\[
\widetilde{\mc{C}}(a, b) := \bigvee_{I, J} \mc{C}(c_i, d_j)
\]
\item[-] \textbf{composition}: The composition
\[
\widetilde{\mc{C}}(a, b) \sma \widetilde{\mc{C}}(b, c) = \left(\bigvee_{I,J} \mc{C}(c_i, d_j)\right) \sma \left(\bigvee_{J, K} \mc{C}(d_j, e_k)\right) \to \widetilde{\mc{C}}(a,c)
\]
is given by compositions and mapping non-composable smash products to a point. 
\end{itemize}
\end{defn}

Let us check that this is a functor. Let $F': \mc{D} \to \mc{A}$ be another category over $\mc{A}$ and $G: \mc{C} \to \mc{D}$ be a functor over $\mc{A}$, so that $F = (G \circ F')$. Now $G$ induces a map $\widetilde{\mc{C}}(a,b)\to \widetilde{\mc{D}}(a,b)$ which is 
\[
\bigvee_{I,J} \mc{C}(c_i, d_j) \xrightarrow{G} \bigvee_{I,J} \mc{D}(G(c_i), G(d_j)) \hookrightarrow \bigvee_{I',J'} \mc{D}(c'_i, d'_j)
\]
where $F^{-1}(a) = \{c_i\}_I, F^{-1}(b) = \{d_j\}_J$ and $(F')^{-1}(a) = \{c'_i\}_{I'}, (F')^{-1}(b) = \{d'_j\}_{J'}$. Composition is respected, and thus $G$ induces a map $\widetilde{\mc{C}} \to \widetilde{\mc{D}}$. 

\begin{prop}
The functor $\operatorname{Red}$ defines a left adjoint to the fully faithful inclusion $\iota: \cat^{|\mc{A}|}_{/\mc{A}} \to \cat_{/\mc{A}}$. That is, we have an adjunction
\[
\operatorname{Red}:  \cat_{/\mc{A}} \leftrightarrows  \cat^{|\mc{A}|}_{/\mc{A}} : \iota 
\]
\end{prop}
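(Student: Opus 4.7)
The plan is to exhibit, for each $\mc{C} \in \cat_{/\mc{A}}$ and $\mc{D} \in \cat^{|\mc{A}|}_{/\mc{A}}$, a natural bijection
\[
\hom_{\cat^{|\mc{A}|}_{/\mc{A}}}(\operatorname{Red}(\mc{C}), \mc{D}) \cong \hom_{\cat_{/\mc{A}}}(\mc{C}, \iota(\mc{D})).
\]
The crucial first observation is that any functor $G : \mc{C} \to \iota(\mc{D})$ lying over $\mc{A}$ is rigid on objects: since $\iota(\mc{D})$ has object set $|\mc{A}|$ and its structure map to $\mc{A}$ is the identity on objects, $G$ is forced to act as $c \mapsto F(c)$. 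The data of such a $G$ is therefore precisely a family of spectrum maps $G_{c,d} : \mc{C}(c,d) \to \mc{D}(F(c), F(d))$ satisfying the usual unit and composition coherences inherited from $\mc{C}$.

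Given such a family, I would use the universal property of the wedge to assemble, for each pair $a,b \in |\mc{A}|$, a single map $\widetilde{G}_{a,b} : \bigvee_{F(c)=a,\, F(d)=b} \mc{C}(c,d) \to \mc{D}(a,b)$, giving the candidate functor $\widetilde{G} : \operatorname{Red}(\mc{C}) \to \mc{D}$. Conversely, from a functor $\widetilde{G} : \operatorname{Red}(\mc{C}) \to \mc{D}$ (necessarily the identity on objects), one recovers the components $G_{c,d}$ by pre-composing with the inclusion of each wedge summand $\mc{C}(c,d) \hookrightarrow \operatorname{Red}(\mc{C})(F(c), F(d))$. These two assignments are visibly inverse to one another, and naturality in both variables is immediate from the universal property of the wedge and the functoriality of $\operatorname{Red}$.

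The main verification, and the chief obstacle, is that the assembled $\widetilde{G}$ actually respects composition in $\operatorname{Red}(\mc{C})$. On the smash summands $\mc{C}(c_i, d_j) \sma \mc{C}(d_j, e_k)$ with matching middle object, composition in $\operatorname{Red}(\mc{C})$ agrees with the composition inherited from $\mc{C}$, so compatibility reduces to the composition axiom for $G$. On the non-matching summands the composition in $\operatorname{Red}(\mc{C})$ is the basepoint, which is automatically preserved by any pointed map of spectra. Unit preservation reduces analogously to the condition that each $G_{c,c}$ preserves $\id_c$. Having established the hom-bijection, one checks that the counit $\operatorname{Red}(\iota(\mc{D})) \to \mc{D}$ is an isomorphism: because $\mc{D}$ already has object set $|\mc{A}|$ and its structure map to $\mc{A}$ is the identity on objects, each fiber $F^{-1}(a)$ is a singleton, so every wedge in the definition of $\operatorname{Red}(\iota(\mc{D}))$ collapses to the corresponding morphism spectrum of $\mc{D}$. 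This simultaneously confirms that $\iota$ is fully faithful, as asserted.
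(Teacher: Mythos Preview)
Your proposal is correct and follows essentially the same approach as the paper: both arguments reduce the hom-set bijection to the universal property of the wedge, observing that maps $\bigvee \mc{C}(c_i,d_j) \to \mc{D}(a,b)$ compatible with composition are the same as families of maps $\mc{C}(c_i,d_j) \to \mc{D}(a,b)$ compatible with composition. You simply supply more detail than the paper does, in particular the explicit check on matching versus non-matching summands and the identification of the counit.
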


\begin{proof}
To see that it is an adjunction we have to check that
\[
\hom_{\cat^{|\mc{A}|}_{/\mc{A}}} (\operatorname{Red}(\mc{B}), \mc{C}) \xrightarrow{\cong} \hom_{\cat_{/\mc{A}}} (\mc{B}, \iota \mc{C}).
\]
For $a, b \in \operatorname{ob}(\mc{A})$, the set on the left is the set of spectrum maps $\bigvee \mc{B}(c_i, d_j) \to \mc{C}(a, b)$ compatible with composition, which is of course just equivalent to collections of maps $\mc{B}(c_i, d_j) \to \mc{C}(a, b)$ compatible with composition, which is the hom set on the right. 
\end{proof}

In view of this, we may restrict our attention to categories with fixed objects. Indeed, we have the string of adjunctions
\[
\xymatrix{
\cat^{Sp,|\mc{A}|}_{/\mc{A}} \ar@<.5ex>[r]^{\iota} & \cat^{\Sp}_{/\mc{A}} \ar@<.5ex>[l]^{\operatorname{Red}} \ar@<.5ex>[r]^{\Omega} & \operatorname{Mod}_{(\mc{A},\mc{A})} \ar@<.5ex>[l]^{\mc{A}\ltimes (-)} 
}
\]
Thus, we have the following series of equivalences
\begin{align*}
\hom_{\operatorname{Mod}_{(\mc{A},\mc{A})}} (\Omega^1 \mc{C}, \mc{M}) &\simeq \hom_{\cat^{\Sp}_{/\mc{A}}} (\mc{C}, \mc{A} \ltimes \mc{M}) \\ &\simeq \hom_{\cat^{\Sp,|\mc{A}|}_{/\mc{A}}} (\operatorname{Red}(\mc{C}), \mc{A}\ltimes \mc{M}) \\ 
&\simeq \hom_{\cat^{\Sp}_{/\mc{A}}} (\iota \operatorname{Red}(\mc{C}), \mc{A}\ltimes \mc{M})\\
&\simeq \hom_{\operatorname{Mod}_{(\mc{A},\mc{A})}} (\Omega^1 \operatorname{Red}(\mc{C}), \mc{M})
\end{align*}

where the third equivalence is by full-faithfulness of the inclusion $\iota$. Thus, if we compute a left adjoint to $\mc{A}\ltimes (-)$ out of $\cat^{\Sp,|\mc{A}|}_{/\mc{A}}$, we have computed one out of $\cat^{\Sp}_{/\mc{A}}$. 

In order to explictly identify the left adjoint of the square zero extension it will be considerly easier to work with fixed objects. In particular, once we have done this, there is a strong analogy with the classical one object case and the theory proceeds in much the same way. 

Before we move on, this is a natural place to record the following fact (for definitions, see below)

\begin{prop}
If $\mc{B}, \mc{C}$ are in $\operatorname{Cat}^{\text{Sp}}_{/\mc{A}}$ and there is a DK-equivalence $\mc{B} \xrightarrow{\sim} \mc{C}$, then $\operatorname{Red}(\mc{B}) \to \operatorname{Red}(\mc{C})$ is a DK-equivalence. 
\end{prop}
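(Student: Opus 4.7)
The plan is to verify the two defining conditions of a DK-equivalence directly for $\operatorname{Red}(G)\colon \operatorname{Red}(\mc{B}) \to \operatorname{Red}(\mc{C})$. Because $\operatorname{Red}$ collapses every category in $\cat^{\Sp}_{/\mc{A}}$ onto the object set $|\mc{A}|$ and $\operatorname{Red}(G)$ is the identity on objects, essential surjectivity at the level of $\pi_0$-categories is automatic. Thus the entire content reduces to checking that the induced map on each mapping spectrum is a stable equivalence.

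Fix $a,b \in |\mc{A}|$, write $\{b_i\}_{i\in I_a}$, $\{b_j'\}_{j\in I_b}$ for the preimages of $a,b$ under $F_\mc{B}$ and $\{c_k\}_{k\in K_a}$, $\{c_l'\}_{l\in K_b}$ for those under $F_\mc{C}$. Since $G$ is a morphism over $\mc{A}$, it restricts to maps of fibers $I_a \to K_a$ and $I_b \to K_b$, and the induced map on reduced mapping spectra factors as
\[
\bigvee_{(i,j) \in I_a \times I_b} \mc{B}(b_i, b_j') \xrightarrow{\ \bigvee G\ } \bigvee_{(i,j)} \mc{C}(G(b_i), G(b_j')) \longrightarrow \bigvee_{(k,l) \in K_a \times K_b} \mc{C}(c_k, c_l').
\]
The first arrow is a wedge of the mapping-spectrum equivalences supplied by the DK-equivalence hypothesis, hence a stable equivalence — arbitrary wedges preserve stable equivalences between pointwise cofibrant symmetric/orthogonal spectra, and one may reduce to that case by cofibrant replacement.

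The substantive step, and the main obstacle, is the second arrow: it is the inclusion of the summands parametrized by the image of $G$ on fibers, so one must account for the summands from $c_k \in K_a \setminus G(I_a)$. The essential surjectivity portion of the DK-equivalence hypothesis produces, for each such $c_k$, an object $b_i \in \mc{B}$ and a $\pi_0$-isomorphism $G(b_i) \simeq c_k$ in $\mc{C}$; compatibility with $F_\mc{C}\circ G = F_\mc{B}$ places $b_i$ in $I_a$. After passing to a fibrant replacement of $\mc{C}$ in $\cat^{\Sp}_{/\mc{A}}$ — or, equivalently, lifting these $\pi_0$-isomorphisms to strict identifications along $G$ — the induced identification of summands becomes strict, and the second map is then a weak equivalence by the wedge argument above. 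Composing the two factors yields the desired DK-equivalence.
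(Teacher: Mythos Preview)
The paper offers no proof of this proposition; it is stated and followed only by the remark that $\operatorname{Red}$ ``clearly'' preserves DK-equivalences. You have correctly isolated the difficulty as your ``second arrow,'' but the fix you propose does not work, and in fact the statement as written is false.

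Take $\mc{A}$ to have a single object $a$ with $\mc{A}(a,a)=S$. Let $\mc{B}$ have one object $b$ with $\mc{B}(b,b)=S$, and let $\mc{C}$ have two objects $c_1,c_2$ with every mapping spectrum equal to $S$ and all compositions the canonical identifications, so that $c_1\cong c_2$ in $\pi_0\mc{C}$. The functor $G\colon b\mapsto c_1$ is a DK-equivalence over $\mc{A}$. But $\operatorname{Red}(\mc{B})(a,a)=S$ while $\operatorname{Red}(\mc{C})(a,a)=\bigvee_{k,l\in\{1,2\}}\mc{C}(c_k,c_l)\simeq S^{\vee 4}$, and $\operatorname{Red}(G)$ is the inclusion of a single wedge summand --- certainly not a stable equivalence. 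Your suggestion of ``lifting these $\pi_0$-isomorphisms to strict identifications along $G$,'' or of fibrantly replacing $\mc{C}$, cannot repair this: the wedge defining $\operatorname{Red}(\mc{C})(a,a)$ is indexed by the \emph{set} of objects in the fiber, not by their isomorphism classes, and a $\pi_0$-isomorphism $G(b)\simeq c_2$ does not make the summand $\mc{C}(c_2,c_2)$ redundant in the wedge. The dual failure occurs when $G$ collapses objects: if instead $\mc{B}$ has two objects (all mapping spectra $S$) and $\mc{C}$ one, then $\operatorname{Red}(G)$ becomes the fold map $S^{\vee 4}\to S$. The first factor in your decomposition is fine; it is the second factor that is genuinely wrong, not merely unproved.
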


\begin{rmk}
It is clear that $\operatorname{Red}$ preserves, but does not reflect, DK-equivalences. 
\end{rmk}

\section{The Model Structure on $\cat^{\Sp}_{\mc{O}}$}

The category of spectral categories with fixed objects is equipped with a model structure (Schwede-Shipley) \cite{schwede_shipley_equivalences}. Similar results are obtained by Dundas in \cite{dundas_V_categories}.  The details of the constructions, as well as some elaborations, will be needed later. Thus, we will re-do the construction of that model structure, and show how it arises from standard monadic results collected in \cite{EKMM}. We need some preliminary definitions. 

\begin{defn}
Let $\mc{V}$ be a symmetric monoidal category with unit $\mbf{I}$ and let $\mc{O}$ be any set.  The \textbf{category of $\mc{V}$-graphs} denoted $\mbf{Gr}^{\mc{V}}_{\mc{O}}$
\[
\mbf{Gr}^{\mc{V}}_{\mc{O}} = \prod_{\mc{O} \times \mc{O}} \mc{V}. 
\]
\end{defn}

\begin{rmk}
We think of such product category as a graph having vertices indexed by  $\mc{O}$ and edges indexed by $\mc{O}\times \mc{O}$, each with label from $\mc{V}$. 
\end{rmk}

Roughly,this category is as well behaved as $\mc{V}$:

\begin{prop}
If $\mc{V}$ is complete and cocomplete, then so is $\mbf{Gr}^{\mc{V}}$. If $\mc{V}$ is tensored and cotensored, then so $\mbf{Gr}^{\mc{V}}_{\mc{O}}$. 
\end{prop}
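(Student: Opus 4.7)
The plan is to exploit the definition $\mbf{Gr}^{\mc{V}}_{\mc{O}} = \prod_{\mc{O}\times\mc{O}} \mc{V}$ and the general fact that in a product category all the relevant structures are computed componentwise (indexwise over $\mc{O}\times\mc{O}$).

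First I would establish (co)completeness. Given a small diagram $D : J \to \mbf{Gr}^{\mc{V}}_{\mc{O}}$, the universal property of the product identifies $D$ with a family $\{D_{(a,b)} : J \to \mc{V}\}_{(a,b) \in \mc{O}\times\mc{O}}$ of diagrams in $\mc{V}$. Since $\mc{V}$ is assumed to be complete and cocomplete, one can form $L_{(a,b)} := \lim_J D_{(a,b)}$ and $C_{(a,b)} := \colim_J D_{(a,b)}$ in $\mc{V}$. Packaging these into a single object of the product category gives $L, C \in \mbf{Gr}^{\mc{V}}_{\mc{O}}$, and the universal properties follow componentwise from those of the $L_{(a,b)}$ and $C_{(a,b)}$, because morphisms in the product are just tuples of morphisms with no compatibility conditions between different indices $(a,b)$.

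Next I would handle tensoring and cotensoring. Assuming $\mc{V}$ is tensored and cotensored over some (fixed) category $\mc{W}$, for $w \in \mc{W}$ and $G \in \mbf{Gr}^{\mc{V}}_{\mc{O}}$ define $(w \otimes G)_{(a,b)} := w \otimes G_{(a,b)}$ and $(G^w)_{(a,b)} := (G_{(a,b)})^w$. The required natural isomorphism
\[
\hom_{\mbf{Gr}^{\mc{V}}_{\mc{O}}}(w \otimes G, H) \;\cong\; \hom_{\mc{W}}\bigl(w, \hom_{\mbf{Gr}^{\mc{V}}_{\mc{O}}}(G, H)\bigr) \;\cong\; \hom_{\mbf{Gr}^{\mc{V}}_{\mc{O}}}(G, H^w)
\]
is obtained by applying the hom-set description of the product category (a product of hom-sets of $\mc{V}$) and using the tensor-cotensor adjunctions in $\mc{V}$ in each coordinate $(a,b)$ separately.

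There is essentially no obstacle here: the proposition is the general statement that any product of categories inherits limits, colimits, tensors and cotensors pointwise, applied to the specific product defining $\mbf{Gr}^{\mc{V}}_{\mc{O}}$. The only point requiring mild care is being explicit about the indexing category over which $\mc{V}$ is tensored/cotensored, but since the enrichment structure on the product is simply the indexwise one, the verifications reduce to the corresponding statements in $\mc{V}$.
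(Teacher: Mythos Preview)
Your argument is correct and is exactly the approach the paper takes: the paper's proof is the single sentence ``All constructions in $\mbf{Gr}^{\mc{V}}_{\mc{O}}$ are defined pointwise,'' and your proposal simply unpacks that sentence in detail. There is no substantive difference.
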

\begin{proof}
All constructions in $\mbf{Gr}^{\mc{V}}_{\mc{O}}$ are defined pointwise. 
\end{proof}

If $\mc{V}$ is equipped with a cofibrantly generated model structure with (acyclic) generating cofibrations $I$ (resp. $J$), then $\mbf{Gr}^{\mc{V}}_{\mc{O}}$ has such a structure as well.

\begin{prop}[\cite{schwede_shipley_equivalences}, p.325]
Let $\mc{V}$ be a cofibrantly generated model category with generating cofibrations $I$ and generating acyclic cofibrations $J$. Then $\mbf{Gr}^{\mc{V}}_{\mc{O}}$ is a cofibrantly generated model category with
\begin{enumerate}
\item generating cofibrations $X_{i,j} \to Y_{i, j}$ where $X_{i,j}$ is the graph where the only non-trivial label is between $i$ and $j$, and $X \to Y$ is a generating cofibration in $\mc{V}$
\item generating acyclic cofibrations exactly as above, but with $X \to Y$ in $J$
\item weak equivalences defined point-wise. 
\end{enumerate}
\end{prop}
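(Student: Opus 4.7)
The plan is to recognize $\mbf{Gr}^{\mc{V}}_{\mc{O}} = \prod_{\mc{O}\times\mc{O}} \mc{V}$ as a product model category and then invoke Kan's recognition theorem for cofibrantly generated model structures (e.g.\ Hirschhorn 11.3.1 or Hovey 2.1.19). The preceding proposition already tells us that limits, colimits, tensors, and cotensors in $\mbf{Gr}^{\mc{V}}_{\mc{O}}$ are computed coordinatewise, and we will use this repeatedly: everything reduces to a pointwise statement about $\mc{V}$.

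First, I would verify the hypotheses of Kan's recognition theorem. The class $W$ of pointwise weak equivalences inherits the two-out-of-three property and closure under retracts from the corresponding properties in $\mc{V}$, since retracts and three-term diagrams in a product are determined componentwise. For the smallness condition, I would observe that if a domain $X$ of a generating (acyclic) cofibration in $\mc{V}$ is small relative to $I$-cell (resp.\ $J$-cell) complexes, then the graph $X_{i,j}$ is small in $\mbf{Gr}^{\mc{V}}_{\mc{O}}$ relative to the analogous cell complexes, because a map out of $X_{i,j}$ into a transfinite composition only sees the $(i,j)$-component, which in turn is a transfinite composition in $\mc{V}$.

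Next, I would identify the weak equivalences with maps that are simultaneously pointwise weak equivalences in $\mc{V}$, and show that a map in $\mbf{Gr}^{\mc{V}}_{\mc{O}}$ has the right lifting property with respect to $I$ (resp.\ $J$) at the graph level if and only if each of its components has the right lifting property with respect to $I$ (resp.\ $J$) in $\mc{V}$. This is because the only nontrivial slot in $X_{i,j}\to Y_{i,j}$ is at coordinate $(i,j)$, so such a lifting problem is exactly a coordinatewise lifting problem in $\mc{V}$. Consequently, $I$-injectives in $\mbf{Gr}^{\mc{V}}_{\mc{O}}$ are exactly pointwise acyclic fibrations and $J$-injectives are exactly pointwise fibrations. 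From this one deduces the two remaining hypotheses of Kan's theorem: the $J$-cell complexes are $I$-cofibrations (pointwise) and weak equivalences (pointwise), and the $I$-injectives coincide with the $J$-injectives that are also weak equivalences. Both statements follow coordinatewise from the corresponding facts in $\mc{V}$, again using that pushouts and transfinite compositions in $\mbf{Gr}^{\mc{V}}_{\mc{O}}$ are pointwise.

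The main obstacle, such as it is, is purely bookkeeping: one must keep track of the fact that $X_{i,j}\to Y_{i,j}$ is concentrated at a single edge, so that both the small object argument and the identification of the various lifting properties genuinely split as a product over $\mc{O}\times\mc{O}$. Once this observation is made, the resulting model structure is visibly the product model structure on $\prod_{\mc{O}\times\mc{O}}\mc{V}$, with the weak equivalences, fibrations, and cofibrations all pointwise, and Kan's recognition theorem packages the result into a cofibrantly generated model structure with the stated generators.
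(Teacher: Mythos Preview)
Your argument is correct. The paper, however, does not supply its own proof of this proposition: it simply records the statement and cites \cite{schwede_shipley_equivalences}, p.~325, so there is nothing to compare against beyond noting that what you wrote is the standard verification that a product $\prod_{\mc{O}\times\mc{O}}\mc{V}$ of copies of a cofibrantly generated model category is again cofibrantly generated with the ``concentrated'' generators. Your reduction of lifting problems against $X_{i,j}\to Y_{i,j}$ to lifting problems at the single coordinate $(i,j)$ is exactly the point, and the rest (smallness, two-out-of-three, $J$-cell $\subseteq I$-cof $\cap W$, $I$-inj $= J$-inj $\cap W$) follows pointwise as you say.
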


A category is in some sense a graph with composition. We may codify such a statement using a monadic description. To this end, we first note that there is extra structure on the category of graphs. 

\begin{defn}[Tensor product on $\mbf{Gr}^{\mc{V}}_{\mc{O}}$]\label{graph_product}
The category of graphs is equipped with a monoidal product (which is, however, not symmetric monoidal). Given $\mc{G}, \mc{H} \in \mbf{Gr}^{\mc{V}}_{\mc{O}}$ we define
\[
(\mc{G} \otimes \mc{H})(i,j) = \bigvee_{k} \mc{G}(i, k) \otimes_{\mc{V}} \mc{G}(k, j)
\]
The unit for the monoidal structure is the category denoted $\mbf{I}_{\mc{O}}$ with
\[
\mbf{I}_{\mc{O}}(i, j) = 
\begin{cases}
 \ast & i \neq j\\
 \mbf{I} & i = j
\end{cases}
\]
\end{defn}

We can now define the ``free category'' monad. We'll use notation that conforms with the notation in \cite{EKMM} for monoids, since an $\mc{V}$-category with fixed objects will end up being a monoid in $\mbf{Gr}^{\mc{V}}_{\mc{O}}$. 

\begin{defn}
For $\mc{G} \in \mbf{Gr}^{\mc{V}}_{\mc{O}}$ we define the \textbf{free category functor} as
\[
\mbf{T}\mc{G} = \bigvee_{n = 0} \mc{G}^{\otimes n}
\]
where we stipulate that $\mc{G}^{0} = \mbf{I}$. We note that this is a monad via a product $\mbf{T} \mbf{T} \to \mbf{T}$ given by concatenation and obvious unit $\mbf{I} \to \mbf{T}$. 
\end{defn}

We may now define $\mc{V}$-categories with fixed objects. 

\begin{defn}
A \textbf{$\mc{V}$-category with fixed objects} is an $\mbf{T}$-algebra in $\mbf{Gr}^{\mc{V}}_{\mc{O}}$. The category of all such will be denoted $\cat^{\mc{V}}_{\mc{O}} := \mbf{Gr}^{\mc{V}}_{\mc{O}}[\mbf{T}]$.  
\end{defn}

We will need some structural results about this category: the existence of limits and colimits, how to compute them, and how to compute tensors if $\mc{V}$ is tensored. We review some theorems from \cite{EKMM}, and show that all structure on $\cat^{\mc{V}}_{\mc{O}}$ follows formally from the fact that it is a category of algebras. For categories of algebras, the following proposition of Hopkins is very useful:

\begin{prop}\cite[II.7.4]{EKMM}\label{monad_cocomplete}
$\mc{C}$ be a cocomplete category and let $\mbf{S}: \mc{C} \to \mc{C}$ be a monad. If $\mbf{S}$ preserves reflexive coequalizers, then the category of algebras $\mc{C}[\mbf{S}]$ is cocomplete. 
\end{prop}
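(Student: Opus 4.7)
The plan is to verify the two basic colimit types from which all small colimits can be built: arbitrary coproducts and coequalizers of parallel pairs. Throughout I would use the forgetful functor $U: \mc{C}[\mbf{S}] \to \mc{C}$ together with its left adjoint $\mbf{F}: X \mapsto \mbf{S}X$, and the crucial structural fact that every $\mbf{S}$-algebra $(A,\alpha)$ arises as a reflexive coequalizer of free algebras
\[
\mbf{F}U\mbf{F}UA \rightrightarrows \mbf{F}UA \to A,
\]
where the two parallel arrows are $\mbf{F}\alpha$ and the counit $\varepsilon_{\mbf{F}UA}$, with common section induced by the unit $\eta_{UA}$.

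The first step is to construct reflexive coequalizers in $\mc{C}[\mbf{S}]$. Given a reflexive pair $f, g: A \rightrightarrows B$ of $\mbf{S}$-algebras, form the coequalizer $q: UB \to C$ of $Uf, Ug$ in the cocomplete category $\mc{C}$. The hypothesis that $\mbf{S}$ preserves reflexive coequalizers implies that both $\mbf{S}q$ and $\mbf{S}^2 q$ are again coequalizers, so the composite $\mbf{S}UB \xrightarrow{\beta} UB \xrightarrow{q} C$ descends uniquely to a structure map $\gamma: \mbf{S}C \to C$, and the second application of the hypothesis (to $\mbf{S}^2$) forces the associativity axiom; unitality is automatic. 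A direct check, again using that $\mbf{S}q$ is epi, shows $(C,\gamma)$ is the coequalizer of $f,g$ in $\mc{C}[\mbf{S}]$. This is the only place the preservation hypothesis is used, and is the main obstacle.

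With reflexive coequalizers in hand, the remaining steps are formal. Since $\mbf{F}$ is a left adjoint it preserves coproducts, so coproducts of free algebras exist and are free on the underlying coproducts in $\mc{C}$. For a family $\{A_i\}$, assembling the canonical resolutions above gives a reflexive pair
\[
\mbf{F}\Bigl(\coprod_i U\mbf{F}UA_i\Bigr) \rightrightarrows \mbf{F}\Bigl(\coprod_i UA_i\Bigr)
\]
in $\mc{C}[\mbf{S}]$, whose coequalizer, produced by the first step, serves as $\coprod_i A_i$; the universal property follows because any compatible collection of algebra maps $A_i \to D$ lifts uniquely through the resolutions. Arbitrary coequalizers then come from the standard trick: replace a pair $f, g: A \rightrightarrows B$ by the reflexive pair $[f,\id_B], [g,\id_B]: A \amalg B \rightrightarrows B$ with common section $B \hookrightarrow A \amalg B$, and take its coequalizer.

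Finally, any category with all small coproducts and coequalizers is cocomplete via the usual coequalizer-of-coproducts presentation, so $\mc{C}[\mbf{S}]$ is cocomplete as claimed. The real content of the argument is entirely concentrated in the descent of the structure map through a reflexive coequalizer; once that is established, coproducts and general coequalizers are produced by a mechanical application of the adjunction $\mbf{F} \dashv U$.
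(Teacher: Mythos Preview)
Your argument is correct and is essentially Linton's classical proof: first lift reflexive coequalizers along $U$ using the preservation hypothesis, then bootstrap to coproducts via the bar resolution of each algebra, then reduce general coequalizers to reflexive ones via the $A\amalg B \rightrightarrows B$ trick. The paper, following Hopkins, takes a more compressed route: given an arbitrary diagram $D:I\to\mc{C}[\mbf{S}]$, it writes down the colimit in one stroke as the reflexive coequalizer
\[
\mbf{S}\bigl(\colim_I \mbf{S}D(i)\bigr)\ \rightrightarrows\ \mbf{S}\bigl(\colim_I D(i)\bigr)\ \to\ X
\]
taken in $\mc{C}$, with parallel arrows coming from the structure maps $\mbf{S}D(i)\to D(i)$ and from the multiplication $\mu$, and common section from the unit. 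Both arguments hinge on the identical key point, namely that $U$ creates reflexive coequalizers when $\mbf{S}$ preserves them; the paper's version simply packages the coproduct and coequalizer steps into a single coequalizer of free algebras, yielding an explicit formula for arbitrary colimits rather than an existence proof assembled from pieces. Your decomposition is longer but perhaps more transparent about where each hypothesis is used.
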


\begin{rmk}
We recall that a reflexive coequalizer is a coequalizer diagram
\[
\xymatrix{
A \ar@<.5ex>[r]^f \ar@<-.5ex>[r]_g & B \ar[r] & C 
}
\]
such that there is a section $s: B \to A$ with $f \circ s = \id$ and $g \circ s = \id$. 
\end{rmk}

The proof is that the colimits in $\mc{C}[\mbf{S}]$ can be computed as a coequlizer. Let $D: I\to \mc{C}[\mbf{S}]$ be a diagram then, $X: = \colim_I D(i)$ is computed the coequalizer 
\[
\xymatrix{
\mbf{S} (\colim_I \mbf{S} D(i)) \ar@<.5ex>[r]\ar@<-.5ex>[r] & \mbf{S} (\colim_I D(i))\ar[r]  & X
}
\]
inside of $\mc{C}$. The authors of \cite{EKMM} also prove the following

\begin{prop}\cite[II.7.2.]{EKMM}
Let $\mc{C}$ be a cocomplete closed weak symmetric monoidal category. Then the monad $\mbf{T}$ in $\mc{C}$ preserves reflexive coequalizers. 
\end{prop}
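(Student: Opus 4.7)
The plan is to reduce everything to the statement that the bifunctor $(A, B) \mapsto A \otimes B$ takes a single reflexive coequalizer (in each slot) to a coequalizer, and then to prove this using closedness together with the splitting provided by reflexivity.

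First, since $\mbf{T}A = \bigvee_{n \geq 0} A^{\otimes n}$ and coproducts commute with all colimits, in particular with coequalizers, it suffices to show that each functor $A \mapsto A^{\otimes n}$ preserves reflexive coequalizers. Using associativity of $\otimes$ and an easy induction, this reduces to the case $n = 2$.

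So, given a reflexive pair $f, g : X \to Y$ with common section $s : Y \to X$ and coequalizer $p : Y \to Z$, I want to show that $f \otimes f,\ g \otimes g : X \otimes X \to Y \otimes Y$ has coequalizer $Z \otimes Z$. I would form the $3 \times 3$ diagram obtained by tensoring $X \rightrightarrows Y \to Z$ with itself in both variables. Since $\mc{C}$ is closed, tensoring with a fixed object preserves all colimits, so every row and every column of this $3 \times 3$ diagram is a coequalizer. Iterating (take horizontal coequalizers first, then vertical) identifies $Z \otimes Z$ as the iterated coequalizer of the full grid.

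The crux, and the step where reflexivity is really used, is identifying this iterated coequalizer with the diagonal coequalizer of $f \otimes f$ and $g \otimes g$. Given any $h : Y \otimes Y \to W$ coequalizing the diagonal pair, one must show that $h$ also coequalizes the off-diagonal pairs $(f \otimes \id_Y,\ g \otimes \id_Y)$ and $(\id_Y \otimes f,\ \id_Y \otimes g)$; only then do the universal properties of the row and column coequalizers produce a (necessarily unique) factorization $Z \otimes Z \to W$. This off-diagonal compatibility is exactly where one inserts the section and uses $fs = gs = \id_Y$ to rewrite, for instance, $f \otimes \id_Y = (f \otimes f)(\id_X \otimes s)$ and $g \otimes \id_Y = (g \otimes g)(\id_X \otimes s)$, after which the hypothesis on $h$ closes the argument. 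Without the common section this step fails, and that is the main technical hurdle.
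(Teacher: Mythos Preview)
Your proposal is correct and is precisely the standard argument from \cite[II.7.2]{EKMM} that the paper is citing; the paper gives no independent proof here, but its surrounding remarks (that one only needs $(-)\otimes\mc{G}$ to preserve colimits and that symmetry is never actually invoked) are exactly the observations your $3\times 3$ argument makes explicit. One small point of phrasing: your ``reduction to $n=2$'' is really the two-variable lemma (if each of two reflexive pairs has a section and $\otimes$ preserves coequalizers in each slot, then the tensor of the two coequalizers is the diagonal coequalizer), which you then feed back into the induction with $X^{\otimes n}\rightrightarrows Y^{\otimes n}\to Z^{\otimes n}$ in one slot; this is fine, but worth stating as the general bifunctor statement rather than the special case $X=X'$.
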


This allows them to prove structural results about the category of monoids in $\mc{C}$. In our case, however, $\cat^{\mc{V}}_{\mc{O}}$ is not \textit{symmetric} monoidal, just monoidal. Nor have we proven that $\cat^{\mc{V}}_{\mc{O}}$ is closed. Luckily, it turns out than an examination of the proof of the proposition reveals that neither of these conditions are strictly required: it will be enough to show that the functor $- \otimes \mc{G}$ preserves colimits and symmetry is never used in the proof of that fact. 

\begin{lem}
Suppose the tensor product $\otimes_{\mc{V}}$ commutes with colimits. Let $\mc{G} \in \mbf{Gr}^{\mc{V}}_{\mc{O}}$. Then $(-) \otimes \mc{G}: \mbf{Gr}^{\mc{V}}_{\mc{O}} \to \mbf{Gr}^{\mc{V}}_{\mc{O}}$ preserves colimits. 
\end{lem}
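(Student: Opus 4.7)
The plan is to reduce the claim to the pointwise structure of $\mbf{Gr}^{\mc{V}}_{\mc{O}}$ and then apply two commutation facts in $\mc{V}$. First I would recall that, by the earlier proposition, colimits in $\mbf{Gr}^{\mc{V}}_{\mc{O}}$ are computed pointwise, because this category is a product $\prod_{\mc{O}\times \mc{O}} \mc{V}$. So it suffices, for each fixed pair $(i,j)\in \mc{O}\times \mc{O}$, to show that the functor
\[
\mc{H}\;\longmapsto\; (\mc{H}\otimes \mc{G})(i,j) \;=\; \bigvee_{k\in \mc{O}} \mc{H}(i,k) \otimes_{\mc{V}} \mc{G}(k,j)
\]
from $\mbf{Gr}^{\mc{V}}_{\mc{O}}$ to $\mc{V}$ preserves colimits.

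Next I would take a diagram $D:I\to \mbf{Gr}^{\mc{V}}_{\mc{O}}$ with colimit $\mc{H} = \colim_\alpha \mc{H}_\alpha$. By the pointwise computation, $\mc{H}(i,k) = \colim_\alpha \mc{H}_\alpha(i,k)$. Then I unwind the definition of $\otimes$ on $\mbf{Gr}^{\mc{V}}_{\mc{O}}$ from Definition \ref{graph_product}:
\[
\bigl((\colim_\alpha \mc{H}_\alpha)\otimes \mc{G}\bigr)(i,j) \;=\; \bigvee_{k\in\mc{O}} \bigl(\colim_\alpha \mc{H}_\alpha(i,k)\bigr) \otimes_{\mc{V}} \mc{G}(k,j).
\]
The hypothesis that $\otimes_{\mc{V}}$ commutes with colimits lets me pull $\colim_\alpha$ out of the tensor on the left variable, giving
\[
\bigvee_{k\in\mc{O}} \colim_\alpha \bigl(\mc{H}_\alpha(i,k)\otimes_{\mc{V}} \mc{G}(k,j)\bigr).
\]
Finally, coproducts commute with colimits (being themselves a special kind of colimit), so I may swap the wedge and the colimit to obtain
\[
\colim_\alpha \bigvee_{k\in \mc{O}} \bigl(\mc{H}_\alpha(i,k)\otimes_{\mc{V}} \mc{G}(k,j)\bigr) \;=\; \colim_\alpha \bigl(\mc{H}_\alpha \otimes \mc{G}\bigr)(i,j).
\]
One then checks that the canonical comparison map is the identification just obtained, which establishes the claim.

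There is no real obstacle here: the only thing one should be careful about is that $\otimes_\mc{V}$ is assumed to preserve colimits in the variable of interest (the left one), which is exactly the content of the hypothesis. Symmetry of $\otimes_\mc{V}$ is not needed, matching the remark preceding the lemma that this is the only property of $\mc{V}$ being used.
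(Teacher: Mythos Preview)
Your proof is correct and follows essentially the same approach as the paper: reduce to a pointwise check, expand the graph tensor at $(i,j)$, use that $\otimes_{\mc{V}}$ commutes with colimits to pull the colimit past the tensor, and then commute the wedge with the colimit. The paper's argument is the same chain of identifications, written slightly more tersely.
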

\begin{proof}
Let $\mc{H}: D \to \mbf{Gr}^{\mc{V}}_{\mc{O}}$ be a diagram. Colimits are computed pointwise, so we need only check that this holds pointwise. 
\begin{align*}
\left((\colim_D \mc{H}(d)) \otimes \mc{G}\right) (i, j) &= \bigvee_{k \in \mc{O}} (\colim_D \mc{H}(d))(i,k) \otimes_{\mc{V}} \mc{G} (k, j)\\
&=\bigvee_{k \in \mc{O}} \colim_D (\mc{H}(d)(i,j) \otimes_{\mc{V}} \mc{G}(k, j))\\
&= \colim_D \left( \bigvee_k \mc{H}(d)(i, j) \otimes_{\mc{V}} \mc{G}(k, j)\right)\\
& =\colim_D (\mc{H}(d) \otimes \mc{G})
\end{align*}
\end{proof}

\begin{rmk}
In the only case of interest, when $\mc{V}$ is some well-behaved category of spectra, e.g. \cite{EKMM, hss}, the tensor product will commute with colimits. 
\end{rmk}

\begin{rmk}
Of course, this proves that $(-)\otimes\mc{G}$ is a left adjoint. We will not need the right adjoint, so we create no notation for it. 
\end{rmk}

We now state the version of \cite[II.7.2]{EKMM} that we shall need. 

\begin{prop}
Let the symmetric monoidal product $\mc{V}$ preserve colimits. Then the free category monad $\mbf{T}: \mbf{Gr}^{\mc{V}}_{\mc{O}} \to \mbf{Gr}^{\mc{V}}_{\mc{O}}$ preserves reflexive coequalizers. 
\end{prop}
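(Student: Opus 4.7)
The plan is to reduce to tensor powers and then peel off coequalizers slot by slot. Since $\mbf{T}\mc{G} = \bigvee_{n \geq 0} \mc{G}^{\otimes n}$ and coproducts commute with all colimits, it suffices to show that for each fixed $n \geq 0$ the functor $P_n: \mc{G} \mapsto \mc{G}^{\otimes n}$ preserves reflexive coequalizers. The cases $n = 0, 1$ are immediate. For $n \geq 2$, the preceding lemma shows that tensoring on the right with a fixed graph preserves colimits; the mirror image (swapping the roles of the two variables in the pointwise formula of Definition \ref{graph_product}) gives the same for tensoring on the left. Hence, for any graphs $\mc{X}_1, \ldots, \mc{X}_{n-1}$, the functor slotting $(-)$ into the $j$-th position of $\mc{X}_1 \otimes \cdots \otimes \mc{X}_{n-1}$ preserves colimits.

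Now fix a reflexive coequalizer $A \overset{f, g}{\rightrightarrows} B \overset{p}{\to} C$ with section $s: B \to A$ satisfying $fs = gs = \id_B$. Plainly $p^{\otimes n}$ coequalizes $f^{\otimes n}$ and $g^{\otimes n}$, so the point is universality. Let $h: B^{\otimes n} \to D$ satisfy $h \circ f^{\otimes n} = h \circ g^{\otimes n}$. The key observation is that the section $s$ turns this single ``$n$-slot'' identity into a ``one-slot'' coequalizing identity in each slot separately: precomposing with the map
\[
s \otimes \cdots \otimes s \otimes \id_A \otimes s \otimes \cdots \otimes s : B^{\otimes(i-1)} \otimes A \otimes B^{\otimes(n-i)} \longrightarrow A^{\otimes n},
\]
which is the identity in the $i$-th slot and $s$ in every other slot, and using $fs = gs = \id_B$, yields $h \circ (\id_B \otimes \cdots \otimes f \otimes \cdots \otimes \id_B) = h \circ (\id_B \otimes \cdots \otimes g \otimes \cdots \otimes \id_B)$ with $f, g$ in slot $i$.

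Armed with the one-slot identities, we peel off coequalizers slot by slot. The slot-$n$ identity, together with the fact that $B^{\otimes(n-1)} \otimes (-)$ preserves the reflexive coequalizer, factors $h$ through $\id^{\otimes(n-1)} \otimes p$ as some $h_1: B^{\otimes(n-1)} \otimes C \to D$. To continue, one verifies that $h_1$ coequalizes the reflexive pair in slot $n-1$ by checking after precomposition with the epi $\id^{\otimes(n-2)} \otimes \id_A \otimes p$, which reduces to an already-established one-slot identity for $h$. Iterating $n$ times produces a unique factorization of $h$ through $p^{\otimes n}: B^{\otimes n} \to C^{\otimes n}$, uniqueness at each stage being automatic because $p^{\otimes k} \otimes \id$ is (epi as) a coequalizer.

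The main obstacle is simply the bookkeeping for general $n$; the argument nowhere invokes symmetry of $\otimes$, so the non-symmetric nature of the graph product in Definition \ref{graph_product} causes no difficulty. This is a direct adaptation of \cite[II.7.2]{EKMM}, in fact simplified by the absence of a symmetric-orbit quotient.
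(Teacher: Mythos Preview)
Your proof is correct and is precisely the EKMM argument that the paper defers to; the paper's own proof is simply ``Exactly as in \cite{EKMM}.'' Your expansion is careful and, as you note, makes explicit that symmetry of $\otimes$ is never used, which is the only point the paper actually needs beyond the cited reference.
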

\begin{proof}
Exactly as in \cite{EKMM}. 
\end{proof}

This entitles us to the following. 

\begin{prop}
The $\cat^{\Sp}_{\mc{O}}$ is complete and cocomplete. 
\end{prop}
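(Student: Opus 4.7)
The plan is to treat completeness and cocompleteness separately, since they require quite different arguments. In both cases we exploit the fact that $\cat^{\Sp}_{\mc{O}} = \mbf{Gr}^{\Sp}_{\mc{O}}[\mbf{T}]$ is a category of algebras over a monad on a complete, cocomplete base.

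For completeness, I would first observe that $\mbf{Gr}^{\Sp}_{\mc{O}}$ is complete, since it is the product category $\prod_{\mc{O}\times\mc{O}} \Sp$ and $\Sp$ is complete (all limits being computed pointwise). Then I would invoke the standard fact that for any monad $\mbf{T}$ on a complete category $\mc{C}$, the category of algebras $\mc{C}[\mbf{T}]$ is complete, with limits computed in the underlying category $\mc{C}$ and given the unique algebra structure induced by the universal property: given a diagram $D: I \to \cat^{\Sp}_{\mc{O}}$, form the limit $L = \lim_I U D(i)$ in $\mbf{Gr}^{\Sp}_{\mc{O}}$, and note that the maps $\mbf{T} L \to \mbf{T} U D(i) \to U D(i)$ assemble to a cone, hence factor through a unique map $\mbf{T} L \to L$ making $L$ a $\mbf{T}$-algebra.

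For cocompleteness, the argument is an immediate application of the preceding machinery. I would combine the previous proposition (that $\otimes$ in $\mbf{Gr}^{\Sp}_{\mc{O}}$ commutes with colimits, whence $\mbf{T}$ preserves reflexive coequalizers, using that $\Sp$ is a cocomplete closed monoidal category whose smash product preserves colimits in each variable) with Proposition \ref{monad_cocomplete} (i.e.\ \cite[II.7.4]{EKMM}). This immediately yields that $\mbf{Gr}^{\Sp}_{\mc{O}}[\mbf{T}] = \cat^{\Sp}_{\mc{O}}$ is cocomplete.

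The only nontrivial content here is the reflexive coequalizer preservation property of $\mbf{T}$, but this has already been established in the preceding proposition. The presentation of arbitrary colimits as reflexive coequalizers of free algebra diagrams, which is the heart of Hopkins' argument in \cite[II.7.4]{EKMM}, handles the rest, so I do not expect any genuine obstacle. The only subtlety worth flagging is that $\cat^{\Sp}_{\mc{O}}$ is only monoidal (not symmetric monoidal), but, as noted in the paragraph before the lemma on $(-)\otimes \mc{G}$, symmetry is never used in the reflexive coequalizer argument, so the cited proof goes through verbatim.
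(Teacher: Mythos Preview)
Your proposal is correct and follows essentially the same approach as the paper: cocompleteness via Prop.~\ref{monad_cocomplete} together with the preceding fact that $\mbf{T}$ preserves reflexive coequalizers, and completeness via the forgetful functor to $\mbf{Gr}^{\Sp}_{\mc{O}}$ creating limits. The paper phrases the completeness half more tersely (``the forgetful functor is a right adjoint, so limits are created in $\mbf{Gr}^{\Sp}_{\mc{O}}$''), but this is exactly the standard monadic fact you spell out.
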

\begin{proof}
It is cocomplete by Prop. \ref{monad_cocomplete}. It is complete because there is a forgetful functor down to $\mbf{Gr}^{\Sp}_{\mc{O}}$ which is a right adjoint --- all limits are created in $\mbf{Gr}^{\Sp}_{\mc{O}}$ which is complete. 
\end{proof}

We would like to use this monadic structure to induce model structures on $\cat^{\Sp}_{\mc{O}}$ and we would furthermore like them to be simplicial or topological model structures. 

\subsection{Spectrally Enriched Categories}

For this section we assume that the category of spectra we work in is symmetric spectra \cite{hss}, and we denote the category of all such by $\Sp^{\Sigma}$. Thus, our category of spectral categories will be $\cat^{\Sp^{\Sigma}}_{\mc{O}}$. This category is tensored an cotensored over $\sset$ via prolongation from the $\mc{S}\text{et}_{\Delta,\ast}$-functors 
\[
(-)^K, \ (-) \sma K : \mc{S}\text{et}_{\Delta,\ast} \to \mc{S}\text{et}_{\Delta,\ast}
\]

We use the following theorem.

\begin{thm}\cite[2.3]{schwede_shipley_monoidal}
Let $\mc{C}$ be a cofibrantly generated model category with generating (acyclic) cofibrations $I$ (resp. $J$). Suppose that the underlying functor of $\mbf{T}$ commutes with filtered colimits. Suppose further that
\begin{itemize}
\item[(*)] Every relative $\mbf{T}J$-cell complex is a weak equivalence
\end{itemize}
Then the category $\mc{C}[\mbf{T}]$ is a cofibrantly generated model category with generating (acyclic) cofibrations $\mbf{T} I$ (resp. $\mbf{T}J$)
\end{thm}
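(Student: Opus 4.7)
The plan is to apply Kan's lifting (``transfer'') theorem for cofibrantly generated model structures to the free-forgetful adjunction
\[
\mbf{T} : \mc{C} \rightleftarrows \mc{C}[\mbf{T}] : U,
\]
declaring a morphism $f$ in $\mc{C}[\mbf{T}]$ to be a weak equivalence (resp.\ fibration) precisely when $Uf$ is one in $\mc{C}$, and positing $\mbf{T}I$, $\mbf{T}J$ as the proposed generating (acyclic) cofibrations.

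First I would unwind the lifting properties through the adjunction $\hom_{\mc{C}[\mbf{T}]}(\mbf{T}X, Y) \cong \hom_{\mc{C}}(X, UY)$: a morphism $p$ in $\mc{C}[\mbf{T}]$ is right orthogonal to $\mbf{T}I$ (resp.\ $\mbf{T}J$) if and only if $Up$ is right orthogonal to $I$ (resp.\ $J$) in $\mc{C}$, i.e.\ iff $Up$ is an acyclic fibration (resp.\ fibration). Thus the identifications $\mbf{T}I\text{-inj} = $ ``acyclic fibrations'' and $\mbf{T}J\text{-inj} = $ ``fibrations'' demanded by Kan's criterion match our definitions automatically.

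Second, I would verify the smallness hypotheses of the small object argument. Because $\mbf{T}$ commutes with filtered colimits, $U$ creates them, so filtered colimits in $\mc{C}[\mbf{T}]$ may be computed underneath in $\mc{C}$. Combined with smallness of the domains of $I$ and $J$ in $\mc{C}$ and the adjunction above, the domains of $\mbf{T}I$ and $\mbf{T}J$ are small relative to their respective cell complexes, and Quillen's small object argument produces the required functorial factorizations.

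Finally, I need every relative $\mbf{T}J$-cell complex to lie in $\mbf{T}I\text{-cof}$ and to be a weak equivalence. The cofibration half is formal: since $\mbf{T}$ is a left adjoint, $J \subseteq I\text{-cof}$ in $\mc{C}$ forces $\mbf{T}J \subseteq \mbf{T}I\text{-cof}$ in $\mc{C}[\mbf{T}]$. The weak-equivalence half is exactly the hypothesis $(*)$, and 2-out-of-3 and retract closure of the three classes transport along $U$ since $U$ both preserves and reflects them. The one genuinely non-formal input is therefore $(*)$; within the statement of the theorem it is assumed outright, so Kan's theorem delivers the lifted model structure. The main obstacle---in any downstream application, in particular for $\mbf{T}$ the free-category monad on $\mbf{Gr}^{\Sp^{\Sigma}}_{\mc{O}}$---is verifying $(*)$, typically by filtering pushouts of the form $\mc{B} \sqcup_{\mbf{T}A} \mbf{T}A'$ according to ``word length'' in $\mbf{T}$ and checking that each attaching cofiber is a level equivalence, a step that ultimately rests on monoidal (pushout-product or monoid-axiom) properties of $\mc{V}$.
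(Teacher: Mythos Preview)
Your proposal is correct and follows the standard transfer argument, which is precisely the route taken in the cited reference \cite{schwede_shipley_monoidal}; note that the present paper does not give its own proof of this statement but simply quotes it from Schwede--Shipley and then invokes \cite[6.3]{schwede_shipley_equivalences} to verify $(*)$ pointwise in the application to $\cat^{\Sp}_{\mc{O}}$. Your closing remark about filtering pushouts by word length is exactly the mechanism Schwede--Shipley use to establish $(*)$ under the monoid axiom, so you have also identified the content of the paper's subsequent paragraph.
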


In \cite{schwede_shipley_monoidal} the hardest part in inducing a model structure on $\mc{C}[\mbf{T}]$ is showing that ($\ast$) holds. It is noted in \cite[6.3]{schwede_shipley_equivalences} that the proof of the monoid axiom can be done pointwise, so $\mc{C}\text{at}^{\Sp}_{\mc{O}}$ does in fact inherit a model structure. 

We have to prove that $\cat^{\Sp}_{\mc{O}}$ is a simplicial model category. Recall the definition:

\begin{defn}[Quillen SM7] \label{SM7}
A model category $\mc{C}$ is a \textbf{simplcial model category} if $\mc{C}$ is tensored and cotensored over simplicial sets and for every cofibration $i: A \to B$ in $\mc{C}$ and fibrations $p: X \to Y$ in $\mc{C}$ the map
\[
\xymatrix{
\map(B, X) \ar[r] & \map(A, X) \times_{\map(A, Y)} \map(B, Y)
}
\]
is a fibration and is trivial if either $i$ or $p$ is a weak equivalence. 
\end{defn}

In order to prove that $\cat^{\Sp^{\Sigma}}_{\mc{O}}$ is a simplicial model category let us first prove that  $\mbf{Gr}^{\Sp^{\Sigma}}_{\mc{O}}$ is. 

\begin{prop}
$\mbf{Gr}^{\Sp^{\Sigma}}_{\mc{O}}$ is a cofibrantly generated simplicial model category. 
\end{prop}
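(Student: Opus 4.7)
The plan is to reduce everything to the corresponding facts for $\Sp^{\Sigma}$, exploiting that $\mbf{Gr}^{\Sp^{\Sigma}}_{\mc{O}} = \prod_{\mc{O}\times\mc{O}} \Sp^{\Sigma}$ is a product of simplicial model categories and that the model structure already established on it has fibrations, cofibrations, and weak equivalences all defined coordinatewise.

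First I would record the simplicial structure. Define the tensor and cotensor pointwise: for $\mc{G} \in \mbf{Gr}^{\Sp^{\Sigma}}_{\mc{O}}$ and $K \in \sset$, set $(\mc{G}\otimes K)(i,j) = \mc{G}(i,j) \sma K_+$ and $(\mc{G}^K)(i,j) = \mc{G}(i,j)^K$, where the right hand sides use the tensor and cotensor that $\Sp^{\Sigma}$ carries. Define the mapping space by $\map(\mc{G},\mc{H}) = \prod_{(i,j)\in \mc{O}\times\mc{O}} \map_{\Sp^{\Sigma}}(\mc{G}(i,j), \mc{H}(i,j))$. The two-variable adjunction $\hom(\mc{G}\otimes K, \mc{H}) \cong \hom_{\sset}(K, \map(\mc{G},\mc{H})) \cong \hom(\mc{G}, \mc{H}^K)$ then follows from the analogous adjunction in $\Sp^{\Sigma}$ applied in each coordinate and combined with the universal property of the product.

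Next I would verify Definition \ref{SM7}. Let $i:\mc{A}\to\mc{B}$ be a cofibration and $p:\mc{X}\to\mc{Y}$ a fibration in $\mbf{Gr}^{\Sp^{\Sigma}}_{\mc{O}}$. We must show that
\[
\map(\mc{B},\mc{X}) \longrightarrow \map(\mc{A},\mc{X}) \times_{\map(\mc{A},\mc{Y})} \map(\mc{B},\mc{Y})
\]
is a fibration in $\sset$, acyclic when $i$ or $p$ is. Since mapping spaces, limits, fibrations, and trivial fibrations all commute with products, this map is the product over $(i,j)\in\mc{O}\times\mc{O}$ of the analogous maps
\[
\map_{\Sp^{\Sigma}}(\mc{B}(i,j),\mc{X}(i,j)) \to \map_{\Sp^{\Sigma}}(\mc{A}(i,j),\mc{X}(i,j)) \times_{\map_{\Sp^{\Sigma}}(\mc{A}(i,j),\mc{Y}(i,j))} \map_{\Sp^{\Sigma}}(\mc{B}(i,j),\mc{Y}(i,j)).
\]
Because $i$ and $p$ are coordinatewise a cofibration and fibration in $\Sp^{\Sigma}$, and $\Sp^{\Sigma}$ is a simplicial model category, each factor is a fibration of simplicial sets, and is acyclic whenever the corresponding factor of $i$ or $p$ is acyclic. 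A product of (acyclic) fibrations is an (acyclic) fibration, yielding SM7.

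There is no real obstacle here; the only thing to be alert to is consistency with the pointed/unpointed conventions (the $K_+$ in the tensor), so that the adjunctions with $\map$ and cotensor line up with those in $\Sp^{\Sigma}$. Cofibrant generation was already recorded in the cited Schwede--Shipley proposition, so combined with the simplicial structure above we conclude that $\mbf{Gr}^{\Sp^{\Sigma}}_{\mc{O}}$ is a cofibrantly generated simplicial model category.
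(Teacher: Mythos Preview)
Your proof is correct and ultimately rests on the same fact as the paper's---SM7 for $\Sp^{\Sigma}$---but the reduction is organized differently. The paper checks SM7 only against the \emph{generating} cofibrations $A_{ij}\to B_{ij}$, which are concentrated in a single coordinate; then $\map_{\mbf{Gr}}(A_{ij},\mc{X}) \cong \map_{\Sp^{\Sigma}}(A,\mc{X}(i,j))$ collapses the pullback map to a single instance of SM7 in $\Sp^{\Sigma}$. You instead treat an arbitrary cofibration and invoke that cofibrations, fibrations, and weak equivalences in $\prod_{\mc{O}\times\mc{O}}\Sp^{\Sigma}$ are all coordinatewise, so the SM7 map is a product of SM7 maps in $\Sp^{\Sigma}$. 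The paper's route avoids having to know that arbitrary cofibrations in the product model structure are pointwise cofibrations (true, but not recorded in the paper), while your route avoids the reduction-to-generators step; neither approach gains much over the other.
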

\begin{proof}
It suffices to show that the maps under consideration are fibrations when $i: A \to B$ is a generating cofibration and trivial fibrations when $i: A \to B$ is a generating acyclic cofibration. However, the generating cofibrations have the form $A_{ij} \to B_{ij}$ and we note that since the maps are defined pointwise we have
\[
\map_{\mbf{Gr}} (A_{ij} , X) = \map_{\Sp^{\Sigma}} (A, X_{ij}).
\]
The result then follows from the fact that $\Sp^{\Sigma}$ satisfies Quillen's SM7. 
\end{proof}

We now need to construct tensors and cotensors of $\cat^{\Sp^\Sigma}_{\mc{O}}$ by $\sset$. This is done in the standard monadic way, which we will review. The author learned this material from \cite{EKMM}, in particular in the proof of \cite[VII.2.10]{EKMM}. In order to properly define the tensors and cotensors in a category of algebras, we need to define one map which will be useful. Also, for as long as possible we will work with general monads and (co)tensored categories so as to expose what part of the arguments are formal. We maintain the notation of \cite{EKMM} except for the fact that we use $\mbf{M}$ for a general monad instead of $\mbf{T}$, so as to avoid confusion with the free category monad above. 

\begin{defn}\cite[VII.2.10]{EKMM}
  Let $\mc{C}$ be a category tensored over simplicial sets. Let
  \[a: X \to \mc{C}(C, C \otimes X)\]
  be adjoint to the identity $C \otimes X \to C \otimes X$. 

We define the map $\nu: \mbf{M} C \otimes X \to \mbf{M}(C \otimes X)$ to be adjoint to 
\[
\xymatrix{
X \ar[r]^<<<<<{a} & \mc{C}(C, C \otimes X) \ar[r]^<<<<<{\mbf{M}} & \mc{C}(\mbf{M} C, \mbf{M}(C \otimes X))
}
\]
\end{defn}

Let $C, C'$ be $\mbf{M}$-algebras with structure maps $\xi: \mbf{M}C \to C$ and $\xi': \mbf{M}C' \to C'$. In category $\mc{C}$ with equalizers, we can compute the mapping spaces as the equalizer displayed below
\[
\xymatrix{
\mc{C}[\mbf{M}] (C, C') \ar@{.>}[r] & \mc{C}(C,C') \ar@/_/[rr]\ar@<.5ex>[r] & \mc{C}(\mbf{M}C, \mbf{M} C') \ar@<.5ex>[r] & \mc{C} (\mbf{M} C, C')
}
\]

We can also define the tensor $C \otimes X$ in $\mc{C}[\mbf{M}]$ (as opposed to just $\mc{C}$) to be the displayed coequalizer
\[
\xymatrix{
\mbf{M}(\mbf{M} C \otimes X) \ar@<.5ex>[r]^{\mbf{M}(\xi \otimes \id)} \ar@<-.5ex>[r]_{\mu \circ \mbf{M} \nu} & \mbf{M}(C \otimes X) \ar@{.>}[r] & C \otimes X
}
\]

We note that since $\cat^{\Sp^{\Sigma}}_{\mc{O}}$ is complete and cocomplete, and thus has equalizers and coequalizers, we immediately obtain the following. 

\begin{lem}
$\cat^{\Sp^{\Sigma}}_{\mc{O}}$ is tensored and cotensored over $\mc{S}\text{et}_{\Delta}$ and $\mc{S}\text{et}_{\Delta,\ast}$. Furthemore, it is enriched over $\mc{S}\text{et}_{\Delta}$. 
\end{lem}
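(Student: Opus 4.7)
The plan is to apply the general monadic framework for (co)tensoring and enrichment of algebras over a monad from \cite[VII.2.10]{EKMM}, which is essentially already set up in the preceding paragraphs. All prerequisites are in hand: $\cat^{\Sp^{\Sigma}}_{\mc{O}} = \mbf{Gr}^{\Sp^{\Sigma}}_{\mc{O}}[\mbf{T}]$ is complete and cocomplete by the proposition just proved, and $\mbf{Gr}^{\Sp^{\Sigma}}_{\mc{O}}$ is itself tensored and cotensored over $\sset$ by prolongation from the basepoint-preserving tensoring of $\Sp^{\Sigma}$. The tensor $\mc{C} \otimes X$ and mapping space $\map(\mc{C}, \mc{C}')$ are defined by the coequalizer and equalizer displayed in the text, and both exist because of the completeness and cocompleteness just cited.

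For the fundamental adjunction
$$
\cat^{\Sp^{\Sigma}}_{\mc{O}}(\mc{C} \otimes X, \mc{C}') \cong \sset\!\left(X, \map(\mc{C}, \mc{C}')\right),
$$
I would chase universal properties. A map out of the coequalizer $\mc{C} \otimes X$ in $\cat^{\Sp^{\Sigma}}_{\mc{O}}$ corresponds by the free-forgetful adjunction for $\mbf{T}$-algebras to a map of graphs $\mc{C} \otimes X \to \mc{C}'$ satisfying the compatibility imposed by the coequalizer; transposing across the $\sset$-tensoring of $\mbf{Gr}^{\Sp^{\Sigma}}_{\mc{O}}$ turns this into a simplicial map $X \to \mbf{Gr}^{\Sp^{\Sigma}}_{\mc{O}}(\mc{C}, \mc{C}')$, and the coequalizer compatibility translates under transposition precisely into the condition that the image lands in the equalizer defining $\map(\mc{C}, \mc{C}')$. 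The cotensor $\mc{C}^X$ is then defined as the underlying cotensor in $\mbf{Gr}^{\Sp^{\Sigma}}_{\mc{O}}$ equipped with the $\mbf{T}$-algebra structure adjoint to
$$
\mbf{T}(\mc{C}^X) \otimes X \xrightarrow{\nu} \mbf{T}(\mc{C}^X \otimes X) \xrightarrow{\mbf{T}(\text{ev})} \mbf{T}\mc{C} \xrightarrow{\xi} \mc{C},
$$
and a dual universal-property chase yields the cotensor adjunction. The pointed variant over $\sset_*$ requires no new construction since $\Sp^{\Sigma}$ (and hence $\mbf{Gr}^{\Sp^{\Sigma}}_{\mc{O}}$) is already pointed-tensored; the same coequalizer and equalizer formulas go through verbatim.

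The simplicial enrichment is then extracted from the mapping space bifunctor: composition is inherited from the composition in $\mbf{Gr}^{\Sp^{\Sigma}}_{\mc{O}}$ via the equalizer's universal property, and the unit and associativity are formal consequences. The one point requiring care — and the likeliest source of friction — is the definition and naturality of the map $\nu$, since the monoidal structure on $\mbf{Gr}^{\Sp^{\Sigma}}_{\mc{O}}$ is \emph{not} symmetric. However, $\nu$ is built from the unit $a$ and functoriality of $\mbf{T}$ alone, so symmetry is never invoked; this is the observation that allowed the EKMM argument for preservation of reflexive coequalizers to go through in our setting, and exactly the same observation lets the standard $\sset$-enrichment construction run here without modification.
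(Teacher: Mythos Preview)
Your proposal is correct and takes essentially the same approach as the paper: both invoke the monadic framework of \cite[VII.2.10]{EKMM} laid out in the preceding paragraphs, using completeness and cocompleteness of $\cat^{\Sp^{\Sigma}}_{\mc{O}}$ to form the requisite (co)equalizers. The paper's own proof is in fact considerably terser than yours---it simply says the tensors, cotensors, and mapping spaces ``can be constructed as above''---so your additional verification of the adjunctions and your remark on why non-symmetry of the graph product causes no trouble are elaborations rather than a different route.
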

\begin{proof}
$\cat^{\Sp^{\Sigma}}_{\mc{O}}$ is the category of $\mbf{T}$-algebras in $\mbf{Gr}^{\Sp^{\Sigma}}_{\mc{O}}$. It is complete and cocomplete, and so tensors and cotensors can be constructed as above. Similarly, the mapping space can be computed as above. 
\end{proof}

The enrichment is compatible with the model structure:

\begin{thm}
$\mc{C}\text{at}^{\Sp^{\Sigma}}_{\mc{O}}$ is a simplicial cofibrantly generated model category. 
\end{thm}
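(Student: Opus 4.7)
The plan is to verify Quillen's pushout-product axiom (SM7) by reducing it to the analogous statement for $\mbf{Gr}^{\Sp^{\Sigma}}_{\mc{O}}$, which has already been established. All the other data of a cofibrantly generated simplicial model category (the model structure, the (co)tensoring, the enrichment) have been set up in the preceding lemmas, so only the compatibility condition remains.

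First I would observe that it suffices to check SM7 when $i$ is a generating cofibration and when it is a generating acyclic cofibration. Indeed, the class of morphisms $i$ for which the pullback-corner map $\map(B,X) \to \map(A,X)\times_{\map(A,Y)}\map(B,Y)$ is a fibration (resp.\ trivial fibration) for every fibration (resp.\ fibration) $p$ is closed under cobase change, transfinite composition, and retracts. Since the Schwede--Shipley/cofibrant generation result yields that every cofibration in $\cat^{\Sp^{\Sigma}}_{\mc{O}}$ is a retract of a relative $\mbf{T}I$-cell complex, the SM7 verification reduces to generating cofibrations of the form $\mbf{T}A \to \mbf{T}B$ with $A\to B \in I$, and likewise for $J$.

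The key calculation is then an enriched adjunction: for any $\mbf{T}$-algebra $X$ and graph $A \in \mbf{Gr}^{\Sp^{\Sigma}}_{\mc{O}}$,
\[
\map_{\cat^{\Sp^{\Sigma}}_{\mc{O}}}(\mbf{T}A,\, X) \;\cong\; \map_{\mbf{Gr}^{\Sp^{\Sigma}}_{\mc{O}}}(A,\, UX),
\]
where $U$ denotes the forgetful functor to graphs. To produce this, I would use that both sides are computed by the equalizer/hom constructions of the previous lemma and unwind the coequalizer definition of the tensor $\mbf{T}A \otimes K$ in $\cat^{\Sp^{\Sigma}}_{\mc{O}}$. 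Because $\mbf{T}$ is a left adjoint on the underlying category, the coequalizer defining $\mbf{T}A \otimes K$ collapses to $\mbf{T}(A \otimes K)$, and the claimed adjunction follows from the ordinary $\mbf{T} \dashv U$ adjunction applied levelwise in simplicial degree. This is the one genuine obstacle: the tensor in the algebra category is not computed pointwise, so it requires carefully tracking through the coequalizer and the map $\nu$.

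Granting this, the SM7 pullback-corner map for $\mbf{T}A \to \mbf{T}B$ against a fibration $p: X \to Y$ in $\cat^{\Sp^{\Sigma}}_{\mc{O}}$ is identified with the SM7 pullback-corner map for $A \to B$ against $Up: UX \to UY$ in $\mbf{Gr}^{\Sp^{\Sigma}}_{\mc{O}}$. Since fibrations and weak equivalences in $\cat^{\Sp^{\Sigma}}_{\mc{O}}$ are by definition created in $\mbf{Gr}^{\Sp^{\Sigma}}_{\mc{O}}$, the map $Up$ is again a fibration (respectively a trivial fibration when $p$ is), and we already know that $\mbf{Gr}^{\Sp^{\Sigma}}_{\mc{O}}$ satisfies SM7 by the preceding proposition. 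This concludes the verification.
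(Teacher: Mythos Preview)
Your proposal is correct and follows essentially the same route as the paper: reduce SM7 to generating (acyclic) cofibrations $\mbf{T}A \to \mbf{T}B$, invoke the enriched free--forgetful adjunction $\map_{\cat^{\Sp^{\Sigma}}_{\mc{O}}}(\mbf{T}A, X) \cong \map_{\mbf{Gr}^{\Sp^{\Sigma}}_{\mc{O}}}(A, UX)$, and then appeal to SM7 for $\mbf{Gr}^{\Sp^{\Sigma}}_{\mc{O}}$. The only difference is that you are more explicit about why the enriched adjunction holds (via the collapse $\mbf{T}A \otimes K \simeq \mbf{T}(A\otimes K)$ in the tensor coequalizer), whereas the paper simply cites it as the free--forgetful adjunction.
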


\begin{proof}
We have to show that for a cofibration $i: \mc{A} \to \mc{B}$ in $\cat^{\Sp^{\Sigma}}_{\mc{O}}$ and and fibration $p: \mc{X} \to \mc{Y}$ of in $\cat^{\Sp^{\Sigma}}_{\mc{O}}$ that the map
\[
\map(\mc{B},\mc{X}) \to \map (\mc{A},\mc{X}) \times_{\map(\mc{A},\mc{X})} \map(\mc{B},\mc{Y})
\]
is a fibration of simplicial sets. It is enough to show this when $i: \mc{A} \to \mc{B}$ is a map of the form $\mbf{T} A_{ij} \to \mbf{T} B_{ij}$ where $A \to B$ is a a cofibration in $\Sp^{\Sigma}$. We thus check the right lifting property with respect to $\Lambda^n_i \to \Delta^n$ with $0 \leq i \leq n$. Since
\[
\map_{\cat^{\Sp^{\Sigma}}_{\mc{O}}} (\mbf{T} X, \mc{X}) \cong \map_{\mbf{Gr}^{\Sp^{\Sigma}}_{\mc{O}}} (X, \mc{X})
\]
by the free-forgetful adjunction, we have the the map above is equivalent to 
\[
\map_{\mbf{Gr}} (B_{ij}, \mc{X}) \to \map_{\mbf{Gr}} (A_{ij}, \mc{X}) \times_{\map_{\mbf{Gr}} (A_{ij}, \mc{X})} \map_{\mbf{Gr}} (B_{ij}, \mc{Y}). 
\]
To show that this has the right lifting property with respect to $\Lambda^n_i \to \Delta^n$, we simply note that $\mbf{Gr}^{\Sp^{\Sigma}}_{\mc{O}}$ has a simplicial model structure. 
\end{proof}

Because it is necessary in the sequel, we note that there is a corresponding notion of non-unital category and non-unital category monad. It will be crucial in what follows. 

\begin{defn}
Let $\mbf{T}_{\text{nu}}: \mbf{Gr} \to \mbf{Gr}$ be the \textbf{non-unital category functor}
\[
\mbf{T}_{\text{nu}} \mc{G} = \bigvee_{n=1} \mc{G}^{\otimes n}
\]
This is a monad with the obvious product $\mbf{T}_{\text{nu}} \mbf{T}_{\text{nu}} \to \mbf{T}_{\text{nu}}$ and unit given by $\mc{G} \to \mbf{T}_{\text{nu}} \mc{G}$, the inclusion of the first term. 
\end{defn}

The same conclusions follow about non-unital categories as for categories. For later use, we introduce some notation. 

\begin{nota}
We let $\cat^{\operatorname{nu}}$ denote the category of small non-unital spectral categories. 
\end{nota}

Since all categories we deal with be algebras of monads, we pause here to record a useful (though trivial) fact.

\begin{lem}\label{split_coequalizer}
Let $X \in \mc{C}[\mbf{M}]$ with structure map $\xi: \mbf{M} X \to X$ then the following is a split coequalizer
\[
\xymatrix{
\mbf{M}\mbf{M} X \ar@<-.5ex>[r]_{\mbf{M}\xi}\ar@<.5ex>[r]^{\mu} & \mbf{M} X \ar[r]_{\xi} & X
}
\]
\end{lem}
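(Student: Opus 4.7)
The plan is to produce an explicit splitting using the unit $\eta: \id \Rightarrow \mbf{M}$ of the monad $\mbf{M}$. Recall that a split coequalizer of a parallel pair $f,g: A \rightrightarrows B$ with coequalizing arrow $h: B \to C$ consists of maps $s: C \to B$ and $t: B \to A$ satisfying $h \circ s = \id_C$, $g \circ t = \id_B$, and $f \circ t = s \circ h$. In our situation the parallel pair is $\mu_X, \mbf{M}\xi : \mbf{M}\mbf{M} X \rightrightarrows \mbf{M} X$ and the candidate coequalizer is $\xi: \mbf{M} X \to X$.

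First I would take $s := \eta_X: X \to \mbf{M}X$ and $t := \eta_{\mbf{M}X}: \mbf{M} X \to \mbf{M}\mbf{M} X$. Then the three split-coequalizer identities break down as follows. The identity $\xi \circ \eta_X = \id_X$ is exactly the unit axiom for the $\mbf{M}$-algebra $(X,\xi)$. The identity $\mu_X \circ \eta_{\mbf{M}X} = \id_{\mbf{M}X}$ is one of the two unit axioms for the monad $\mbf{M}$ applied at the object $\mbf{M}X$. Finally, $\mbf{M}\xi \circ \eta_{\mbf{M}X} = \eta_X \circ \xi$ is precisely the naturality square of $\eta$ applied to the morphism $\xi: \mbf{M}X \to X$.

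To finish, I would also record that $\xi$ actually coequalizes the pair, i.e.\ $\xi \circ \mu_X = \xi \circ \mbf{M}\xi$; this is the associativity axiom for an $\mbf{M}$-algebra. Together with the splitting above, this is the standard criterion, so $(X,\xi)$ is a split coequalizer in $\mc{C}$ (and in particular an absolute coequalizer, preserved by any functor out of $\mc{C}$, which is typically why one records this fact).

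There is no real obstacle here: the statement is the standard ``canonical presentation'' of an algebra over a monad, and the whole verification is a formal diagram chase. The only thing requiring any attention is bookkeeping — making sure one uses $\eta_{\mbf{M}X}$ rather than $\mbf{M}\eta_X$ when defining $t$, so that the naturality square in step three fires correctly and the monad unit axiom in step two applies at the right component.
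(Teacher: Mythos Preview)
Your proof is correct and is exactly the standard argument; the paper itself gives no proof, recording the lemma only as a ``useful (though trivial) fact'' and moving on. There is nothing to compare beyond noting that your explicit choice of splittings $s=\eta_X$, $t=\eta_{\mbf{M}X}$ and the three verifications you list are precisely what any author would have in mind when omitting the proof.
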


The point is that this will allow us to prove theorems for free objects, and then use something like preservation of colimits to prove for general $\mbf{T}$-algebras.

\begin{rmk}

The topological setting is more involved. We would have to verify that the Cofibration Hypothesis is satisfied. We will not be working in settings (for example, equivariant settings) where the full power of the topological context will be needed, so we will not develop this here. 
\end{rmk}

\subsection{Other Categories of Concern}

It is regrettable, but for later use we must define spectral categories with action by another spectral category. It is noted in \cite{schwede_shipley_equivalences} that there are two reasonable notions of what could be called a ``module'' for a spectral category. 

\begin{enumerate}
\item One could either look at category $\mc{M}$ equipped with maps $\mc{A} \otimes \mc{M} \to \mc{M}$ where the product is the one on graphs Defn. \ref{graph_product}. 
\item One could consider functors $F: \mc{A}^{\text{op}} \to \Sp$.
\end{enumerate}

The second case has been extensively studied, and is correspondingly useful. The first case is less well studied. We will call these ``categories with $\mc{A}$-action'' to avoid overloading the term ``module'', which will already have some semantic issues. 

\subsection{Categories with $\mc{A}$-action}

\begin{defn}
Let $\mc{A}$ be a spectral category with fixed object set $|\mc{A}|$. Then a \textbf{category with $\mc{A}$-action} is a spectral category $\mc{M}$ with object set $|\mc{A}|$ equipped with a map
\[
\mc{A} \otimes \mc{M} \to \mc{M}.
\]
\end{defn}

\begin{nota}
We let $\cat^{\mc{A}\text{-act}}_{|\mc{A}|}$ denote the category of such objects --- the morphisms are the functors $\mc{M} \to \mc{N}$ commuting with the action. 
\end{nota}

\begin{nota}
There is also a non-unital version of this, which we will denote with the cumbersome notation $\cat^{\mc{A}\text{-act}, \operatorname{nu}}_{|\mc{A}|}$
\end{nota}

\begin{rmk}
The notion ``category with $\mc{A}$-action'' could have been defined for categories without a fixed object set. However, some fiddling as in the construction in the introduction would have been required. 
\end{rmk}

We can similarly define, and will need categories with a two-sided action. 

\begin{defn}
Let $\mc{A}, \mc{B}$ be spectral categories with fixed object set $\mc{O}$. A \textbf{spectral category with $(\mc{A},\mc{B})$-action} is a spectral category equipped with a map
\[
\mc{A} \otimes \mc{M} \otimes \mc{B} \to \mc{M}
\]
\end{defn}

\begin{rmk}
This is equivalent to defining a category with action by $\mc{A}\otimes \mc{B}^{\text{op}}$. 
\end{rmk}

\begin{defn}
Let $\mbf{A} \mc{M}$ denote $\mc{A} \otimes \mc{M}$. 
\end{defn}

We note that these categories with $\mc{A}$-actions are thus $\mbf{A}$-algebras in the category of $\mbf{T}$-algebras of graphs. Thus, by \cite[II.6.1]{EKMM}, categories with $\mc{A}$-actions are algebras over the composite monad $\mbf{A} \mbf{T}$ which is
\[
\mbf{A} \mbf{T} \mc{G} = \mc{A} \otimes \left( \bigvee_{n=0} \mc{G}^{\otimes n}\right) = \bigvee_{n=0} \mc{A} \otimes \mc{G}^{\otimes n}
\]
where note that $\mc{A} \otimes \mc{G}^{\otimes 0} = \mc{A}$. 

\begin{prop}
The monad $\mbf{A} \mbf{T}$ preserves reflexive coequalizers, as does $\mbf{A}\mbf{T}_{nu}$. 
\end{prop}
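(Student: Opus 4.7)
The plan is to exploit the factorization $\mathbf{A}\mathbf{T}(\mathcal{G}) = \mathcal{A}\otimes \mathbf{T}\mathcal{G}$ and invoke the elementary fact that a composite of two functors each preserving reflexive coequalizers again preserves them. So I would write $\mathbf{A}\mathbf{T} = (\mathcal{A}\otimes -)\circ \mathbf{T}$ as functors $\mathbf{Gr}^{\operatorname{Sp}}_{\mathcal{O}}\to\mathbf{Gr}^{\operatorname{Sp}}_{\mathcal{O}}$, and then verify that each factor has the required preservation property.

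For $\mathbf{T}$ this has already been established by the preceding proposition (the analogue of \cite[II.7.2]{EKMM} for graphs). For $\mathcal{A}\otimes(-)$ I would show the stronger fact that it preserves all colimits. The proof is the pointwise argument used earlier for $(-)\otimes \mathcal{G}$, simply with the roles of the two factors swapped: colimits in $\mathbf{Gr}^{\operatorname{Sp}}_{\mathcal{O}}$ are computed objectwise, the smash product $\otimes_{\operatorname{Sp}}$ commutes with colimits in each variable, and wedges commute with colimits, so for a diagram $\mathcal{H}\colon D\to \mathbf{Gr}^{\operatorname{Sp}}_{\mathcal{O}}$ one has
\[
\bigl(\mathcal{A}\otimes \colim_D \mathcal{H}(d)\bigr)(i,j) = \bigvee_k \mathcal{A}(i,k)\otimes_{\operatorname{Sp}} \bigl(\colim_D \mathcal{H}(d)\bigr)(k,j) \cong \colim_D \bigl(\mathcal{A}\otimes \mathcal{H}(d)\bigr)(i,j).
\]
Combining this with the preservation result for $\mathbf{T}$ yields the claim for $\mathbf{A}\mathbf{T}$.

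For the non-unital version, the only thing to observe is that the previously cited proof that $\mathbf{T}$ preserves reflexive coequalizers is insensitive to whether the wedge runs from $n=0$ or $n=1$: the $n=0$ summand $\mathbf{I}_{\mathcal{O}}$ is a constant functor and therefore automatically preserves any coequalizer, so its omission neither helps nor hurts. Consequently $\mathbf{T}_{\mathrm{nu}}$ preserves reflexive coequalizers by the same argument, and composing once more with the colimit-preserving $\mathcal{A}\otimes(-)$ gives the statement for $\mathbf{A}\mathbf{T}_{\mathrm{nu}}$.

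I do not expect any real obstacle here; the mildest technical point to be careful about is that the monoidal product on graphs is not symmetric, so the colimit-preservation argument must be carried out explicitly for tensoring \emph{on the left} with $\mathcal{A}$ rather than on the right, as was done in the earlier lemma. Since both arguments are purely pointwise, they are formally identical.
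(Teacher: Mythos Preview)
Your proof is correct and follows essentially the same approach as the paper: factor the composite monad, invoke the already-established preservation result for $\mathbf{T}$ (and $\mathbf{T}_{\mathrm{nu}}$), and observe that $\mathbf{A} = \mathcal{A}\otimes(-)$ preserves colimits because the monoidal structure on graphs does so in each variable. Your version is in fact more careful than the paper's, since you explicitly flag the left-versus-right tensoring issue arising from the nonsymmetry of the graph product.
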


\begin{proof}
It is enough to show that $\mbf{A}$ preserves reflexive coequalizers since both $\mbf{T}$ and $\mbf{T}_{\text{nu}}$ do. However, as long as the monoidal structure preserves colimits (which it does in this case) this will hold. 
\end{proof}

\begin{cor}
Spectral categories with $\mc{A}$-action and non-unital spectral categories with $\mc{A}$-action form simplicial and topological model categories. The model structure is determined by that of $\mbf{Gr}^{\text{Sp}^\Sigma}$. 
\end{cor}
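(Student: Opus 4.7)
The plan is to apply the same machinery developed earlier in the section (Schwede--Shipley \cite[2.3]{schwede_shipley_monoidal} plus the standard monadic construction of tensors, cotensors and enrichments) to the composite monads $\mbf{A}\mbf{T}$ and $\mbf{A}\mbf{T}_{\operatorname{nu}}$ on $\mbf{Gr}^{\Sp^\Sigma}_{|\mc{A}|}$. Since the preceding proposition gives us preservation of reflexive coequalizers, the category of algebras $\cat^{\mc{A}\text{-act}}_{|\mc{A}|} = \mbf{Gr}^{\Sp^\Sigma}_{|\mc{A}|}[\mbf{A}\mbf{T}]$ is complete and cocomplete by Prop.~\ref{monad_cocomplete} (limits created through the forgetful functor, colimits by the coequalizer formula), and similarly for the non-unital version.

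To induce the model structure via \cite[2.3]{schwede_shipley_monoidal} one must check that the underlying endofunctor of $\mbf{A}\mbf{T}$ commutes with filtered colimits and that every relative $(\mbf{A}\mbf{T})J$-cell complex is a weak equivalence. The first point is immediate: $\mbf{T}$ is built out of smash powers which preserve filtered colimits pointwise, and $\mbf{A} \otimes (-)$ preserves all colimits by the lemma above, so the composite does as well. For the second point one reduces, as in \cite[6.3]{schwede_shipley_equivalences}, to a pointwise check in $\Sp^\Sigma$; since pushouts and sequential colimits in $\mbf{Gr}^{\Sp^\Sigma}_{|\mc{A}|}[\mbf{A}\mbf{T}]$ are computed as coequalizers in $\mbf{Gr}$ and the weak equivalences and acyclic cofibrations in $\Sp^\Sigma$ satisfy the monoid axiom, the relative cell complexes built from $\mbf{A}\mbf{T} J$ remain pointwise weak equivalences. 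This is the step most likely to require care, but no new input beyond the already-invoked monoid axiom is needed because tensoring on the left with $\mc{A}$ is just a further application of smash products, over which $\Sp^\Sigma$ is already well-behaved.

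For the simplicial and topological enrichment, the argument mimics the proof for $\cat^{\Sp^\Sigma}_{\mc{O}}$ verbatim. The mapping space $\map(\mc{M},\mc{N})$, the tensor $\mc{M}\otimes K$ and the cotensor $\mc{M}^K$ are constructed as the equalizer/coequalizer displayed in \cite[VII.2.10]{EKMM} applied to the monad $\mbf{A}\mbf{T}$; these exist because $\cat^{\mc{A}\text{-act}}_{|\mc{A}|}$ is complete and cocomplete. To check SM7 (Defn.~\ref{SM7}) it suffices, by the usual left-adjoint reduction, to verify the pushout-product axiom for generating (acyclic) cofibrations of the form $\mbf{A}\mbf{T} i$ with $i\colon A_{ij}\to B_{ij}$ a generating (acyclic) cofibration in $\mbf{Gr}^{\Sp^\Sigma}_{|\mc{A}|}$. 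The free-forgetful adjunction $\mbf{A}\mbf{T} \dashv U$ identifies
\[
\map_{\cat^{\mc{A}\text{-act}}_{|\mc{A}|}}(\mbf{A}\mbf{T} X,\mc{N}) \cong \map_{\mbf{Gr}^{\Sp^\Sigma}_{|\mc{A}|}}(X, U\mc{N}),
\]
which reduces SM7 in the algebra category to SM7 in $\mbf{Gr}^{\Sp^\Sigma}_{|\mc{A}|}$, already established.

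The non-unital case is handled identically, with $\mbf{T}$ replaced by $\mbf{T}_{\operatorname{nu}}$; the preceding proposition provides the same reflexive-coequalizer and filtered-colimit properties, and no step in the argument uses the unit. Finally, the statement that the model structure is ``determined by that of $\mbf{Gr}^{\Sp^\Sigma}$'' is just the observation that weak equivalences and fibrations are detected pointwise by the forgetful functor to graphs, which is built into the Schwede--Shipley transfer.
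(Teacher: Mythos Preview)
Your proposal is correct and follows exactly the route the paper intends: the corollary is stated without proof because it is meant to be obtained by rerunning the earlier arguments for $\cat^{\Sp^\Sigma}_{\mc{O}}$ with the monad $\mbf{T}$ replaced by the composite monads $\mbf{A}\mbf{T}$ and $\mbf{A}\mbf{T}_{\operatorname{nu}}$, using the preceding proposition on reflexive coequalizers as the only new input. Your write-up simply spells out those details (transfer via \cite[2.3]{schwede_shipley_monoidal}, pointwise monoid-axiom check, monadic tensors/cotensors, and the SM7 reduction through the free--forgetful adjunction), which is more than the paper itself provides; note only that the ``topological'' clause is not actually developed in the paper (see the remark just before this subsection), so your restriction to the simplicial case is appropriate.
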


\subsection{Modules}

Recall the definition of a module for a spectral category. 

\begin{defn}
Let $\mc{A}$ be a spectral category. A \textbf{module} is a (spectral) functor $F: \mc{A}^{\text{op}} \to \Sp$. The category of all such is denoted $\operatorname{Mod}_{\mc{A}}$. 
\end{defn}

\begin{rmk}
Examining the definition, this means that if $\mc{M}: \mc{A}^{\text{op}} \to \Sp$ is an $\mc{A}$-module, then $\mc{M}(a)$ is a spectrum and for the morphism spectrum $\mc{A}(a,b)$ we have a map $\mc{M}(a) \sma \mc{A}(a,b) \to \mc{M}(b)$. 
\end{rmk}

\begin{rmk}\label{yoneda}
Note that there is a spectral Yoneda embedding $\mc{A} \to \operatorname{Mod}_{\mc{A}}$ given by $a \mapsto \mc{A}(-, a)$. 
\end{rmk}

\begin{thm}[Schwede-Shipley, \cite{schwede_shipley_equivalences, schwede_shipley_stable}]
The category $\operatorname{Mod}_{\mc{A}}$ is a spectral model category where the weak equivalences are level-wise weak equivalences of (symmetric) spectra. 
\end{thm}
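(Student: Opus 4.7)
The plan is to realize $\operatorname{Mod}_{\mc{A}}$ as the category of algebras for a monad on a product category of spectra and then transfer a model structure along the free--forgetful adjunction, exactly parallel to the treatment of $\cat^{\Sp^{\Sigma}}_{\mc{O}}$ above. Let $|\mc{A}|$ denote the object set of $\mc{A}$ and consider the forgetful functor
\[
U: \operatorname{Mod}_{\mc{A}} \to \prod_{a \in |\mc{A}|} \Sp^{\Sigma}, \qquad \mc{M} \mapsto (\mc{M}(a))_{a \in |\mc{A}|},
\]
which has a left adjoint $F$ sending $(X_a)$ to the free module $\bigvee_{a} X_a \sma \mc{A}(-, a)$. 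The product category on the right carries an obvious cofibrantly generated simplicial model structure with generating (acyclic) cofibrations the image of the generators of $\Sp^{\Sigma}$ in each factor.

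Next I would apply the transfer theorem (the same one cited in the excerpt, namely \cite[2.3]{schwede_shipley_monoidal}) to produce the model structure on $\operatorname{Mod}_{\mc{A}}$. Define a map $f:\mc{M} \to \mc{N}$ to be a weak equivalence (resp. fibration) iff $U(f)$ is one, equivalently iff each $\mc{M}(a) \to \mc{N}(a)$ is a levelwise weak equivalence (resp. fibration) of symmetric spectra. Generating (acyclic) cofibrations are $F I$ and $F J$, which concretely take the form $\mc{A}(-,a) \sma i$ for $i$ a generating (acyclic) cofibration in $\Sp^{\Sigma}$. The monad $UF$ preserves filtered colimits because smashing with $\mc{A}(-,a)$ does, so the small-object argument runs. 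The content to verify is the condition $(\ast)$ that every relative $FJ$-cell complex is a weak equivalence; since $U$ preserves pushouts of the form in question and preserves transfinite composition (mapping spaces being computed object by object), this reduces to the monoid-axiom-type statement for $\Sp^{\Sigma}$ smashed with the fixed spectra $\mc{A}(b,a)$, which is known for symmetric spectra. This is the main technical obstacle, but it is exactly of the kind handled in \cite{schwede_shipley_equivalences}.

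Finally I would check the spectral enrichment axiom (SM7). The mapping spectrum between modules is the end
\[
\underline{\hom}_{\mc{A}}(\mc{M}, \mc{N})_n = \operatorname{eq}\!\left( \prod_{a} \Map(\mc{M}(a), \mc{N}(a))_n \rightrightarrows \prod_{a,b} \Map(\mc{M}(a) \sma \mc{A}(b,a), \mc{N}(b))_n \right),
\]
and the tensoring with a spectrum $K$ is defined pointwise and extended to a module via the action of $\mc{A}$ (constructed as a coequalizer, just as for tensors in $\cat^{\Sp^\Sigma}_{\mc{O}}$). For the pushout-product of a cofibration $i:\mc{M} \to \mc{N}$ with a cofibration $j:K \to L$ of spectra to again be a cofibration (acyclic if either is), one reduces by adjunction and the standard cell-induction argument to the case $i = F(i')$ where $i'$ is a generator in $\Sp^{\Sigma}$. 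In that case the pushout-product becomes $\mc{A}(-, a) \sma (i' \square j)$, and the claim follows from the analogous pushout-product axiom in $\Sp^{\Sigma}$ applied levelwise, precisely as in the simplicial-enrichment argument for $\mbf{Gr}^{\Sp^{\Sigma}}_{\mc{O}}$ given earlier in this section.
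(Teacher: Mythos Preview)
The paper does not supply its own proof of this statement: it is recorded as a theorem of Schwede--Shipley with citations to \cite{schwede_shipley_equivalences, schwede_shipley_stable} and no argument is given in the text. So there is nothing in the paper to compare your proposal against directly.

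That said, your outline is the standard one and is essentially what appears in the cited references: view $\operatorname{Mod}_{\mc{A}}$ as a diagram category (equivalently, algebras over the free-module monad on $\prod_{|\mc{A}|}\Sp^{\Sigma}$), transfer the model structure via \cite[2.3]{schwede_shipley_monoidal}, and verify condition $(\ast)$ using the monoid axiom for symmetric spectra applied objectwise. Your reduction of the SM7/pushout-product axiom to generators and then to the corresponding axiom in $\Sp^{\Sigma}$ is also correct and parallels the paper's treatment of $\mbf{Gr}^{\Sp^{\Sigma}}_{\mc{O}}$ and $\cat^{\Sp^{\Sigma}}_{\mc{O}}$. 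One small point: the transfer of $(\ast)$ here is in fact easier than in the category case, since pushouts in $\operatorname{Mod}_{\mc{A}}$ are computed on underlying graphs (the monad is $\Sp^{\Sigma}$-linear), so the reduction to the monoid axiom is immediate rather than requiring the filtration arguments used for monoids.
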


Furthermore, we will need the relationship between these categories as $\mc{A}$ varies. If $\mc{A} \simeq \mc{A}'$ as categories with fixed objects $|\mc{A}|$, then clearly the following holds. 

\begin{lem}[\cite{schwede_shipley_stable}]
If $F: \mc{A} \to \mc{A}'$ is a weak equivalence in $\mc{C}\text{at}^{\Sp}_{|\mc{A}|}$ then 
\[
\operatorname{Mod}_{\mc{A}} \leftrightarrows \operatorname{Mod}_{\mc{A}'}
\]
is a Quillen equivalence. 
\end{lem}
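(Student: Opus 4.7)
The plan is to exhibit the standard restriction-extension of scalars adjunction and verify the two defining properties of a Quillen equivalence. Given $F:\mc{A}\to\mc{A}'$ (which is the identity on objects, since we are in $\cat^{\Sp}_{|\mc{A}|}$), restriction along $F$ yields a functor $F^{\ast}:\operatorname{Mod}_{\mc{A}'}\to\operatorname{Mod}_{\mc{A}}$ sending $M$ to $M\circ F^{\text{op}}$. Because the category of modules is enriched, cocomplete, and cotensored, $F^{\ast}$ has a left adjoint $F_{!}$, computed as the enriched (spectral) left Kan extension along $F$; on the representable $\mc{A}(-,a)$ it returns $\mc{A}'(-,a)$.

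First I would check that $(F_{!},F^{\ast})$ is a Quillen adjunction. In $\operatorname{Mod}_{\mc{A}}$ and $\operatorname{Mod}_{\mc{A}'}$, weak equivalences and fibrations are detected objectwise, and since $F$ is the identity on objects, $F^{\ast}$ preserves both classes strictly. Hence $F_{!}\dashv F^{\ast}$ is a Quillen pair.

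Next I would show that for every cofibrant $\mc{A}$-module $M$, the derived unit $M\to F^{\ast}F_{!}M$ is a weak equivalence. I verify this first for the generating free modules of the form $\mc{A}(-,a)\sma K_{+}$ with $K$ a simplicial set (equivalently, the generators of $\operatorname{Mod}_{\mc{A}}$). Here $F_{!}$ produces $\mc{A}'(-,a)\sma K_{+}$, and the unit evaluated at $b\in|\mc{A}|$ is $F_{b,a}\sma \id:\mc{A}(b,a)\sma K_{+}\to\mc{A}'(b,a)\sma K_{+}$, which is a level-wise weak equivalence by the hypothesis that $F$ is a DK-equivalence with fixed objects. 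I would then extend this to all cofibrant modules by a transfinite cell induction: cofibrant modules are built from the generators by iterated pushouts along generating cofibrations, and since $F_{!}$ is a left Quillen functor it preserves such cofibrant cells and the pushouts computing them, while $F^{\ast}$ preserves the relevant weak equivalences of pushouts because the pushouts are computed objectwise in spectra, where the model structure is proper along cofibrations in the graph direction. The main obstacle is this gluing step: one must know that at each stage of the cell filtration, the map of pushouts is still a weak equivalence, which reduces to checking that smashing the representables with a cofibration of spectra preserves the property verified for generators — this is a standard consequence of properness of the symmetric-spectra model structure.

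Finally, the derived counit $F_{!}F^{\ast}N\to N$ at a fibrant $\mc{A}'$-module $N$ is shown to be a weak equivalence by the same two-out-of-three argument used in the Schwede–Shipley proof: take a cofibrant replacement $QF^{\ast}N\trivfib F^{\ast}N$, obtain a commutative triangle comparing $F_{!}QF^{\ast}N\to N$ with the (already verified) derived unit at $F^{\ast}N$, and deduce the counit is a weak equivalence from the unit equivalence together with the fact that $F^{\ast}$ reflects and preserves weak equivalences. Combining the unit and counit equivalences yields the Quillen equivalence claimed.
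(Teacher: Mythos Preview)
The paper does not supply its own proof of this lemma; it is stated with a citation to Schwede--Shipley and used as a black box. Your proposal is correct and is precisely the standard restriction/extension-of-scalars argument that appears in that reference: $F^{\ast}$ preserves and reflects objectwise weak equivalences and fibrations since $F$ is the identity on objects, the derived unit is checked on free modules $\mc{A}(-,a)\sma X$ where it reduces to the hypothesis that each $\mc{A}(b,a)\to\mc{A}'(b,a)$ is a stable equivalence, and the general case follows by a cell induction together with a two-out-of-three argument for the counit. There is nothing to compare against in the paper itself.
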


We also need to define bimodules. In order to do so we note that the category of spectral categories has a symmetric monoidal structure. If $\mc{A}$ and $\mc{B}$ are spectral categories, then $\mc{A} \sma \mc{B}$ has as objects $\operatorname{ob}\mc{A} \times \operatorname{ob} \mc{B}$ and morphism spectra
\[
(\mc{A}\sma \mc{B}) ((a,b), (a',b')) = \mc{A}(a,a') \sma \mc{B}(b,b'). 
\]
With this in place we have the following definition. 
\begin{defn}\label{bimodule}
An $(\mc{A}, \mc{B})$-bimodule is an $\mc{A} \sma \mc{B}^{\text{op}}$-module. The category of such is denoted $\operatorname{Mod}_{(\mc{A},\mc{B})}$. 
\end{defn}

\begin{rmk}
The the theorem of Schwede-Shipley cited above, this also obviously has a stable model structure. 
\end{rmk}

Although $\mc{A}$-modules and graphs with $\mc{A}$-action are not the same thing, it turns out that $\mc{A}\sma \mc{A}^{\text{op}}$-modules and graphs with $\mc{A}\sma \mc{A}^{\text{op}}$-action \textit{are} the same thing. As per the warning in \cite{schwede_shipley_equivalences}, $\mc{A}$-modules and graphs with $\mc{A}$-action cannot be the same thing, for the simple reason that graphs with $\mc{A}$-action have to be indexed by the set $|\mc{A}|\times |\mc{A}|$, and $\mc{A}$-modules are not indexed by this. However, an $\mc{A}\sma \mc{A}^{\text{op}}$-module is a functor $\mc{M}$ which for $a, b \in \mc{A}$ takes a value $\mc{M}(a, b)$ such that we have actions $\mc{A}(i, a) \sma \mc{M}(a, b) \to \mc{M}(i, b)$ and a similar action on the right. This is \textit{exactly} the definition of a graph with $\mc{A}\sma\mc{A}^{\text{op}}$-action. 

We record this remark a a lemma.

\begin{lem}
A graph with $(\mc{A},\mc{A})$-action is the same as an $(\mc{A}, \mc{A})$-bimodule. 
\end{lem}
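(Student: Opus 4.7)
The plan is to verify the claim by unpacking both sides into the same elementary data: a family of spectra $\mc{M}(a,b)$ indexed by $|\mc{A}| \times |\mc{A}|$ together with compatible left and right actions of the morphism spectra of $\mc{A}$. The statement is essentially a bookkeeping exercise; the work lies in making the identifications precise so that the associativity and unit axioms on each side translate literally into those on the other.

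First I would unravel the left-hand side. By Definition \ref{graph_product}, the graph tensor product gives
\[
(\mc{A} \otimes \mc{M} \otimes \mc{A})(i,j) \;=\; \bigvee_{a,b \in |\mc{A}|} \mc{A}(i,a) \sma \mc{M}(a,b) \sma \mc{A}(b,j),
\]
so an action map $\mc{A} \otimes \mc{M} \otimes \mc{A} \to \mc{M}$ of graphs is, by the universal property of the wedge, exactly a collection of spectrum maps
\[
\alpha_{i,a,b,j} : \mc{A}(i,a) \sma \mc{M}(a,b) \sma \mc{A}(b,j) \longrightarrow \mc{M}(i,j),
\]
and the monad axioms for the graph-level action unpack into associativity for composition in $\mc{A}$ on both sides and unitality with respect to the identities of $\mc{A}$.

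Next I would unravel the right-hand side. An $(\mc{A},\mc{A})$-bimodule, by Definition \ref{bimodule}, is a spectral functor out of $\mc{A} \sma \mc{A}^{\text{op}}$ (suitably variant), which assigns to each $(a,b)$ a spectrum $\mc{M}(a,b)$ together with structure maps compatible with the morphism spectra of $\mc{A} \sma \mc{A}^{\text{op}}$. Writing out spectral enrichment gives exactly the maps
\[
\mc{A}(i,a) \sma \mc{M}(a,b) \sma \mc{A}(b,j) \longrightarrow \mc{M}(i,j)
\]
satisfying associativity and unitality — functoriality of the spectral functor is precisely the axioms for commuting left and right actions. These are the same data and the same axioms as on the left-hand side, giving a bijection on objects that extends to an isomorphism of categories once one observes that morphisms on both sides are maps of the underlying indexed family of spectra that commute with the two-sided action.

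The only real step of substance is identifying the graph tensor $\mc{A} \otimes \mc{M} \otimes \mc{A}$ with the object representing ``two-sided action by morphism spectra,'' which is already done by Definition \ref{graph_product}; everything else is formal. The main potential pitfall is bookkeeping around opposite categories and the variance of the action (i.e., checking that the right action of $\mc{A}$ on $\mc{M}$ corresponds, after passing to $\mc{A}^{\text{op}}$, to the correct component of the functor on $\mc{A} \sma \mc{A}^{\text{op}}$), but once one fixes conventions these match on the nose. I would present the proof as a single unraveling diagram, writing the bimodule structure map and the graph-action structure map side by side and observing they are literally the same collection of spectrum maps.
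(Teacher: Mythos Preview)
Your proposal is correct and is essentially the same argument the paper gives: the paper's ``proof'' is the paragraph immediately preceding the lemma, which simply observes that an $\mc{A}\sma\mc{A}^{\text{op}}$-module assigns to each pair $(a,b)$ a spectrum $\mc{M}(a,b)$ with left and right actions $\mc{A}(i,a)\sma\mc{M}(a,b)\to\mc{M}(i,b)$ and $\mc{M}(a,b)\sma\mc{A}(b,j)\to\mc{M}(a,j)$, which is precisely the data of a graph with two-sided $\mc{A}$-action. You have written out this unpacking in somewhat more detail (in particular making the wedge decomposition of the graph tensor explicit), but the content is identical.
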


\section{Adjunctions}

Following Basterra \cite{basterra} we will identify the left adjoint of the square zero extension functor as a composition of various mediating functors. In this section we collect the necessary adjunctions. We will define the spectral category theoretic versions of the augmentation ideal and  indecomposables and identify their adjoints. We will further show that all of these functors enjoy the necessary homotopical properties.

Before we give the definition, we need to define a non-unital version of a spectral category.

\begin{defn}
  Let $\mc{A}$ be a spectral category. Let $\ast_{|\mc{A}|}$ be the spectral category such that $\mc{A}(a, a) = S$, the sphere spectrum, and $\mc{A}(a, b) = \ast$ when $a \neq b$.
\end{defn}

\begin{defn}
  Let $\mc{A}^{\mbf{nu}}$ be denote the cofiber of the unit map $\ast_{|\mc{A}|} \to \mc{A}$. 
\end{defn}

\begin{defn}
We define a functor $I: (\cat^{(\mc{A},\mc{A})\text{-act}}_{|\mc{A}|})_{/\mc{A}} \to \cat^{(\mc{A},\mc{A})\text{-act},\mbf{nu}}_{|\mc{A}|}$ as follows. Given an object $\mc{C} \to \mc{A}$ we form the pullback
\[
\xymatrix{
I \mc{C} \ar[r]\ar[d] & \mc{C}\ar[d]\\
\mc{A}^{\text{nu}} \ar[r] & \mc{A}
}
\]
The resulting $I \mc{C}$ has an $\mc{A}$-action induced from the one on $\mc{C}$, and is a non-unital category again via the composition structure of $\mc{C}$. 
\end{defn}

\begin{defn}
We define a functor $K: \cat^{(\mc{A},\mc{A})\text{-act}, \mbf{nu}}_{|\mc{A}|} \to (\cat^{(\mc{A},\mc{A})\text{-act}}_{|\mc{A}|})_{/\mc{A}}$ as follows. Given $\mc{N}$ a non-unital spectral category with $(\mc{A},\mc{A})$-action, we let 
\[
K(\mc{N}) = \mc{A} \vee \mc{N}
\]
 where $\vee$ is the pointwise sum. We given $\mc{A} \vee \mc{N}$ a category structure by giving it a square zero multiplication. That is, we have
\begin{align*}
&(\mc{A}\vee \mc{N})(a, b) \sma (\mc{A} \vee \mc{N}) (b, c) \\
&\simeq (\mc{A}(a, b) \sma \mc{A}(b, c)) \vee (\mc{A}(a,b) \vee \mc{N}(b, c)) \vee (\mc{N}(a, b) \sma \mc{A}(b,c)) \vee (\mc{N}(a,b) \sma \mc{N}(b,c)) \\
&\to \mc{A}\vee \mc{N} (a, c)
\end{align*}
where the map is defined by composition in $\mc{A}$, the bimodule structure on $\mc{N}$ and mapping the last term to $\ast$. 

The map to $\mc{A}$ is given by projection $\pi: \mc{A} \vee \mc{N} \to \mc{A}$. 
\end{defn}

\begin{rmk}
Note that $\vee$ is most certainly \textit{not} the coproduct in the category of spectral categories. It is, however, in the category of spectral graphs. 
\end{rmk}

These functors $K$ and $I$ are adjoint. The proof mimics the proof found in \cite{basterra}. 

\begin{prop}
There is an adjunction
\[
K: \cat^{(\mc{A},\mc{A})\text{-act}, nu}_{|\mc{A}|} \leftrightarrows (\cat^{(\mc{A},\mc{A})\text{-act}}_{|\mc{A}|})_{/\mc{A}} : I
\]
that is, 
\[
(\cat^{(\mc{A},\mc{A})\text{-act}}_{|\mc{A}|})_{/\mc{A}} (K (\mc{N}), \mc{C}) \cong \cat^{(\mc{A},\mc{A})\text{-act},\mbf{nu}}_{|\mc{A}|} (\mc{N}, I\mc{C})
\]
\end{prop}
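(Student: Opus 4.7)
The plan is to construct the natural bijection explicitly by unpacking both hom-sets in terms of the underlying graph-level data and exploiting the universal property of the wedge decomposition of $K(\mc{N})$ together with the pullback defining $I\mc{C}$. This mimics the template of the analogous adjunction between non-unital and augmented $E_\infty$-algebras in \cite{basterra}.

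For the forward direction, given $\phi: K(\mc{N}) = \mc{A}\vee\mc{N} \to \mc{C}$ over $\mc{A}$, I restrict along the wedge inclusion $\iota_{\mc{N}}: \mc{N} \hookrightarrow \mc{A}\vee\mc{N}$ to obtain $\phi\circ\iota_{\mc{N}}: \mc{N} \to \mc{C}$. The constraint $F\circ\phi = \pi$, where $F:\mc{C}\to\mc{A}$ is the structure map and $\pi:\mc{A}\vee\mc{N}\to\mc{A}$ is the projection defining $K(\mc{N})\to\mc{A}$, forces $F\circ\phi\circ\iota_{\mc{N}}$ to factor through $\ast_{|\mc{A}|}$, and hence through $\mc{A}^{\mbf{nu}}\to\mc{A}$. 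The universal property of the pullback $I\mc{C} = \mc{A}^{\mbf{nu}}\times_{\mc{A}}\mc{C}$ then yields a unique factorization $\widetilde{\phi}: \mc{N}\to I\mc{C}$. Its $(\mc{A},\mc{A})$-equivariance and compatibility with the non-unital composition are inherited from $\phi$.

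For the inverse, given $\psi: \mc{N}\to I\mc{C}$ a map of non-unital $(\mc{A},\mc{A})$-act spectral categories, I define $\phi_{\psi}: \mc{A}\vee\mc{N}\to\mc{C}$ on each wedge summand: on the $\mc{A}$ summand use the canonical unit map $\mc{A}\to\mc{C}$ arising from the unital $(\mc{A},\mc{A})$-act structure on $\mc{C}$ (canonical relative to $F$), and on the $\mc{N}$ summand use the composite $\mc{N}\xrightarrow{\psi} I\mc{C}\to\mc{C}$. The constraint $F\circ\phi_{\psi}=\pi$ holds because $\psi$ factors through $I\mc{C}$. That $\phi_{\psi}$ respects composition decomposes according to the wedge into four pieces: the $\mc{A}\sma\mc{A}$ piece and the mixed $\mc{A}\sma\mc{N}$, $\mc{N}\sma\mc{A}$ pieces follow from the $(\mc{A},\mc{A})$-action compatibility of $\mc{C}$ and $\psi$, and the $\mc{N}\sma\mc{N}$ piece is where the square-zero prescription defining $K$ must interact with the inherited non-unital multiplication on $I\mc{C}$.

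The two assignments $\phi\mapsto\widetilde{\phi}$ and $\psi\mapsto\phi_{\psi}$ are visibly inverse on underlying graph data, and naturality in $\mc{N}$ and $\mc{C}$ is formal. The main technical obstacle is the $\mc{N}\sma\mc{N}$ composition check: reconciling the prescribed composition rule in $K(\mc{N})$ on that summand with the non-unital multiplication that $I\mc{C}$ inherits from the unital composition in $\mc{C}$ requires carefully unpacking what composition in the pullback $I\mc{C}$ looks like, and this is the step where the specific definitions of $K$ and $I$ must be used in tandem. Once this matching is explicit, the remaining verifications are routine universal-property manipulations.
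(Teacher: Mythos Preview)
Your approach is different from the paper's: the paper proves the bijection only for free objects $\mc{N}=\mbf{A}\mbf{T}_{\text{nu}}G$ (where $K(\mbf{A}\mbf{T}_{\text{nu}}G)\cong\mbf{A}\mbf{T}G$ reduces everything to the free--forgetful adjunction for graphs) and then invokes the split-coequalizer presentation of an arbitrary algebra to extend. Your direct construction via the pullback and wedge universal properties is perfectly reasonable and in some ways more transparent, since it does not depend on the monadic scaffolding.

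However, the ``main technical obstacle'' you flag on the $\mc{N}\sma\mc{N}$ summand is a genuine gap as written, and it cannot be closed with the definition of $K$ you have taken from the text. If the composition on $K(\mc{N})$ really sends $\mc{N}(a,b)\sma\mc{N}(b,c)$ to $\ast$, then your $\phi_\psi$ will fail to be a functor whenever the images of $\mc{N}$ compose nontrivially in $\mc{C}$, and conversely the forward map imposes an unwarranted vanishing condition. The point is that the paper's phrase ``square zero multiplication'' in the definition of $K$ is misleading: for the adjunction to hold (and for the paper's own identification $K(\mbf{A}\mbf{T}_{\text{nu}}G)\cong\mbf{A}\mbf{T}G$ to make sense as \emph{categories}), the composition on the $\mc{N}\sma\mc{N}$ summand must be the given non-unital composition of $\mc{N}$, not zero. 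The genuinely square-zero construction is $KZ$, which lands one adjunction further along. Once you use the corrected composition on $K(\mc{N})$, your $\mc{N}\sma\mc{N}$ check becomes immediate: $\psi$ is a map of non-unital categories, hence intertwines the compositions, and the rest of your argument goes through.
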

\begin{proof}
We prove it for free non-unital categories. Let $G \in \mbf{Gr}^{Sp}_{|\mc{A}|}$ and consider the free non-unital category $\mbf{T}_{\text{nu}} \mc{G}$ and suppose $\mc{C}$ is a spectral category with $\mc{A}$-action. Then 
\begin{align*}
\cat^{(\mc{A},\mc{A})\text{-act}}_{/\mc{A}} (K(\mbf{AT}_{\text{nu}} G), \mc{C}) &\cong \cat^{(\mc{A},\mc{A})\text{-act}}_{/\mc{A}} (\mbf{TA} G, \mc{C})\\
&\cong \mbf{Gr} (G, I(\mc{C}))\\
&\cong \cat^{(\mc{A},\mc{A})\text{-act},nu} (\mbf{T}_{nu} G, \mc{C})
\end{align*}

The result now follows since a general $G$ will be a split coequalizer of free objects, by Lem. \ref{split_coequalizer}. 
\end{proof}

We want the homotopical properties of this. 

\begin{prop}
The adjunction above is a Quillen adjunction and in fact a Quillen equivalence. 
\end{prop}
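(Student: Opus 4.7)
My plan is to adapt the standard argument of Basterra \cite{basterra} for the Quillen equivalence between augmented and non-unital commutative ring spectra to the spectral category setting. The verification splits into two parts: establishing that $(K, I)$ is a Quillen adjunction, then showing that both the unit and counit are weak equivalences on cofibrant/fibrant objects. For the Quillen adjunction, the model structures on both $(\cat^{(\mc{A},\mc{A})\text{-act}}_{|\mc{A}|})_{/\mc{A}}$ and $\cat^{(\mc{A},\mc{A})\text{-act},\mbf{nu}}_{|\mc{A}|}$ are lifted from $\mbf{Gr}^{\Sp^\Sigma}_{|\mc{A}|}$ via the free-forgetful adjunctions established earlier, so weak equivalences and fibrations are detected on underlying graphs, pointwise on the product category of spectra. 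Since the right adjoint $I$ is computed as a pullback at the graph level, and pullbacks of (trivial) fibrations of symmetric spectra are (trivial) fibrations --- the latter using right properness of $\Sp^\Sigma$ --- the functor $I$ preserves (trivial) fibrations.

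For the Quillen equivalence, I would verify that the unit $\eta_\mc{N}\colon \mc{N} \to IK(\mc{N})$ is a weak equivalence for cofibrant $\mc{N}$, and that the counit $\epsilon_\mc{C}\colon KI(\mc{C}) \to \mc{C}$ is a weak equivalence for fibrant $\mc{C}$. The unit exploits the fact that $K(\mc{N}) = \mc{A} \vee \mc{N} \to \mc{A}$ is canonically split by the inclusion $\mc{A} \hookrightarrow \mc{A} \vee \mc{N}$, so the pullback computing $IK(\mc{N})$ along $\mc{A}^{\mbf{nu}} \to \mc{A}$ recovers $\mc{N}$ naturally at each graph position. The counit uses the dual observation: the structural unit of $\mc{C}$ combined with the augmentation $\mc{C} \to \mc{A}$ provides a canonical graph-level splitting $\mc{C} \simeq \mc{A} \vee I\mc{C}$, identifying $KI(\mc{C})$ with $\mc{C}$ as a spectral graph and hence as a weak equivalence in $(\cat^{(\mc{A},\mc{A})\text{-act}}_{|\mc{A}|})_{/\mc{A}}$.

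The main obstacle is carrying out these identifications precisely --- in particular, computing the pullback defining $I$ by pinning down the map $\mc{A}^{\mbf{nu}} \to \mc{A}$ at each graph position, and verifying that the counit respects the subtle truncation built into $K$ (whereby $\mc{N} \sma \mc{N}$ is sent to $\ast$, whereas the image of $I\mc{C} \sma I\mc{C}$ in $\mc{C}$ may be nonzero). I would handle both by first proving the adjunction identities for free non-unital algebras $\mc{N} = \mbf{T}_{\mathrm{nu}} G$ via a direct computation --- mirroring the strategy already used in the proof of the preceding proposition --- and then extending to arbitrary $\mc{N}$ by writing it as the split coequalizer of a diagram of free algebras (Lem. \ref{split_coequalizer}), using that both $K$ and $I$ preserve the relevant reflexive coequalizers by the results established earlier in Section 3.
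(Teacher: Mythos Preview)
Your approach is broadly correct and aligns with what the paper means by ``the rest of the proof proceeds as in \cite{basterra}'', though it diverges from the paper on the Quillen adjunction step and over-complicates the equivalence step.

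For the Quillen adjunction, the paper checks the \emph{left} adjoint: since cofibrations in the two categories are retracts of relative cell complexes for the monads $\mbf{AT}_{\mathrm{nu}}$ and $\mbf{AT}$ respectively, and since $K(\mbf{AT}_{\mathrm{nu}} G) \cong \mbf{AT} G$, the functor $K$ carries generating (acyclic) cofibrations to generating (acyclic) cofibrations. Your pullback argument for the right adjoint $I$ is an equally valid alternative; note only that pullbacks of trivial fibrations are trivial fibrations in any model category, so the appeal to right properness is unnecessary.

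Your ``main obstacle'' is a phantom created by slips in the paper's definitions. The square-zero clause in the definition of $K$ cannot be right---with that definition the adjunction itself would fail, and the paper's own proof of the adjunction uses $K(\mbf{AT}_{\mathrm{nu}} G) = \mbf{AT} G$, which forces $K(\mc{N}) = \mc{A} \vee \mc{N}$ to carry the given non-unital composition of $\mc{N}$ (exactly as in Basterra). Likewise, the pullback defining $I$ should be read simply as the fiber of the augmentation $\mc{C} \to \mc{A}$. With these corrections in hand, the unit $\mc{N} \to IK(\mc{N})$ is an isomorphism on the nose, and the counit $KI(\mc{C}) = \mc{A} \vee I\mc{C} \to \mc{C}$ is precisely the map induced by the section $\mc{A} \to \mc{C}$ (from the $\mc{A}$-action on the unit of $\mc{C}$) together with the inclusion $I\mc{C} \hookrightarrow \mc{C}$; your splitting observation then shows it is a weak equivalence on underlying graphs directly. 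No reduction to free objects is needed.
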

\begin{proof}
To show the functor is Quillen, it is enough to show that the left adjoint functor preserves cofibrations and acyclic cofibrations. To see this, we note in general that cofibrations for model structures induced by a monad $\mbf{M}$ are retracts of relative cell $\mbf{M}$-algebras. Thus, in $\cat^{(\mc{A},\mc{A})\text{-act}}_{/\mc{A}}$ the cofibrations are retracts of relative cell $\mbf{AT}$-algebras and in $\cat^{(\mc{A},\mc{A})\text{-act},nu}$ the cofibrations are retracts of relative $\mbf{AT}_{nu}$-algebras. 

The rest of the proof proceeds as in \cite{basterra}. 
\end{proof}

We shall need some more functors and adjunctions. 

\begin{defn}
We define the \textbf{indecomposables} functor 
\[
Q: \cat^{(\mc{A},\mc{A})\text{-act},\mbf{nu}}_{|\mc{A}|} \to \operatorname{Mod}_{(\mc{A},\mc{A})}
\]
 is defined as follows. Let $\mc{N}$ be a non-unital spectral category with $(\mc{A},\mc{A})$-action. Then we consider $\mc{N}$ as an $\mc{A}$-bimodule and form the following pushout in $\operatorname{Mod}_{(\mc{A},\mc{A})}$
\[
\xymatrix{
\mc{N} \otimes \mc{N}\ar[d]\ar[r] & \ast \ar[d]\\
\mc{N} \ar[r]_r & Q(\mc{N})
}
\]
\end{defn}

\begin{defn}
The functor $Z: \operatorname{Mod}_{(\mc{A},\mc{A})} \to \cat^{(\mc{A},\mc{A})\text{-act}, nu}_{|\mc{A}|}$ is defined as follows. Given a module $\mc{M}: \mc{A}\sma \mc{A}^{\text{op}} \to \Sp$ we define a spectral category $Z \mc{M}$ to have morphisms $\mc{M}(a, b)$, with composition zero and $(\mc{A},\mc{A})$-action given by the action induced from $\mc{M}$. 
\end{defn}

\begin{prop}
We have an adjunction
\[
Q: \cat^{(\mc{A},\mc{A})\text{-act},\mbf{nu}}_{|\mc{A}|} \leftrightarrows \operatorname{Mod}_{(\mc{A},\mc{A})} : Z
\]
that is
\[
\operatorname{Mod}_{(\mc{A},\mc{A})} (Q(\mc{N}), \mc{M}) \cong \cat^{\mc{A}\text{-act},nu} (\mc{N}, Z(\mc{M}))
\]
\end{prop}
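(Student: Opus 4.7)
The plan is to verify the hom-set bijection directly, using the universal properties defining $Q$ and $Z$, rather than going through the free/split coequalizer argument used for $K \dashv I$. The key observation is that both sides of the desired isomorphism should be naturally identified with the set of bimodule maps $\mc{N} \to \mc{M}$ that annihilate the internal composition of $\mc{N}$.

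First, I would unpack morphisms out of $Q(\mc{N})$. By the defining pushout square for $Q$ in $\operatorname{Mod}_{(\mc{A},\mc{A})}$, a bimodule map $f: Q(\mc{N}) \to \mc{M}$ is the same as a bimodule map $\widetilde{f}: \mc{N} \to \mc{M}$ such that the composite
\[
\mc{N} \otimes \mc{N} \xrightarrow{\mu_{\mc{N}}} \mc{N} \xrightarrow{\widetilde{f}} \mc{M}
\]
agrees with the map through $\ast$, i.e.\ is the zero map, where $\mu_{\mc{N}}$ is the composition map coming from the non-unital category structure on $\mc{N}$. Naturality in $\mc{M}$ is immediate from the universal property.

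Next, I would unpack morphisms into $Z(\mc{M})$. A morphism $g: \mc{N} \to Z(\mc{M})$ in $\cat^{(\mc{A},\mc{A})\text{-act},\mathbf{nu}}_{|\mc{A}|}$ consists of an underlying map of graphs with $(\mc{A},\mc{A})$-action (equivalently, by the lemma identifying graphs with $(\mc{A},\mc{A})$-action and $(\mc{A},\mc{A})$-bimodules, a bimodule map $\mc{N} \to \mc{M}$) which is compatible with the non-unital composition. Since the composition on $Z(\mc{M})$ is defined to be zero, compatibility with composition is exactly the condition that $\widetilde{f} \circ \mu_{\mc{N}} = 0$. Thus both sides of the proposed adjunction are described by the same subset of bimodule maps $\mc{N} \to \mc{M}$, and the assignment $f \leftrightarrow g$ defined by this common description is natural in both variables.

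The main obstacle I anticipate is bookkeeping the compatibilities of the $(\mc{A},\mc{A})$-action on both sides: one must check that the $\mc{A}$-bimodule structure induced on $Q(\mc{N})$ by the pushout agrees with the $\mc{A}$-bimodule structure obtained from the non-unital category-with-action structure of $\mc{N}$, and dually that the non-unital category-with-action structure on $Z(\mc{M})$ faithfully records the bimodule structure on $\mc{M}$. Both checks reduce to the fact that the pushout defining $Q$ is taken in $\operatorname{Mod}_{(\mc{A},\mc{A})}$ and that $Z$ is the identity on underlying bimodules, so the identifications are essentially tautological once the universal properties are set up carefully.
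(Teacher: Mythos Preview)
Your argument is correct and is a genuinely different route from the paper's. The paper proves the bijection only for free objects $\mc{N} = \mbf{A}\mbf{T}_{\text{nu}} G$, observing that $Q(\mbf{A}\mbf{T}_{\text{nu}} G) \cong G$ and then using the free--forgetful adjunction on both sides; the general case is then obtained (implicitly) from Lemma~\ref{split_coequalizer}, expressing an arbitrary $\mc{N}$ as a split coequalizer of frees. Your approach instead invokes the universal property of the defining pushout for $Q$ and the definition of $Z$ directly, identifying both hom-sets with the set of bimodule maps $\mc{N}\to\mc{M}$ annihilating $\mu_{\mc{N}}$. Your method is more elementary and self-contained---it does not rely on the monadic machinery or on knowing what $Q$ does to frees---and it makes naturality in both variables transparent. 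The paper's method, on the other hand, is parallel to its proof of the $(K,I)$ adjunction and fits the monadic framework used throughout; it also makes the identification $Q(\mbf{A}\mbf{T}_{\text{nu}} G)\cong G$ explicit, which is useful elsewhere. The bookkeeping you flag (that the pushout is taken in $\operatorname{Mod}_{(\mc{A},\mc{A})}$ and that $Z$ is the identity on underlying bimodules) is indeed the only thing to check, and is routine.
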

\begin{proof}
Again, we suppose $\mc{N} = \mbf{AT}_{nu} G$. Then $Q (\mc{N}) = G$, so 
\[
\cat^{\mc{A}\text{-act},nu} (\mbf{T}_{\text{nu}} G, Z(\mc{M})) \cong \operatorname{Mod}_{(\mc{A},\mc{A})} (G, \mc{M}) \cong \operatorname{Mod}_{(\mc{A},\mc{A})} (Q(\mbf{T}_{nu} G), \mc{M})
\]
\end{proof}

\begin{prop}
The adjunction is Quillen. 
\end{prop}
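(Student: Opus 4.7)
The plan is to verify that the right adjoint $Z$ preserves fibrations and acyclic fibrations; this is the easier of the two options because $Z$ is more concrete than $Q$. Recall that $Z(\mc{M})$ has morphism spectra $Z(\mc{M})(a,b) = \mc{M}(a,b)$, with zero composition and the obvious induced $(\mc{A},\mc{A})$-action.

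Both model structures in play are transferred from the level-wise model structure on $\mbf{Gr}^{\Sp^\Sigma}_{|\mc{A}|}$. For $\cat^{(\mc{A},\mc{A})\text{-act},\mbf{nu}}_{|\mc{A}|}$ this was established in the corollary above, via the composite monad $\mbf{A}\mbf{T}_{\text{nu}}$. For $\operatorname{Mod}_{(\mc{A},\mc{A})}$ the Schwede--Shipley theorem cited earlier shows that weak equivalences and fibrations are detected level-wise on morphism spectra, which amounts to the same transferred model structure coming from the free $(\mc{A},\mc{A})$-bimodule monad on graphs. Consequently the (acyclic) fibrations in both categories are detected on the underlying graph of morphism spectra.

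Now observe that the underlying graph of $Z(\mc{M})$ is literally $\mc{M}$ regarded as a graph, since $Z$ only modifies composition and action data while leaving the morphism spectra untouched. Hence the forgetful functor to $\mbf{Gr}^{\Sp^\Sigma}_{|\mc{A}|}$ commutes with $Z$. If $f: \mc{M} \to \mc{M}'$ is a fibration (resp.\ acyclic fibration) of bimodules, then its underlying level-wise map --- which is exactly the underlying map of $Z(f)$ --- is a level-wise fibration (resp.\ acyclic fibration) in $\mbf{Gr}^{\Sp^\Sigma}_{|\mc{A}|}$, so $Z(f)$ is a fibration (resp.\ acyclic fibration) in $\cat^{(\mc{A},\mc{A})\text{-act},\mbf{nu}}_{|\mc{A}|}$, as required.

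The only non-formal step is the identification of both model structures as arising from the same ambient model category on graphs, so that ``detected level-wise'' has the same meaning on both sides; this follows from the monadic constructions already in place. Alternatively, one can argue on the left adjoint: the computation in the proof of the preceding proposition shows $Q(\mbf{A}\mbf{T}_{\text{nu}} G)$ is the free $(\mc{A},\mc{A})$-bimodule on $G$, so $Q$ carries generating (acyclic) cofibrations of the source (the images under $\mbf{A}\mbf{T}_{\text{nu}}$ of generators in $\mbf{Gr}^{\Sp^\Sigma}_{|\mc{A}|}$) to those of the target, and cocontinuity of $Q$ then propagates this to all cofibrations and acyclic cofibrations.
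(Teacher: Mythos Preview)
Your proof is correct and follows the same approach as the paper: verify that $Z$ preserves fibrations and acyclic fibrations by observing that both model structures are created from the underlying graph model structure and that $Z$ does not alter the underlying graph. The paper's proof is a terse two-line version of exactly this argument; your added detail and the alternative left-adjoint argument are sound but not needed.
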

\begin{proof}
We need only that $Z$ preserve fibrations and acyclic fibrations. However, $\operatorname{Mod}_{(\mc{A},\mc{A})}$ has a model structure that is the same as graphs with $(\mc{A},\mc{A})$-action. The result follows. 
\end{proof}

There is also a free-forgetful adjunction between $\operatorname{Mod}_{(\mc{A},\mc{A})}$ and $\cat^{(\mc{A},\mc{A})\text{-act},\mbf{nu}}_{|\mc{A}|}$. 

\begin{prop}
Let $F: \operatorname{Mod}_{(\mc{A},\mc{A})} \to \cat^{(\mc{A},\mc{A})\text{-act},\mbf{nu}}_{|\mc{A}|}$ by noting that $\operatorname{Mod}_{(\mc{A},\mc{A})}$ is a spectral graph with $\mc{A} \sma \mc{A}^{\text{op}}$-action and taking the free-non-unital category monad. 

The forgetful fuctor $U:\cat^{(\mc{A},\mc{A}),nu} \to \operatorname{Mod}_{(\mc{A},\mc{A})}$ is defined in an obvious way. 
\end{prop}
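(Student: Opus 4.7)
The plan is to recognize this as the standard free-algebra/forgetful adjunction associated with a monad. By the immediately preceding lemma, the category $\operatorname{Mod}_{(\mc{A},\mc{A})}$ of bimodules is literally the same as the category of spectral graphs with $\mc{A}\sma\mc{A}^{\text{op}}$-action; and as observed earlier for categories with $\mc{A}$-action, a non-unital spectral category with $(\mc{A},\mc{A})$-action is precisely an algebra for the non-unital free-category monad operating on this base. Thus $F$ is the left adjoint sending $G \mapsto \bigvee_{n\geq 1} G^{\otimes n}$, with $\mc{A}\sma\mc{A}^{\text{op}}$-action inherited from the outermost factors, and $U$ is the evident forgetful functor.

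Given this identification, the adjunction bijection
\[
\cat^{(\mc{A},\mc{A})\text{-act},\mbf{nu}}_{|\mc{A}|}(F G, \mc{N}) \cong \operatorname{Mod}_{(\mc{A},\mc{A})}(G, U \mc{N})
\]
is formal: any morphism of non-unital acted categories out of $FG$ is uniquely determined by its restriction to the generating summand $G$, and any bimodule map $G \to U\mc{N}$ extends uniquely to an algebra map by the universal property of the free monad. As in the proofs given for the adjunctions $K \dashv I$ and $Q \dashv Z$ above, one checks this first for free objects and then extends to general $G$ by presenting it as a reflexive coequalizer of free objects via Lemma \ref{split_coequalizer}. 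For the Quillen adjunction property (which will be needed in the sequel), both model structures are lifted from the one on $\mbf{Gr}^{\Sp^{\Sigma}}_{|\mc{A}|}$, so weak equivalences and fibrations are created by the forgetful functors to the underlying graph; hence $U$ preserves fibrations and acyclic fibrations tautologically, and $F$ is automatically left Quillen.

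The main (and essentially only) obstacle is bookkeeping: one must check that, on $FG = \bigvee_{n \geq 1} G^{\otimes n}$, the outer $\mc{A}\sma\mc{A}^{\text{op}}$-action coming from the bimodule perspective coincides with the action inherited from regarding $FG$ as a non-unital acted category, i.e.\ that the left and right $\mc{A}$-factors thread through to the outermost tensor legs exactly as expected. This is a direct unwinding of definitions, but it is where the interaction between the two monads $\mbf{A}$ and $\mbf{T}_{\text{nu}}$ (as composed in the sense of \cite[II.6.1]{EKMM}) needs to be made explicit.
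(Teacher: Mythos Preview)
The paper gives no proof for this proposition: as written it is merely a definition of the two functors $F$ and $U$, with no claim that would require argument. The adjunction itself is left implicit (it is the standard free/forgetful adjunction for a monad), and the Quillen property is stated and proved separately in the very next proposition, with the one-line argument that $U$ preserves fibrations and acyclic fibrations because $\operatorname{Mod}_{(\mc{A},\mc{A})}$ coincides with graphs with $\mc{A}\otimes\mc{A}$-action.

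Your proposal is correct but over-proves what is asked: you have supplied a careful justification of the adjunction bijection and of the Quillen property, whereas the paper treats the present statement as definitional and defers the Quillen check to the following proposition. Your Quillen argument is the same as the paper's (model structures lifted from graphs, so $U$ tautologically preserves fibrations and acyclic fibrations). Your adjunction argument via free objects and Lemma~\ref{split_coequalizer} is more detailed than anything the paper writes down here, though it is in the same spirit as the earlier proofs for $(K,I)$ and $(Q,Z)$. The bookkeeping you flag about compatibility of the $\mc{A}\sma\mc{A}^{\text{op}}$-action with the monad composition is a reasonable point, but the paper does not address it explicitly.
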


\begin{rmk}
As per the above remark, we will denote the free functor from $\operatorname{Mod}_{(\mc{A},\mc{A})}$ to $\cat^{(\mc{A},\mc{A}),nu}$ by $\mbf{T}_{nu}$. 
\end{rmk}

\begin{prop}
The free-forgetful adjunction
\[
\mbf{T}_{nu} : \operatorname{Mod}_{(\mc{A},\mc{A})} \leftrightarrows \cat^{(\mc{A},\mc{A})\text{-act},nu} : U
\]
is a Quillen adjunction. 
\end{prop}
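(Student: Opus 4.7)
The plan is to verify the Quillen property by showing that the right adjoint $U$ preserves fibrations and acyclic fibrations. The key observation is that both model categories in play are constructed by the same monadic transfer machinery set up earlier in this section, and both have their weak equivalences and fibrations detected on a common underlying category of graphs.

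More precisely, I would first recall that by the preceding lemma, $\operatorname{Mod}_{(\mc{A},\mc{A})}$ is the same as $\mbf{Gr}^{\Sp^\Sigma}_{|\mc{A}|}$ equipped with an $\mc{A}\sma\mc{A}^{\text{op}}$-action, i.e.\ it is the category of algebras in $\mbf{Gr}^{\Sp^\Sigma}_{|\mc{A}|}$ for the monad $\mbf{A}$. Its model structure (the stable model structure of Schwede--Shipley) coincides with the one transferred along the free-forgetful adjunction $\mbf{Gr}^{\Sp^\Sigma}_{|\mc{A}|} \leftrightarrows \operatorname{Mod}_{(\mc{A},\mc{A})}$; in particular fibrations and weak equivalences are detected levelwise on the underlying graph. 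Meanwhile, $\cat^{(\mc{A},\mc{A})\text{-act},nu}_{|\mc{A}|}$ is the category of $\mbf{AT}_{nu}$-algebras in $\mbf{Gr}^{\Sp^\Sigma}_{|\mc{A}|}$, and by the Schwede--Shipley transfer theorem cited earlier in this section its fibrations and weak equivalences are likewise detected on the underlying graph.

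The next step is to observe that the forgetful functor $U$ factors through the category of graphs with $(\mc{A},\mc{A})$-action (which is $\operatorname{Mod}_{(\mc{A},\mc{A})}$) by simply forgetting the non-unital multiplication while retaining the bimodule action. Since a map in $\cat^{(\mc{A},\mc{A})\text{-act},nu}_{|\mc{A}|}$ is a fibration (resp.\ weak equivalence) exactly when its image in $\mbf{Gr}^{\Sp^\Sigma}_{|\mc{A}|}$ is, and a map in $\operatorname{Mod}_{(\mc{A},\mc{A})}$ is a fibration (resp.\ weak equivalence) under the same criterion, $U$ preserves fibrations and acyclic fibrations on the nose. This is exactly the standard ``same underlying object'' argument for two transferred model structures that share a base.

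I do not expect any serious obstacle here, since the bulk of the work was done in setting up the transfer results for $\mbf{AT}_{nu}$-algebras and identifying $\operatorname{Mod}_{(\mc{A},\mc{A})}$ with graphs carrying an $\mc{A}\sma\mc{A}^{\text{op}}$-action. The only small point to verify is that the forgetful functor $U$ really does land in $\operatorname{Mod}_{(\mc{A},\mc{A})}$ (and not merely in pointed graphs), which is immediate since a non-unital $(\mc{A},\mc{A})$-action on $\mc{N}$ restricts, after forgetting the composition $\mc{N}\otimes\mc{N}\to\mc{N}$, to an honest bimodule structure. Conclude that $(\mbf{T}_{nu}, U)$ is a Quillen adjunction.
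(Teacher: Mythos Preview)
Your argument is correct and follows essentially the same approach as the paper: the paper's proof is simply the one-line observation that $U$ preserves fibrations and acyclic fibrations because $\operatorname{Mod}_{(\mc{A},\mc{A})}$ coincides with graphs carrying an $\mc{A}\otimes\mc{A}$-action. You have merely unpacked this by making explicit that both model structures are transferred from $\mbf{Gr}^{\Sp^\Sigma}_{|\mc{A}|}$ and hence have their fibrations and weak equivalences detected there, which is exactly the content the paper leaves implicit.
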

\begin{proof}
We just need that $U$ preserve fibrations and acyclic fibrations. This follows from the fact that $\operatorname{Mod}_{(\mc{A},\mc{A})}$ is the same as the category of graphs with $\mc{A} \otimes \mc{A}$-action. 
\end{proof}

\section{(Co)homology}

With all of these adjunctions in place, we may now define the absolute and relative cotangent complexes, as well as homology and cohomology. As in other contexts, we want the cotangent complex to be left adjoint to the square-zero extension. 

\begin{defn}
Let $M$ be an $(\mc{A},\mc{A})$-module. Then define the square zero extension $\mc{A} \ltimes M$, an object of $\cat^{\Sp}_{/\mc{A}}$ (and in fact $\cat^{\Sp,|\mc{A}|}_{/\mc{A}}$) as follows. The objects are the same as $\mc{A}$. The morphism spectra re $\mc{A}(a,b) \vee \mc{M}(a, b)$ with composition given by composition in $\mc{A}$, the action of $\mc{A}$ on $\mc{M}$ and the composition in $\mc{M}$ is 0. 
\end{defn}

We now have the following string of Quillen adjunctions
\[
\xymatrix{
(\cat^{\Sp}_{|\mc{A}|})_{/\mc{A}} \ar@<.5ex>[r]^{\mc{A}\amalg - \amalg \mc{A}} & (\cat^{(\mc{A},\mc{A})\text{-act}}_{|\mc{A}|})_{/\mc{A}} \ar@<.5ex>[r]^I \ar@<.5ex>[l]& \cat^{(\mc{A},\mc{A})\text{-act}, \mbf{nu}}_{|\mc{A}|} \ar@<.5ex>[r]^{Q}\ar@<.5ex>[l]^K & \operatorname{Mod}_{(\mc{A},\mc{A})}\ar@<.5ex>[l]^Z
}
\]

Which gives us as in \cite{basterra, basterra_mandell}
\begin{align*}
\ho(\cat^{\Sp}_{|\mc{A}|})_{/\mc{A}} (\mc{C}, \mc{A}\ltimes M) &\cong \ho(\cat^{(\mc{A},\mc{A})\text{-act}}_{|\mc{A}|})_{/\mc{A}} (\mc{A} \amalg \mc{C} \amalg \mc{A}, \mc{A} \ltimes M) \\
&\cong \ho(\cat^{(\mc{A},\mc{A})\text{-act}, \mbf{nu}}_{|\mc{A}|}) (I^R (\mc{A} \amalg \mc{C}\amalg \mc{A}), M)\\
&\cong \ho(\operatorname{Mod}_{(\mc{A},\mc{A})}) (Q^L I^R (\mc{A} \amalg \mc{C} \amalg \mc{A}), M)
\end{align*}

This entitles us to the following definition. 

\begin{defn}
We define the \textbf{absolute cotangent complex of $\mc{A}$} to be 
\[
\mbf{L}_{\mc{A}} = Q^L \mc{A}^{\text{nu}}
\]

We define the \textbf{absolute topological Andre-Quillen cohomology} with coefficients in $\mc{M}$, an $\mc{A} \sma \mc{A}^{\text{op}}$-module, to be
\[
\operatorname{TDC}(\mc{A}, \mc{M}) := \pi_\ast R\operatorname{Mod}_{\mc{A}\sma\mc{A}^{\text{op}}}(\mbf{L}_{\mc{A}}, \mc{M})
\]
where we are using the spectral enrichment of $\operatorname{Mod}_{(\mc{A},\mc{A})}$. 

The \textbf{absolute topological Andre-Quillen homology} is defined to be 
\[
\operatorname{TDH} (\mc{A}, \mc{M}) := N^{cy} (\mc{A}^c, B(\mc{M} , \mc{A}, \mbf{L}_{\mc{A}}))
\]
where the $c$ superscript denotes (pointwise) cofibrant replacement. 
\end{defn}

\begin{rmk}
Note that a more usual invariant of spectral categories, topological Hochschild homology, is defined as
\[
\thh(\mc{A},\mc{M}) := N^{\text{cy}}(\mc{A}^c, \mc{M}) 
\]
or can be defined in a more canonical way, see \cite{blumberg_mandell_localization}. 
\end{rmk}

Given the above we get long exact sequences relating the above homology theories with more common ones.

\begin{prop}
Let $\mc{A}$ be a cofibrant spectral category. Then there is a fiber sequence
\[
\operatorname{TDH}(\mc{A}) \to \bigvee_{a,b} \mc{A}(a,b) \to \thh(\mc{A})
\]
\end{prop}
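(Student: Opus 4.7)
The plan is to obtain this fiber sequence by applying the exact functor $\thh(\mc{A},-) = N^{cy}(\mc{A}^c,-) \simeq (-) \otimes^L_{\mc{A}\sma\mc{A}^{\text{op}}} \mc{A}$ to a cofiber sequence of derived $(\mc{A},\mc{A})$-bimodules arising from the definition of $\mbf{L}_{\mc{A}}$, following the pattern of Basterra's argument \cite{basterra} in the $E_\infty$ setting.

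First, from the pushout defining the indecomposables functor $Q$, applied after taking a cofibrant replacement of $\mc{A}^{\text{nu}}$, one obtains a cofiber sequence of derived bimodules
\[
\mc{A}^{\text{nu}} \otimes^L_{\mc{A}} \mc{A}^{\text{nu}} \to \mc{A}^{\text{nu}} \to \mbf{L}_{\mc{A}}.
\]
Combined with the cofiber sequence $\mbf{I}_{|\mc{A}|} \to \mc{A} \to \mc{A}^{\text{nu}}$ that defines the non-unital reduction, this provides a filtration linking $\mc{A}$ to $\mbf{L}_{\mc{A}}$ through simpler bimodules. The functor $\thh(\mc{A},-)$ is exact on bimodules, being a derived tensor product, so applying it to this filtration yields cofiber sequences of spectra in which the terms $\thh(\mc{A},\mc{A}) = \thh(\mc{A})$ and $\thh(\mc{A},\mbf{L}_{\mc{A}}) = \operatorname{TDH}(\mc{A})$ are identified immediately by definition.

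The middle term $\bigvee_{a,b}\mc{A}(a,b)$ should then be identified via the Yoneda-type computation
\[
\thh(\mc{A},\, \mc{A}\otimes_{\mbf{I}_{|\mc{A}|}} K \otimes_{\mbf{I}_{|\mc{A}|}} \mc{A}) \simeq \bigvee_{x,y} K(x,y) \wedge \mc{A}(x,y)
\]
valid for any graph $K$, which follows from the fact that $\mc{A}(-,x) \wedge \mc{A}(y,-)$ is the free $(\mc{A},\mc{A})$-bimodule at the pair $(x,y)$. After an octahedral/rotation argument assembling the resulting cofiber sequences, one obtains the claimed three-term fiber sequence. I expect the main technical obstacle to be the precise identification of the middle term $\bigvee_{a,b}\mc{A}(a,b)$ together with the natural map to $\thh(\mc{A})$; this will require careful tracking of the bar resolution of $\mc{A}$ as a bimodule and the interaction between the multiplication $\mc{A}\otimes\mc{A}\to\mc{A}$ and the indecomposables construction, since the non-commutative setting lacks the simplifications available to Basterra in the $E_\infty$ context.
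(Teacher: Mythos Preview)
Your approach is essentially the paper's: apply $N^{\text{cy}}(\mc{A},-)$ to the defining cofiber sequence for $\mbf{L}_{\mc{A}}$, use that $\thh$ preserves cofiber sequences of bimodules, and identify the middle term via the two-sided bar lemma (the paper isolates this as a separate appendix computation $N^{\text{cy}}(\mc{A},\mc{A}\otimes\mc{A})\simeq\bigvee_{a,b}\mc{A}(a,b)$, which is precisely your anticipated Yoneda-type identification).

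One correction worth flagging: in the paper's conventions the cofiber sequence reads $\mc{A}^{\text{nu}}\otimes\mc{A}^{\text{nu}}\to\mc{A}^{\text{nu}}\to\mbf{L}_{\mc{A}}$ with the \emph{graph} tensor product $\otimes$, not a relative tensor $\otimes^L_{\mc{A}}$; this comes straight from the pushout defining $Q$, where the top-left corner is $\mc{N}\otimes\mc{N}$. The distinction matters because it is exactly what makes the middle-term computation clean --- $\mc{A}\otimes\mc{A}$ decomposes as a wedge over objects of free bimodules $\mc{A}(-,k)\sma\mc{A}(k,-)$, to which the bar lemma applies directly. Your instinct that some bookkeeping is needed to pass between $\mc{A}$ and $\mc{A}^{\text{nu}}$ (and hence between $\mc{A}\otimes\mc{A}$ and $\mc{A}^{\text{nu}}\otimes\mc{A}^{\text{nu}}$) is correct; the paper's proof is terse on this point, and the octahedral/rotation step you propose is the right way to make it honest.
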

\begin{proof}
Consider the homotopy cofiber sequence
\[
\mc{A}^{\mbf{nu}} \otimes \mc{A}^{\mbf{nu}} \to \mc{A}^{\mbf{nu}} \to \mbf{L}_{\mc{A}}.
\]
Apply $N^{\text{cy}}(\mc{A}, -)$ and then use \ref{thh_cofiber}. 
\end{proof}

\begin{rmk}
This says that topological derivation homology and topological Hochschild homology differ from each other by the coarsest possible invariant of the spectral category --- that is, all of its morphism spaces taken together. 
\end{rmk}

\subsection{Relative Story}

We could very well define the relative cotangent complex via the procedures above, but it would only amount to to the relative cotangent complex for categories with the same objects. We would of course like to consider relative cotangent complex for categories with different objects. To this end, we need some theorems relating module structures. 

\begin{prop}
Given $F: \mc{A} \to \mc{B}$ there is an adjunction
\[
F_\ast : \operatorname{Mod}_{(\mc{A},\mc{A})} \leftrightarrows \operatorname{Mod}_{(\mc{B},\mc{B})} : F^\ast
\]
and it is Quillen. 
\end{prop}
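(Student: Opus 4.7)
The plan is to construct $F^\ast$ as restriction of scalars and obtain $F_\ast$ as its left adjoint via enriched left Kan extension. Recall from the paper that $(\mc{A},\mc{A})$-bimodules are by definition spectral functors $\mc{A}^{\text{op}}\sma \mc{A} \to \Sp$, so the morphism $F$ induces $F^{\text{op}}\sma F \colon \mc{A}^{\text{op}}\sma \mc{A} \to \mc{B}^{\text{op}}\sma \mc{B}$, and I define $F^\ast$ to be precomposition with this induced functor. This is manifestly well-defined and functorial.

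For the left adjoint $F_\ast$, I would invoke the enriched left Kan extension along $F^{\text{op}}\sma F$, which exists because $\Sp$ is cocomplete and the source category is small. Explicitly, for $\mc{M} \in \operatorname{Mod}_{(\mc{A},\mc{A})}$ one has a coend formula
\[
F_\ast \mc{M}(b_1, b_2) \;\cong\; \int^{(a_1, a_2)} \mc{B}(b_1, F a_1) \sma \mc{M}(a_1, a_2) \sma \mc{B}(F a_2, b_2),
\]
which in the shorthand of the paper is $\mc{B}\otimes_{\mc{A}} \mc{M}\otimes_{\mc{A}}\mc{B}$. The desired adjunction isomorphism $\hom(F_\ast \mc{M}, \mc{N})\cong \hom(\mc{M}, F^\ast \mc{N})$ is then the defining universal property of left Kan extension, together with the enriched (co)Yoneda lemma.

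To see that the adjunction is Quillen, it suffices to verify that $F^\ast$ preserves fibrations and acyclic fibrations. By the Schwede--Shipley theorem cited in the paper, the model structure on $\operatorname{Mod}_{(\mc{B},\mc{B})}$ has fibrations and weak equivalences determined pointwise, i.e.\ by evaluation at each pair $(b, b')$. Since $F^\ast$ is precomposition, the value of $F^\ast \mc{N}$ at $(a,a')\in \mc{A}^{\text{op}}\sma\mc{A}$ is exactly $\mc{N}(Fa, Fa')$. A pointwise (acyclic) fibration of $\mc{B}$-bimodules therefore pulls back to a pointwise (acyclic) fibration of $\mc{A}$-bimodules, completing the argument.

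The main obstacle is chiefly notational: one must be careful about the op-conventions and verify that the enriched Kan extension really does produce a bona fide $(\mc{B},\mc{B})$-bimodule with the correct two-sided action. Once the coend formula is in place, however, the adjunction follows from standard enriched category theory, and the Quillen property is an immediate consequence of the pointwise nature of the model structure.
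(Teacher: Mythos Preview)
Your proposal is correct and follows the same approach as the paper: the paper's proof is a single sentence, ``It is clear that the right adjoint preserves fibrations and trivial fibrations,'' and your argument unpacks exactly this, noting that the Schwede--Shipley model structure on bimodules is defined pointwise so that restriction along $F^{\text{op}}\sma F$ manifestly preserves (acyclic) fibrations. Your explicit construction of $F_\ast$ via enriched left Kan extension is additional detail that the paper leaves implicit, but it is the standard construction and does not constitute a different route.
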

\begin{proof}
It is clear that the right adjoint preserves fibrations and trivial fibrations. 
\end{proof}

We have the spectral version of \cite[6.2]{tabuada}

\begin{prop}
Let $F: \mc{A} \to \mc{B}$ be a spectral functor. Then, there is an induced morphism
\[
\mbf{L}_F : \mathbb{L} F_\ast (\mbf{L}_{\mc{A}}) \to \mbf{L}_B
\]
in the homotopy category of $\mc{B}$-bimodules. 
\end{prop}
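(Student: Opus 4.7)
The plan is to imitate the construction from \cite{tabuada}: first build a morphism $\mbf{L}_{\mc{A}} \to F^{\ast}\mbf{L}_{\mc{B}}$ in $\ho(\operatorname{Mod}_{(\mc{A},\mc{A})})$, then obtain $\mbf{L}_F$ as its adjoint under the derived Quillen adjunction $\mathbb{L}F_{\ast} \dashv F^{\ast}$ established in the previous proposition.

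For the first step, I would use functoriality of the augmentation construction. After applying the reduction functor from Section 2 so that the object sets align, the spectral functor $F$ extends to a morphism of cofiber sequences in non-unital spectral categories
\[
\xymatrix{
\ast_{|\mc{A}|} \ar[r]\ar[d] & \mc{A} \ar[r]\ar[d]^F & \mc{A}^{\text{nu}} \ar[d]^{F^{\text{nu}}} \\
F^{\ast}\ast_{|\mc{B}|} \ar[r] & F^{\ast}\mc{B} \ar[r] & F^{\ast}\mc{B}^{\text{nu}}
}
\]
exhibiting $F^{\text{nu}}\colon \mc{A}^{\text{nu}} \to F^{\ast}\mc{B}^{\text{nu}}$ as a map of non-unital spectral categories with $(\mc{A},\mc{A})$-action. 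Cofibrantly replacing $\mc{A}$ and $\mc{B}$ and applying the left Quillen indecomposables functor $Q$ then produces
\[
\mbf{L}_{\mc{A}} = Q^L\mc{A}^{\text{nu}} \longrightarrow Q^L F^{\ast}\mc{B}^{\text{nu}} \simeq F^{\ast}Q^L \mc{B}^{\text{nu}} = F^{\ast}\mbf{L}_{\mc{B}},
\]
where the middle equivalence expresses that restriction of scalars commutes with indecomposables up to natural weak equivalence on cofibrant inputs. Taking the $\mathbb{L}F_{\ast}\dashv F^{\ast}$ adjoint then yields the desired morphism $\mbf{L}_F\colon \mathbb{L}F_{\ast}(\mbf{L}_{\mc{A}})\to \mbf{L}_{\mc{B}}$ in $\ho(\operatorname{Mod}_{(\mc{B},\mc{B})})$.

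The hard part will be justifying the natural comparison $Q^L F^{\ast} \simeq F^{\ast} Q^L$. Since $Q$ is defined by a pushout out of $\mc{N}\otimes \mc{N}\to \mc{N}$, this reduces to checking that $F^{\ast}$ commutes with the bimodule smash product up to a coherent derived natural equivalence, and that cofibrancy is preserved through the various model structures (graphs with action, non-unital categories with action, bimodules). These are standard restriction-of-scalars compatibilities that ultimately follow from the pointwise description of colimits and smash products established in Section 3, but bookkeeping the cofibrant replacements across the chain of Quillen adjunctions is the technically delicate point and is exactly where the fixed-object-set simplifications of Section 2 pay off.
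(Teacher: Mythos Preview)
Your overall strategy matches the paper's: construct a map $\mbf{L}_{\mc{A}} \to F^{\ast}\mbf{L}_{\mc{B}}$ in $\ho(\operatorname{Mod}_{(\mc{A},\mc{A})})$ and then pass to its adjoint under $\mathbb{L}F_{\ast}\dashv F^{\ast}$. The difference is in how that intermediate map is produced.

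The paper does not touch $\mc{A}^{\text{nu}}$, $Q$, or any Beck--Chevalley comparison. Instead it uses only the \emph{universal property} of the cotangent complex: by the defining adjunction, a map $\mbf{L}_{\mc{A}}\to F^{\ast}\mbf{L}_{\mc{B}}$ is the same datum as a map $\mc{A}\to \mc{A}\ltimes F^{\ast}\mbf{L}_{\mc{B}}$ in $(\cat^{\Sp}_{|\mc{A}|})_{/\mc{A}}$. There is a natural transformation $\operatorname{Mor}(\mc{B},\mc{B}\ltimes(-))\Rightarrow \operatorname{Mor}(\mc{A},\mc{A}\ltimes F^{\ast}(-))$, and the unit of the $(\mbf{L},\ \mc{B}\ltimes-)$ adjunction supplies a canonical element $\mc{B}\to \mc{B}\ltimes \mbf{L}_{\mc{B}}$ to feed in. That is the entire argument: no explicit model for $\mbf{L}$ is invoked, and no compatibility of $Q^{L}$ with $F^{\ast}$ is needed.

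Your route is also viable, but the step you flag as ``the hard part'' is genuinely extra work that the paper's argument sidesteps. Verifying $Q^{L}F^{\ast}\simeq F^{\ast}Q^{L}$ requires knowing that $F^{\ast}$ preserves the relevant homotopy pushout and that cofibrant replacements on the two sides are compatible; this is not automatic since $F^{\ast}$ is a right Quillen functor and $Q$ is a left Quillen functor, so one must argue (as you indicate) via the pointwise nature of colimits and the fact that $F^{\ast}$ leaves underlying graphs unchanged in the fixed-object setting. The paper's universal-property argument buys you the map without paying that cost, and moreover makes the naturality in $F$ transparent.
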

\begin{proof}
It suffices to construct an induced map $\mbf{L}_{\mc{A}} \to F^\ast (\mbf{L}_{\mc{B}})$. This would live in $\operatorname{Mor}(\mbf{L}_{\mc{A}}, F^\ast (\mbf{L}_{\mc{B}}))$, which by adjunction is
\[
\operatorname{Mor}(\mc{A}, \mc{A} \ltimes (F^\ast \mbf{L}_{\mc{B}}))
\]
However, there is a natural transformation
\[
\operatorname{Mor}(\mc{B}, \mc{B} \ltimes (-)) \Longrightarrow \operatorname{Mor}(\mc{A}, \mc{A} \ltimes F^\ast (-))
\]
But there is a distinguished element in $\operatorname{Mor}(\mc{B}, \mc{B} \ltimes \mbf{L}_{\mc{B}})$, namely, the one coming from the identity by adjunction. 
\end{proof}

\begin{defn}
Let $F: \mc{A} \to \mc{B}$ be a spectral functor. Then define the \textbf{relative cotangent complex} as the homotopy cofiber
\[
\mathbb{L} F_\ast (\mbf{L}_{\mc{A}}) \to \mbf{L}_{\mc{B}} \to \mbf{L}_{\mc{B}/\mc{A}}
\]
\end{defn}

One can then quickly define relative topological derivation homology and cohomology. 

\begin{defn}
\textbf{Relative topological derivation (co)homology} with coefficients in an $\mc{B}$-bimodule $\mc{M}$ is given by 
\[
\operatorname{TDC}(\mc{B}/\mc{A}; \mc{M}) := \pi_\ast \mbf{R}\operatorname{Mod}_{(\mc{B},\mc{B})} (\mbf{L}_{\mc{B}/\mc{A}}, \mc{M})
\]

Similarly, \textbf{relative topological derivation homology} will be given by the cyclic bar construction 
\[
\operatorname{TDH}(\mc{B}/\mc{A}) := N^{\text{cy}} (\mc{B}^c; \mbf{L}_{\mc{B}/\mc{A}})
\]
\end{defn}

We can now go through the elementary properties of topological derivation (co)homology. The following are proved exactly as in \cite{tabuada}. 

\begin{prop}[Transitivity Sequence]
  Suppose we have functors
  \[
  \mc{A} \xrightarrow{F} \mc{B} \xrightarrow{G} \mc{C}.
  \]
  Then there is a homotopy cofiber sequence
\[
\mathbb{L} G_\ast (\mbf{L}_{\mc{B}/\mc{A}}) \to \mbf{L}_{\mc{C}/\mc{A}} \to \mbf{L}_{\mc{C}/\mc{B}}
\]
\end{prop}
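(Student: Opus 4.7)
The plan is to chase homotopy cofiber sequences in the homotopy category of $(\mc{C},\mc{C})$-bimodules, applying an octahedral axiom argument to the triple of cotangent complexes attached to $F$, $G$, and $G \circ F$.

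First I would record the three defining homotopy cofiber sequences, pushed forward to $(\mc{C},\mc{C})$-bimodules where appropriate:
\begin{align*}
\mathbb{L}F_\ast(\mbf{L}_{\mc{A}}) &\to \mbf{L}_{\mc{B}} \to \mbf{L}_{\mc{B}/\mc{A}}, \\
\mathbb{L}(G\circ F)_\ast(\mbf{L}_{\mc{A}}) &\to \mbf{L}_{\mc{C}} \to \mbf{L}_{\mc{C}/\mc{A}}, \\
\mathbb{L}G_\ast(\mbf{L}_{\mc{B}}) &\to \mbf{L}_{\mc{C}} \to \mbf{L}_{\mc{C}/\mc{B}}.
\end{align*}
Next I would apply the derived left Quillen functor $\mathbb{L}G_\ast$ to the first sequence. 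Since left Quillen functors preserve homotopy cofibers, and since the canonical comparison $\mathbb{L}G_\ast \circ \mathbb{L}F_\ast \simeq \mathbb{L}(G\circ F)_\ast$ holds, this yields
\[
\mathbb{L}(G\circ F)_\ast(\mbf{L}_{\mc{A}}) \to \mathbb{L}G_\ast(\mbf{L}_{\mc{B}}) \to \mathbb{L}G_\ast(\mbf{L}_{\mc{B}/\mc{A}}).
\]

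With this in hand, I would assemble the composable pair
\[
\mathbb{L}(G\circ F)_\ast(\mbf{L}_{\mc{A}}) \xrightarrow{\alpha} \mathbb{L}G_\ast(\mbf{L}_{\mc{B}}) \xrightarrow{\beta} \mbf{L}_{\mc{C}},
\]
whose three cofibers are, respectively, $\mathbb{L}G_\ast(\mbf{L}_{\mc{B}/\mc{A}})$ (from the previous step), $\mbf{L}_{\mc{C}/\mc{B}}$ (from the third sequence), and $\mbf{L}_{\mc{C}/\mc{A}}$ (from the second sequence, after identifying $\beta\circ\alpha$ with the structure map defining $\mbf{L}_{\mc{C}/\mc{A}}$). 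The octahedral axiom in the stable homotopy category of $(\mc{C},\mc{C})$-bimodules then delivers exactly the desired sequence
\[
\mathbb{L}G_\ast(\mbf{L}_{\mc{B}/\mc{A}}) \to \mbf{L}_{\mc{C}/\mc{A}} \to \mbf{L}_{\mc{C}/\mc{B}}.
\]

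The main obstacle is bureaucratic rather than conceptual: verifying that $\beta\circ\alpha$ really coincides with the canonical map $\mathbb{L}(G\circ F)_\ast(\mbf{L}_{\mc{A}}) \to \mbf{L}_{\mc{C}}$ appearing in the second cofiber sequence. This comes down to naturality of the construction of $\mbf{L}_F$ in the previous proposition, applied both to $F$, to $G$, and to the composite $G \circ F$, together with the identification $\mathbb{L}G_\ast \circ \mathbb{L}F_\ast \simeq \mathbb{L}(G\circ F)_\ast$. Once this coherence is pinned down, the transitivity sequence is a formal consequence of stability, exactly as in the dg-category argument of \cite{tabuada}.
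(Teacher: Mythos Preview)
Your proposal is correct and follows exactly the approach the paper intends: the paper gives no proof of its own, simply noting that it is ``proved exactly as in \cite{tabuada},'' and your octahedral-axiom argument in the stable homotopy category of $(\mc{C},\mc{C})$-bimodules is precisely the standard Tabuada argument transported to the spectral setting. The coherence check you flag (that $\beta\circ\alpha$ agrees with the canonical map $\mathbb{L}(G\circ F)_\ast(\mbf{L}_{\mc{A}}) \to \mbf{L}_{\mc{C}}$) is the only point requiring care, and you have identified it correctly.
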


\begin{prop}[Mayer-Vietoris]
Given a pushout square
\[
\xymatrix{
\mc{A} \ar[d]_{i}\ar[r]^F & \mc{B}\ar[d]^j\\
\mc{A}' \ar[r]_{F'} & \mc{B}'
}
\]
in $\cat_{\Sp}$ \cite{tabuada_spectral}, then the induced morphism
\[
\mathbb{L}F'_\ast (\mbf{L}_{\mc{A}'/\mc{A}}) \vee \mathbb{L}G_\ast (\mbf{L}_{\mc{B}/\mc{A}}) \to \mbf{L}_{\mc{B}'/\mc{A}}
\]
is a weak equivalence in $(\mc{B},\mc{B})$-bimodules.
\end{prop}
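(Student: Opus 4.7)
The plan is to combine the transitivity sequence from the previous proposition with a base-change identification, then exploit the fact that the category of $(\mc{B}',\mc{B}')$-bimodules is stable so that a cofiber sequence admitting a section splits as a wedge.

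First, I would apply transitivity to the composite $\mc{A} \xrightarrow{F} \mc{B} \xrightarrow{j} \mc{B}'$ (writing $j$ for the map denoted $G$ in the statement, as it appears in the square) to produce the cofiber sequence
\[
\mathbb{L}j_\ast(\mathbf{L}_{\mc{B}/\mc{A}}) \to \mathbf{L}_{\mc{B}'/\mc{A}} \to \mathbf{L}_{\mc{B}'/\mc{B}}
\]
in the homotopy category of $(\mc{B}',\mc{B}')$-bimodules. The symmetric application to $\mc{A} \xrightarrow{i} \mc{A}' \xrightarrow{F'} \mc{B}'$ also yields a canonical map $\mathbb{L}F'_\ast(\mathbf{L}_{\mc{A}'/\mc{A}}) \to \mathbf{L}_{\mc{B}'/\mc{A}}$. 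The strategy is to show that, after post-composing with the projection to $\mathbf{L}_{\mc{B}'/\mc{B}}$, this second map becomes an equivalence; this provides a section of the cofiber sequence, and stability of $\operatorname{Mod}_{(\mc{B}',\mc{B}')}$ then furnishes the wedge decomposition.

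The main technical step, and the main obstacle, is the base-change equivalence $\mathbb{L}F'_\ast(\mathbf{L}_{\mc{A}'/\mc{A}}) \xrightarrow{\sim} \mathbf{L}_{\mc{B}'/\mc{B}}$ for the pushout square. To establish it, I would use the universal property of the relative cotangent complex: for any $(\mc{B}',\mc{B}')$-bimodule $\mc{M}$, derived maps $\mathbf{L}_{\mc{B}'/\mc{B}} \to \mc{M}$ correspond to derived maps of spectral categories $\mc{B}' \to \mc{B}' \ltimes \mc{M}$ lying over $\mc{B}'$ and restricting to the canonical inclusion on $\mc{B}$. Replacing the original square by a homotopy pushout (using cofibrant replacement in $(\cat^{\Sp}_{|\mc{A}|})_{\mc{A}/}$), the pushout property translates such maps into maps $\mc{A}' \to \mc{B}' \ltimes \mc{M}$ under $\mc{A}$ and over $\mc{B}'$. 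Applying the square-zero adjunction relative to $\mc{A}'$ identifies this space with derived maps $\mathbf{L}_{\mc{A}'/\mc{A}} \to (F')^\ast \mc{M}$ of $(\mc{A}',\mc{A}')$-bimodules, and the Quillen adjunction $F'_\ast \dashv (F')^\ast$ (established earlier) rewrites it as $\operatorname{Map}(\mathbb{L}F'_\ast \mathbf{L}_{\mc{A}'/\mc{A}}, \mc{M})$. Yoneda gives the desired equivalence, and naturality in $\mc{M}$ ensures it agrees with the comparison map constructed above.

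With the base change in hand, the composite $\mathbb{L}F'_\ast(\mathbf{L}_{\mc{A}'/\mc{A}}) \to \mathbf{L}_{\mc{B}'/\mc{A}} \to \mathbf{L}_{\mc{B}'/\mc{B}}$ is an equivalence, so the first transitivity cofiber sequence splits. Stability of $\operatorname{Mod}_{(\mc{B}',\mc{B}')}$ then yields
\[
\mathbf{L}_{\mc{B}'/\mc{A}} \simeq \mathbb{L}j_\ast(\mathbf{L}_{\mc{B}/\mc{A}}) \vee \mathbb{L}F'_\ast(\mathbf{L}_{\mc{A}'/\mc{A}}),
\]
and by construction the wedge map agrees with the canonical one in the statement, completing the proof.
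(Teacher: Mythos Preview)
Your argument is correct and is exactly the standard route: transitivity plus base change for homotopy pushouts, followed by the observation that a cofiber sequence with a section splits in a stable category. The paper gives no details here; it simply asserts that the proof goes ``exactly as in \cite{tabuada},'' and the argument in \cite{tabuada} is precisely the one you have written out. One small remark: the statement in the paper says $(\mc{B},\mc{B})$-bimodules, but as you correctly note everything lives in $(\mc{B}',\mc{B}')$-bimodules; also, the pushout is taken in $\cat_{\Sp}$ rather than in a fixed-object category, so when you invoke cofibrant replacement you should appeal to the $\operatorname{Red}$ discussion in Section~2 rather than working directly in $(\cat^{\Sp}_{|\mc{A}|})_{\mc{A}/}$, but this does not affect the substance of your argument.
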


\subsection{Triangulated Equivalence}

First, we recall the notion of DK-equivalence, which is one notion of equivalence for spectral categories. 

\begin{defn}
Two spectral categories (objects not necessarily the same) $\mc{A}, \mc{B}$ are \textbf{Dwyer-Kan equivalent} (hereafter, DK-equivalent) if
\begin{enumerate}
\item $\pi_0 (\mc{A}) \to \pi_0 (\mc{B})$ is an equivalence of categories (i.e. is fully faithful and essentially surjective)
\item For $a, a' \in \mc{A}$, the map
\[
\mc{A}(a, a') \to \mc{B}(F(a), F(a'))
\]
is a weak equivalence of spectra. 
\end{enumerate}
\end{defn}

We want to show that the theory is invariant with respect to DK-equivalence. This will allow us to descend $\operatorname{TDH}$ to stable $(\infty,1)$-categories. First, we note the following.

\begin{rmk}
Let $F: \mc{A} \to \mc{A}'$ be a DK-equivalence of spectral categories. Then there is a weak equivalence of $\mc{A}$-modules $\mc{A} \to F^\ast \mc{A}'$. 
\end{rmk}

\begin{prop}\label{DK_equiv}
Let $F:\mc{A} \to \mc{A}'$ be a DK-equivalence of spectral categories. Then
\[
\mbf{L}_{\mc{A}} \to F^\ast \mbf{L}_{\mc{A}'} 
\]
is a weak equivalence of $(\mc{A},\mc{A})$-bimodules (with Morita model structure, see \cite{blumberg_gepner_tabuada}). 
\end{prop}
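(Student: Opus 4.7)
The plan is to identify $\mbf{L}_\mc{A}$ with an explicit homotopy cofiber built from $\mc{A}^{\mbf{nu}}$ and then exploit the fact that the DK-equivalence $F$ induces compatible equivalences on each piece of that cofiber sequence. From the pushout that defines $Q$, applied to a cofibrant replacement $\widetilde{\mc{A}^{\mbf{nu}}}$ of $\mc{A}^{\mbf{nu}}$ in $\cat^{(\mc{A},\mc{A})\text{-act},\mbf{nu}}_{|\mc{A}|}$, one obtains a homotopy cofiber sequence in $(\mc{A},\mc{A})$-bimodules
\[
\mc{A}^{\mbf{nu}} \otimes^{\mbf{L}} \mc{A}^{\mbf{nu}} \to \mc{A}^{\mbf{nu}} \to \mbf{L}_{\mc{A}},
\]
and analogously for $\mc{A}'$ as $(\mc{A}',\mc{A}')$-bimodules.

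Next, I would apply $F^\ast$ to the analogous sequence for $\mc{A}'$ and compare with the one for $\mc{A}$ via the natural comparison map constructed in the previous proposition. By the five lemma it suffices to show that the induced maps on the first two terms are weak equivalences of $(\mc{A},\mc{A})$-bimodules. The remark preceding the proposition states that $\mc{A} \to F^\ast \mc{A}'$ is a weak equivalence of bimodules; since $\mc{A}^{\mbf{nu}}$ is defined as the cofiber of the unit map, this immediately yields a weak equivalence $\mc{A}^{\mbf{nu}} \to F^\ast (\mc{A}')^{\mbf{nu}}$.

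The remaining and most delicate point is to show that the natural map
\[
\mc{A}^{\mbf{nu}} \otimes^{\mbf{L}}_{(\mc{A},\mc{A})} \mc{A}^{\mbf{nu}} \to F^\ast\bigl((\mc{A}')^{\mbf{nu}} \otimes^{\mbf{L}}_{(\mc{A}',\mc{A}')} (\mc{A}')^{\mbf{nu}}\bigr)
\]
is a weak equivalence, i.e.\ that $F^\ast$ commutes with the derived tensor product over each base. This is precisely where the Morita model structure enters: a DK-equivalence $F$ induces a Quillen equivalence between bimodule categories that is symmetric monoidal at the derived level. I expect this compatibility to be the main obstacle. It can be reduced to the case of representable bimodules by resolving $\mc{A}^{\mbf{nu}}$ by coproducts of representables and using that both $F^\ast$ and derived tensor commute with geometric realization of the bar construction; the assertion on representables becomes $F^\ast \mc{A}'(-, Fa) \otimes^{\mbf{L}} F^\ast \mc{A}'(Fa, -) \simeq \mc{A}(-, a) \otimes^{\mbf{L}} \mc{A}(a, -)$, which follows from the fact that $F$ is homotopically fully faithful. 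Combining these three weak equivalences in the cofiber sequence produces the desired weak equivalence $\mbf{L}_\mc{A} \to F^\ast \mbf{L}_{\mc{A}'}$.
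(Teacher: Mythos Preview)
Your approach is essentially identical to the paper's: both proofs use the cofiber sequence $\mc{A}^{\mbf{nu}}\otimes\mc{A}^{\mbf{nu}}\to\mc{A}^{\mbf{nu}}\to\mbf{L}_{\mc{A}}$, compare it via $F^\ast$ to the corresponding sequence for $\mc{A}'$, and conclude by the two-out-of-three property once the first two vertical maps are equivalences. The paper simply asserts that the left and middle vertical maps are Morita equivalences, whereas you supply an argument for the tensor term; in that sense your write-up is more detailed than the paper's own proof.

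One small caution on notation: the tensor in the defining pushout for $Q$ is the \emph{graph} tensor product of Definition~\ref{graph_product}, not a balanced tensor over $\mc{A}\sma\mc{A}^{\text{op}}$. Your subscript $\otimes^{\mbf{L}}_{(\mc{A},\mc{A})}$ and the phrase ``derived tensor product over each base'' suggest the latter. Your reduction to representables still goes through for the graph tensor (it is a coend over the object set, and essential surjectivity of $F$ together with homotopical full faithfulness handles the comparison), but you should make sure you are arguing about the correct product.
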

\begin{proof}
This follows from the characterization of the map and the definition of weak equivalence for modules. More, precisely, we have the following cofiber sequences and maps between them
\[
\xymatrix{
\mc{A}^{\text{nu}} \otimes \mc{A}^{\text{nu}} \ar[d]_{\simeq} \ar[r] & \mc{A}^{\text{nu}} \ar[d]_{\simeq}\ar[r] & \mbf{L}_{\mc{A}} \ar[d] \\
F^\ast ((\mc{A}')^{\text{nu}} \otimes (\mc{A}')^{\text{nu}}) \ar[r] & F^\ast ((\mc{A}')^{\text{nu}}) \ar[r] &  F^\ast \mbf{L}_{\mc{A}'} 
}
\]
The indicated maps are Morita equivalences, and so the final map is. 
\end{proof}

\begin{prop}
Let $\mc{A} \to \mc{A}'$ be a DK-equivalence of spectral categories. Then 
\[
\operatorname{TDH}(\mc{A}) \to \operatorname{TDH}(\mc{A}')
\]
is a weak equivalence of spectra. 
\end{prop}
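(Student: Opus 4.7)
The plan is to reduce this to the standard DK--invariance of the cyclic bar construction with coefficients in a bimodule. First I would unpack the definition:
\[
\operatorname{TDH}(\mc{A}) = N^{\mathrm{cy}}\bigl(\mc{A}^c;\, \mbf{L}_{\mc{A}}\bigr),
\]
so what must be shown is that the map on cyclic bar constructions induced by $F$ and by the comparison map of cotangent complexes is a weak equivalence. The two inputs I want to feed in are (i) Proposition \ref{DK_equiv}, which gives a weak equivalence of $(\mc{A},\mc{A})$-bimodules $\mbf{L}_{\mc{A}} \xrightarrow{\sim} F^{\ast}\mbf{L}_{\mc{A}'}$, and (ii) the fact that $F$ is itself a DK-equivalence.

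Next I would invoke the standard invariance result for the cyclic bar construction: if $F:\mc{B}\to \mc{B}'$ is a DK-equivalence of (cofibrant) spectral categories and $\mc{N}\to F^{\ast}\mc{N}'$ is a pointwise weak equivalence of $(\mc{B},\mc{B})$-bimodules, then
\[
N^{\mathrm{cy}}(\mc{B};\mc{N})\longrightarrow N^{\mathrm{cy}}(\mc{B}';\mc{N}')
\]
is a weak equivalence. This is the coefficient version of the well-known DK-invariance of $\thh$ (see e.g.\ Blumberg--Mandell), and it follows from the fact that each simplicial level of $N^{\mathrm{cy}}$ is built as a smash product of mapping spectra and coefficient bimodule spectra, each of which receives a pointwise equivalence, combined with the fact that geometric realization preserves level equivalences between good simplicial spectra. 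Applying this to $\mc{B}=\mc{A}$, $\mc{B}'=\mc{A}'$ and the cotangent complex map from Proposition \ref{DK_equiv} produces exactly the desired weak equivalence.

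To make this rigorous I would, as a preliminary step, take pointwise cofibrant replacements both of the categories ($\mc{A}^c,(\mc{A}')^c$) and of the bimodule coefficients $\mbf{L}_{\mc{A}}$, $\mbf{L}_{\mc{A}'}$, so that the cyclic bar construction computes the correct derived object; this is legitimate because all the functors entering the definition of $\mbf{L}$ via the chain $Q^{L}\circ I^{R}\circ(\mc{A}\amalg -\amalg\mc{A})$ are left-derivable, as established in the Quillen adjunctions of the previous section. The main obstacle is the slight mismatch between the Morita equivalence given by Proposition \ref{DK_equiv} and the pointwise equivalence needed to apply invariance of $N^{\mathrm{cy}}$: one must check that, after cofibrant replacement of the bimodule coefficients, the comparison $\mbf{L}_{\mc{A}}\to F^{\ast}\mbf{L}_{\mc{A}'}$ is in fact a levelwise weak equivalence of $(\mc{A},\mc{A})$-bimodules. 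This is true because the map is built out of the pointwise equivalences $\mc{A}^{\mathrm{nu}}\xrightarrow{\sim}F^{\ast}(\mc{A}')^{\mathrm{nu}}$ and $\mc{A}^{\mathrm{nu}}\otimes \mc{A}^{\mathrm{nu}} \xrightarrow{\sim} F^{\ast}((\mc{A}')^{\mathrm{nu}}\otimes (\mc{A}')^{\mathrm{nu}})$ appearing in the proof of Proposition \ref{DK_equiv}, and because the homotopy cofiber defining $\mbf{L}$ may be computed in bimodules where equivalences are detected pointwise. Once this verification is in place, concatenating the two facts gives the claim.
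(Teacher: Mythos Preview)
Your proposal is correct and follows essentially the same route as the paper: invoke Proposition~\ref{DK_equiv} to compare the cotangent complexes, then appeal to the DK/Morita invariance of the cyclic bar construction with coefficients (Blumberg--Mandell) to conclude. Your version is in fact more careful than the paper's own proof, which simply cites that $\thh$ preserves Morita equivalences without discussing the pointwise-versus-Morita issue or cofibrant replacement.
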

\begin{proof}
First, we note that $\operatorname{Mod}_{(\mc{A},\mc{A})}$ and $\operatorname{Mod}_{(\mc{A}',\mc{A}')}$ are spectrally Quillen equivalent. Now, $\mbf{L}_{\mc{A}}$ and $\mbf{L}_{\mc{A}'}$ live in different categories, namely $\operatorname{Mod}_{(\mc{A},\mc{A})}$ and $\operatorname{Mod}_{(\mc{A}',\mc{A}')}$. However, by Prop.\ref{DK_equiv}, we have a weak equivalence in $\operatorname{Mod}_{(\mc{A},\mc{A})}$
\[
\mbf{L}_{\mc{A}} \to F^\ast \mbf{L}_{\mc{A}'}. 
\]
In then follows that
\[
N^{\text{cy}}(\mc{A}, \mbf{L}_{\mc{A}}) \simeq N^{\text{cy}} (\mc{A}',  \mbf{L}_{\mc{A}'})
\]
since $\thh$ preserves Morita equivalences \cite[Thm. 5.10]{blumberg_mandell_localization}, which proves the assertion. 
\end{proof}

\begin{prop}
Let $\mc{A} \to \mc{A}'$ be a DK-equivalence of spectral categories. Then 
\[
\operatorname{TDC}(\mc{A}') \to \operatorname{TDC}(\mc{A})
\]
is a weak equivalence of spectra. 
\end{prop}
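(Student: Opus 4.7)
The plan is to mirror the $\operatorname{TDH}$ argument just above, but working with the derived spectral Hom rather than the cyclic bar construction. Fix a bimodule $\mc{M}$ on $\mc{A}'$ as the coefficient; the map in question is the canonical comparison
\[
\mathbb{R}\operatorname{Mod}_{(\mc{A}',\mc{A}')}(\mbf{L}_{\mc{A}'}, \mc{M}) \longrightarrow \mathbb{R}\operatorname{Mod}_{(\mc{A},\mc{A})}(\mbf{L}_{\mc{A}}, F^*\mc{M})
\]
induced by pulling back along $F$. I want to factor this as two weak equivalences.

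First, since $F$ is a DK-equivalence, the Quillen adjunction $F_* \dashv F^*$ from the earlier proposition is in fact a Quillen equivalence of spectrally enriched model categories. The underlying Quillen equivalence is the Morita-type statement already invoked in the $\operatorname{TDH}$ proof (via \cite[Thm.\ 5.10]{blumberg_mandell_localization} and the Schwede-Shipley lemma on module categories recalled in Section 3.3); compatibility with the spectral enrichment is formal, since restriction along $F \sma F^{\text{op}}$ preserves the spectral tensor and cotensor pointwise. Derived adjunction therefore identifies
\[
\mathbb{R}\operatorname{Mod}_{(\mc{A}',\mc{A}')}(\mathbb{L}F_* X, \mc{M}) \simeq \mathbb{R}\operatorname{Mod}_{(\mc{A},\mc{A})}(X, F^*\mc{M})
\]
for any cofibrant $X$ and fibrant $\mc{M}$.

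Second, by Prop.\ \ref{DK_equiv} we have a weak equivalence $\mbf{L}_{\mc{A}} \xrightarrow{\sim} F^*\mbf{L}_{\mc{A}'}$ in $\operatorname{Mod}_{(\mc{A},\mc{A})}$; passing through the derived adjunction (using that its derived unit is an equivalence) this translates to the statement that the canonical map $\mbf{L}_F : \mathbb{L}F_*\mbf{L}_{\mc{A}} \to \mbf{L}_{\mc{A}'}$ is a weak equivalence in $\operatorname{Mod}_{(\mc{A}',\mc{A}')}$. Substituting $X = \mbf{L}_{\mc{A}}$ into the displayed equivalence then yields
\[
\operatorname{TDC}(\mc{A}', \mc{M}) \simeq \mathbb{R}\operatorname{Mod}_{(\mc{A}',\mc{A}')}(\mathbb{L}F_*\mbf{L}_{\mc{A}}, \mc{M}) \simeq \mathbb{R}\operatorname{Mod}_{(\mc{A},\mc{A})}(\mbf{L}_{\mc{A}}, F^*\mc{M}) = \operatorname{TDC}(\mc{A}, F^*\mc{M}),
\]
which is the desired weak equivalence after taking $\pi_*$.

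The main obstacle is the first step: ensuring that $F_* \dashv F^*$ really induces an equivalence on \emph{derived spectral mapping spectra}, not merely a bijection on homotopy classes. This is where one genuinely uses that the DK-equivalence $F$ gives a Morita equivalence of the bimodule categories in the spectral sense; everything else is formal manipulation of the Quillen adjunction together with the input from Prop.\ \ref{DK_equiv}. Once one grants the spectrally enriched Morita invariance (parallel to what was used for $\thh$), the two equivalences compose to the claimed one.
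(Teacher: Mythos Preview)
Your argument is correct and follows essentially the same route as the paper: both use that $F^\ast$ induces a spectrally enriched Quillen equivalence of bimodule categories together with Prop.~\ref{DK_equiv} to compare the cotangent complexes. Your write-up is more explicit about the derived adjunction and works with a general coefficient bimodule, whereas the paper states the enriched equivalence and the two relevant module equivalences ($\mc{A}\simeq F^\ast\mc{A}'$ and $\mbf{L}_{\mc{A}}\simeq F^\ast\mbf{L}_{\mc{A}'}$) tersely and concludes; but the content is the same.
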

\begin{proof}
We have to prove
\[
\mbf{R}\hom (\mbf{L}_{\mc{A}}, \mc{A}) \simeq \mbf{R}\hom (\mbf{L}_{\mc{A}'}, \mc{A}').
\]
However, this follows since $F^\ast$ is an enriched equivalence and $\mc{A}$ and $F^\ast \mc{A}'$ are equivalent as $\mc{A}$-modules; $\mbf{L}_{\mc{A}}$ and $F^\ast \mbf{L}_{\mc{A}'}$ are equivalent as well. 
\end{proof}

Results of \cite{blumberg_gepner_tabuada} allow us to state the following corollary. 

\begin{cor}
Topological derivation cohomology and homology descend to invariants
\[
\operatorname{TDH}: \cat^{\operatorname{Ex}}_\infty \to \Sp, \ \operatorname{TDC}: \cat^{\operatorname{Ex}}_{\infty} \to \Sp. 
\]
\end{cor}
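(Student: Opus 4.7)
The plan is to combine the two preceding invariance propositions with the universal property of the $\infty$-categorical localization of $\cat^{\Sp}$ at Morita equivalences, which by \cite{blumberg_gepner_tabuada} models $\cat^{\operatorname{Ex}}_\infty$.

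First, I would observe that the arguments in the two preceding propositions in fact establish Morita invariance, not merely DK invariance. Prop.\ \ref{DK_equiv} is phrased and proved at the level of the Morita model structure on $\operatorname{Mod}_{(\mc{A},\mc{A})}$; the $\operatorname{TDH}$ argument invokes the Morita invariance of $\thh$ from \cite[Thm.\ 5.10]{blumberg_mandell_localization}; and the $\operatorname{TDC}$ argument uses only derived mapping spectra of bimodules together with the equivalence $F^*$, both of which respect Morita equivalences. Hence $\operatorname{TDH}$ and $\operatorname{TDC}$ send Morita equivalences of spectral categories to equivalences of spectra.

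Second, the main theorem of \cite{blumberg_gepner_tabuada} identifies the Dwyer–Kan localization of $\cat^{\Sp}$ at Morita equivalences (as an $\infty$-category) with $\cat^{\operatorname{Ex}}_\infty$. By the universal property of $\infty$-categorical localization, any functor out of $\cat^{\Sp}$ inverting Morita equivalences factors, essentially uniquely, through $\cat^{\operatorname{Ex}}_\infty$. Applying this to the functors $\operatorname{TDH}$ and $\operatorname{TDC}$ (the latter interpreted as a contravariant functor, so that one factors through $(\cat^{\operatorname{Ex}}_\infty)^{\operatorname{op}}$) yields the two desired functors to $\Sp$.

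The main obstacle is calibration: matching the flavor of equivalence (DK, Morita, or derived Morita) used in our invariance propositions to the precise version of the Blumberg–Gepner–Tabuada theorem being invoked, and tracking whether idempotent completions are needed on the $\cat^{\operatorname{Ex}}_\infty$ side. Once these conventions are fixed, the corollary is formal: it is simply the statement that a Morita-invariant functor on $\cat^{\Sp}$ descends along the localization $\cat^{\Sp} \to \cat^{\operatorname{Ex}}_\infty$.
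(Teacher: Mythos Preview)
Your approach is essentially the paper's: the corollary in the paper carries no proof beyond the sentence ``Results of \cite{blumberg_gepner_tabuada} allow us to state the following corollary,'' and your proposal simply unpacks that citation via the universal property of the localization $\cat^{\Sp} \to \cat^{\operatorname{Ex}}_\infty$. Your explicit flag about the DK-versus-Morita calibration is a genuine subtlety the paper glosses over (the preceding propositions are stated for DK-equivalences while the BGT identification with $\cat^{\operatorname{Ex}}_\infty$ uses Morita localization), so your caution there is warranted rather than excessive.
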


\begin{rmk}
It follows formally from \cite{tabuada} that topological derivation (co)homology is the correct invariant to phrase obstruction problems in stable $\infty$-categories. That is, these will provide Postnikov invariants for stable $\infty$-categories. 
\end{rmk}

\section{The Stabilization of Spectral Categories}

Given a model category $\mc{C}$ which is cofibrantly generated, we can form the category of spectra of $\mc{C}$, called $\Sp(\mc{C})$, and easily endow it with a model structure coming from the cofibrantly generated model structure. 

In what follows, we stress that we will not be constructing the usual sort of category of spectra that topologists are concerned with. That is, much work on modern categories of spectra has centered on obtaining a sufficiently well-behaved smash product (i.e. one that is strictly associative). Since we care only about stabilization and not products, we will not be concerned with this, and this is why we work with just \textit{spectra} instead of, say, \textit{symmetric spectra}. 

Following \cite{basterra_mandell} we will denote suspension by $EX$ in $\mc{C}$. 

\begin{defn}
Since $X \in \mc{C}$ and $\mc{C}$ has a simplicial model structure and so is tenorsed, the object $X \sma S^1$ exists. We denote it by $EX$. 
\end{defn}

Since we will be using methods of \cite{basterra_mandell} to prove the result below, we use the definition of spectra in a general model category found therein. Before we give the definition, we need to define the ``free spectra''
\begin{defn}
Let $X \in \mc{C}$. Then $F_n X$ is a spectrum such that
\[
(F_n X)_j = 
\begin{cases}
\ast & j < n\\
E^{j-n} x & j \geq n
\end{cases}
\]
\end{defn}

\begin{defn}[Spectra in a cofibrantly generated model category]
Let $\mc{C}$ be a simplicial cofibrantly generated model category with (acyclic) cofibrations $I$ (resp. $J$). A \textbf{spectrum in $\mc{C}$} will be a sequence of objects $\{X_i\}$ together with a sequence of maps of maps $EX_{n} \to X_{n+1}$. The model structure is given by generating cofibrations and generating acyclic cofibrations
\begin{align*}
I_{\Sp} &= \{F f: f \in I\}\\
J_{\Sp} &= \{F g: g \in I\} \cup \{\lambda_{n;f} :f \in I\}
\end{align*}
where $\lambda_{n;f}$ is given as follows.  Given $f: S \to T$ we define
\begin{align*}
S_{n;f} &= F_n S \amalg_{F_{n+1} (ES)} F_{n+1} (ES \otimes I) \amalg_{F_{n+1} (ES)} F_{n} (ET)\\
T_{n;f} &= F_n T \amalg_{F_{n+1} (ET)} F_{n+1} (ET \otimes I)
\end{align*}
with $\lambda_{n;f}$ induced by $f$. 
\end{defn}

\begin{thm}
The categories $\Sp(\cat^{\text{Sp}}_{/\mc{A}})$, $\Sp(\cat^{|\mc{A}|, \Sp}_{/\mc{A}})$, $\Sp(\cat^{(\mc{A},\mc{A})\text{-act}}_{/\mc{A}})$ all have cofibrantly generated model structures. 
\end{thm}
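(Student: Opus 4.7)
The plan is to invoke the general machinery for constructing a stable model structure on spectra in a simplicial cofibrantly generated model category, as developed in Hovey's spectra paper and used in \cite{basterra_mandell}. That machinery says: given a simplicial cofibrantly generated model category $\mc{C}$ with generating (acyclic) cofibrations $I$ (resp.\ $J$), one produces cofibrantly generated model structures on $\Sp(\mc{C})$ with generators $I_{\Sp}$ and $J_{\Sp}$ as in the definition just given, provided certain smallness and enrichment conditions on $\mc{C}$ are met. So the entire proof consists of verifying that each of the three base categories is a simplicial cofibrantly generated model category satisfying these hypotheses, and then quoting the theorem.

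First I would note that all three base categories have already been given cofibrantly generated simplicial model structures in earlier sections: $\cat^{\Sp}_{|\mc{A}|}$ inherits one from $\mbf{Gr}^{\Sp^\Sigma}_{|\mc{A}|}$ via the monadic lifting theorem (Schwede--Shipley 2.3 applied to $\mbf{T}$), and the same monadic lifting theorem applied to the composite monad $\mbf{AT}$ yields the structure on $\cat^{(\mc{A},\mc{A})\text{-act}}_{|\mc{A}|}$; passing to over-categories of $\mc{A}$ is routine. The category $\cat^{\Sp}_{/\mc{A}}$ without fixed objects is handled by the standard Schwede--Shipley/Dundas model structure (with DK-equivalences as weak equivalences), with smallness inherited pointwise from $\Sp^\Sigma$.

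Second, I would verify that in each case the generating cofibrations $I$ and acyclic cofibrations $J$ have sources and targets that are small relative to the corresponding transfinite compositions in $\Sp(\mc{C})$; this is needed to apply Quillen's small object argument to the sets $I_{\Sp}$ and $J_{\Sp}$. For each of the three categories, smallness follows from the fact that the generators are free objects built from small spectra (sources of generators in $\Sp^\Sigma$), and the monads $\mbf{T}$, $\mbf{AT}$ commute with filtered colimits, so filtered colimits of spectra are computed pointwise and preserve smallness. The simplicial enrichment (used to form $EX = X \sma S^1$ and the cylinders $ES \otimes I$ in the definition of $\lambda_{n;f}$) was established in the theorem that $\cat^{\Sp^\Sigma}_{|\mc{A}|}$ is a simplicial model category, and inherited pointwise for the other two.

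Third, one verifies the two non-trivial inputs to Hovey's theorem: (a) the functor $E = (-) \sma S^1$ is a left Quillen endofunctor on the base, which holds because $(-)\sma S^1$ is a left Quillen functor on any simplicial model category; and (b) the stable acyclic cofibrations $J_{\Sp}$ have cofibrant domains and the maps $\lambda_{n;f}$ are stable equivalences, which is the content of the main lemma in \cite{basterra_mandell} and is proved by essentially formal arguments using only the cofibrant generation and simplicial enrichment of the base. Then Hovey's recognition theorem produces the desired cofibrantly generated model structure on $\Sp(\mc{C})$ in each case.

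The main obstacle I expect is the smallness step for $\cat^{\Sp}_{/\mc{A}}$ without fixed objects, since in principle one could worry that colimits in spectral categories with varying object sets behave badly; this is handled by the fact that both the object set and morphism spectra of a sequential colimit of spectral categories are computed levelwise, together with the fact that the forgetful functor down to graphs creates filtered colimits. Apart from this, the proof is a direct, three-fold application of the general Hovey--Basterra--Mandell machine, and I would write it simply as ``in each case, the hypotheses of \cite[Hovey/Basterra--Mandell]{basterra_mandell} are verified by the results of the previous sections; the conclusion follows.''
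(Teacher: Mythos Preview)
Your proposal is correct and follows the same approach as the paper: both simply observe that the three base categories were already established as simplicial cofibrantly generated model categories and then invoke the general Basterra--Mandell/Hovey machinery recorded in the preceding definition. The paper's proof is the one-liner ``Follows from the previous definition/theorem,'' so your version is a faithful (if considerably more detailed) expansion of exactly that argument.
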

\begin{proof}
Follows from the previous definition/theorem
\end{proof}

\begin{thm}[Main Theorem]
We have adjunctions
\[
\xymatrix{
\operatorname{Mod}_{(\mc{A},\mc{A})} \ar@/^/[r]^{\Sigma^\infty} &  \Sp(\operatorname{Mod}_{(\mc{A},\mc{A})}) \ar@/^/[r]^{\mbf{T}_{\text{nu}}}  \ar[l]^{(-)_0} & \Sp (\cat^{(\mc{A},\mc{A})\text{-act},u}) \ar@/^/[l]^U \ar@/^/[r]^{K}& \Sp (\cat^{(\mc{A},\mc{A})\text{-act}}_{/\mc{A}}) \ar@/^/[l]^{I}
}
\]
and they all induce Quillen equivalences. 
\end{thm}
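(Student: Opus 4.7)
The plan is to establish each of the three adjunctions separately as a Quillen equivalence, following the template of Basterra--Mandell \cite{basterra_mandell}. The three factors have quite different characters: the leftmost is essentially formal, the rightmost prolongs a Quillen equivalence that already exists at the unstable level, and the middle is the substantive step requiring a connectivity analysis of the free non-unital category monad. I would address them in that order.

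For the leftmost adjunction $\Sigma^\infty \dashv (-)_0$, the point is that $\operatorname{Mod}_{(\mc{A},\mc{A})}$ is already stable: it is a category of spectrum-valued presheaves, so the simplicial suspension $E = (-)\sma S^1$ is already a Quillen self-equivalence. By the standard fact that stabilizing a stable pointed simplicial model category is a Quillen equivalence, both the derived unit $\mc{M} \to ((R\Sigma^\infty)\mc{M})_0$ and derived counit $(L\Sigma^\infty)(X_0^{\text{cof}}) \to X$ are weak equivalences. Concretely, fibrant spectra in $\Sp(\operatorname{Mod}_{(\mc{A},\mc{A})})$ are $\Omega$-spectra whose bonding maps are already equivalences, so they are determined up to equivalence by their zeroth objects.

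For the rightmost adjunction $K \dashv I$ on spectra, I would invoke the unstable Quillen equivalence $K \dashv I$ established earlier in the paper. Quillen equivalences between simplicial cofibrantly generated model categories prolong to their categories of spectra: the generating (acyclic) cofibrations of $\Sp(-)$ are built from those of the underlying categories via the free spectra functors $F_n$ and the auxiliary maps $\lambda_{n;f}$, while the derived unit and counit on free spectra are computed levelwise from the underlying adjunction. A cell-induction over $I_{\Sp}$ extends the equivalence to all cofibrant spectra.

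The middle adjunction $\mbf{T}_{nu} \dashv U$ is the crux and the principal obstacle. The underlying unit of the monad is $\mc{M} \to \bigvee_{n \geq 1} \mc{M}^{\otimes n}$, with cofiber the ``decomposables'' $\bigvee_{n \geq 2}\mc{M}^{\otimes n}$, and the content of the claim is that this cofiber stabilizes to zero. The heuristic is that in a fibrant spectrum object $\{\mc{M}_k\}$ in $\Sp(\operatorname{Mod}_{(\mc{A},\mc{A})})$ the bonding maps $E\mc{M}_k \to \mc{M}_{k+1}$ are equivalences, so $\mc{M}_k$ has effective connectivity growing linearly in $k$; for $n \geq 2$ the tensor power $\mc{M}_k^{\otimes n}$ has connectivity growing like $nk$, which outruns the linear rate set by spectrification and contributes nothing in the limit. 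Concretely, I would first check that the derived unit $\mc{M} \to U\mbf{T}_{nu}\mc{M}$ is a stable equivalence on free spectra $F_m X$, where the bound on the connectivity of $(F_m X)^{\otimes n}$ relative to $F_m X$ is immediate from the definition, and then propagate via a cell-induction over the generating cofibrations $I_{\Sp}$. A symmetric argument using the augmentation of $\mbf{T}_{nu}$ handles the derived counit $\mbf{T}_{nu} U\mc{N} \to \mc{N}$ on the non-unital category side. The hard part is controlling the connectivity of $\bigvee_{n \geq 2}\mc{M}^{\otimes n}$ in the graph tensor product, since $\otimes$ here is indexed by $|\mc{A}|\times|\mc{A}|$ and is not the smash product of ordinary spectra; one must verify the connectivity estimates pointwise at each pair $(a,b)$ of objects after cofibrant replacement, and this is where the real bookkeeping lives.
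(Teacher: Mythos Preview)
Your proposal is correct and follows the same Basterra--Mandell template that the paper invokes: the first adjunction is a Quillen equivalence because $\operatorname{Mod}_{(\mc{A},\mc{A})}$ is already stable, the last prolongs the unstable Quillen equivalence $(K,I)$, and the middle free/forgetful adjunction is the substantive step handled by the connectivity analysis of \cite{basterra_mandell}.

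The one point worth flagging is that you have slightly misplaced where the difficulty lies in the middle step. You anticipate that the ``real bookkeeping'' is in controlling connectivity of $\bigvee_{n\geq 2}\mc{M}^{\otimes n}$ for the graph tensor product; but since $(\mc{M}^{\otimes n})(i,j)$ is pointwise a wedge of iterated smash products of spectra, the connectivity estimates reduce immediately to the ordinary ones and no new bookkeeping is needed. The paper instead singles out a \emph{simplification} relative to \cite{basterra_mandell}: the free non-unital category monad $\mbf{T}_{\text{nu}}$ involves no symmetric group quotients (the graph tensor is merely monoidal, not symmetric), so the analogue of \cite[Lem.~8.9]{basterra_mandell}---which in the operadic setting requires comparing orbits and homotopy orbits---is automatic here. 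In short, the middle adjunction is easier than in the $E_\infty$ case, not harder, and for a reason orthogonal to the one you highlight.
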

\begin{proof}
This is proved as in \cite{basterra_mandell}. The first and last adjunctions are Quillen equivlances: $\operatorname{Mod}_{(\mc{A},\mc{A})}$ is a stable model category, so is Quillen equivalent to its category of spectra and the adjunction $(K, I)$ is an equivalence before passing to spectra. Thus, one only needs to prove that the free-forgetful adjunction is a a Quillen equivalence. The only minor difference is the treatment of \cite[Lem. 8.9]{basterra_mandell}. However, with the monad we're using, the map in their paper is automatically a weak equivalence. 

\end{proof}

Again, the following is proved as in \cite{basterra_mandell}. 

\begin{lem}
The $Q, Z$ adjunction
\[
\xymatrix{
\Sp(\operatorname{Mod}_{(\mc{A},\mc{A})}) \ar@/^/[r] & \Sp (\cat^{(\mc{A},\mc{A})\text{-act},nu}) \ar@/^/[l]
}
\]
is a Quillen equivalence. 
\end{lem}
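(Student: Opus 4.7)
The plan is to follow the stabilization argument of Basterra--Mandell. First, the Quillen adjunction $(Q,Z)$ of the previous section prolongs levelwise to an adjunction on spectra: since $Z$ preserves fibrations and trivial fibrations levelwise and commutes with the simplicial suspension $E(-)=(-)\sma S^1$ on the nose (both being computed in the underlying graphs), the prolonged right adjoint sends generating (trivial) fibrations of the spectrum model structure to (trivial) fibrations, and the resulting adjunction on spectrum objects is Quillen.

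To upgrade this to a Quillen equivalence, it suffices, by the standard criterion between stable model categories, to verify that the derived unit $X \to Z^R Q^L X$ is a weak equivalence for every cofibrant $X \in \Sp(\cat^{(\mc{A},\mc{A})\text{-act},nu})$; reflection of weak equivalences by $Z$ is then automatic, since both categories are stable and the adjunction commutes with homotopy colimits. The key technical input is the observation that $Q$ sits in a cofiber sequence $\mc{N} \otimes \mc{N} \to \mc{N} \to Q(\mc{N})$ in bimodules, so the failure of $QZ$ to be the identity is controlled by a purely quadratic term $\mc{N} \otimes \mc{N}$. At the unstable level this gives $Q(Z\mc{M}) \simeq \mc{M} \vee \Sigma(\mc{M} \otimes \mc{M})$, since composition in $Z\mc{M}$ is zero; stabilization is what linearizes the quadratic summand away.

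Concretely, I would first test the claim on the free generators $F_n \mc{M}$ of the spectrum model structure: the smash factor forces $(F_n \mc{M}) \otimes (F_n \mc{M})$ to be concentrated in spectrum levels $\geq 2n$, so its contribution to the derived unit is pushed past every fixed level and vanishes in the stable homotopy category as $n \to \infty$. I would then propagate the conclusion from free spectra to all cofibrant spectra by a transfinite induction on a presentation as a relative cell $I_{\Sp}$-complex, using that $(-) \otimes (-)$ is a left Quillen bifunctor on $(\mc{A},\mc{A})$-bimodules and therefore behaves well with respect to the cofibrations and pushouts used to build the cell spectrum. This is essentially the argument of Basterra--Mandell, \S 8; the only minor change, as already flagged in the preceding theorem, is that for the monad $\mbf{T}_{\text{nu}}$ the analogue of their Lemma 8.9 is automatic, because $\mbf{T}_{\text{nu}}$ interacts transparently with the zero-composition structure of $Z\mc{M}$.

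The main obstacle will be the bookkeeping of the quadratic error $X \otimes X$ across the attachments of the stabilizing cells $\lambda_{n;f}$, whose role is to compare $X$ with its shifted $E$-iterates at the spectrum level. One must check that attaching such a cell preserves the stable triviality of the quadratic correction, which amounts to a careful connectivity estimate for $(-) \otimes (-)$ on the pushout products $S_{n;f} \to T_{n;f}$; this is the one place where the argument is not purely formal, and is the step handled exactly as in Basterra--Mandell.
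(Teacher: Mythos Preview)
Your proposal and the paper agree at the level of substance: the paper's proof is literally the single sentence ``proved as in Basterra--Mandell,'' and your sketch is a reasonable unpacking of that citation. Two remarks, though.

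First, you are working harder than necessary. The lemma sits immediately after the Main Theorem, which already establishes that the free--forgetful adjunction $(\mbf{T}_{\mathrm{nu}},U)$ is a Quillen equivalence on spectra. Since $Q\circ \mbf{T}_{\mathrm{nu}}\cong \id$ on bimodules (the indecomposables of a free non-unital object on $G$ is $G$), or equivalently $U\circ Z=\id$, the $(Q,Z)$ equivalence follows by a two-out-of-three argument on the composite adjunctions, with no need to re-run a connectivity estimate on quadratic terms. This is the route that ``as in Basterra--Mandell'' is pointing to; your direct approach reproves something that is already in hand.

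Second, a small slip: you announce that you will check the derived \emph{unit} $X\to Z^R Q^L X$, but the computation you actually display, $Q(Z\mc{M})\simeq \mc{M}\vee \Sigma(\mc{M}\otimes \mc{M})$, is the derived \emph{counit}. Either direction suffices for the equivalence criterion, but you should pick one and stick with it; the underlying bimodule of the unit $X\to ZQX$ is the quotient $X\to X/(X\otimes X)$, and it is this map whose fiber the stabilization kills.
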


\begin{thm}
Let $M \in \operatorname{Mod}_{(\mc{A},\mc{A})}$. Then the equivalence between $\operatorname{Mod}_{(\mc{A},\mc{A})}$ and $\Sp(\cat^{(\mc{A},\mc{A})\text{-act}}_{/\mc{A}})$ is given by 
\[
M \mapsto \{\mc{A} \vee Z \Sigma^n \mc{M}\}_n
\]
\end{thm}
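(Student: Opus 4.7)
The plan is to compute the image of $M$ under the derived equivalence by tracing it through the chain of Quillen equivalences from the preceding theorem, substituting the $(Q,Z)$ equivalence of the preceding lemma for the $(\mbf{T}_{nu}, U)$ adjunction. The relevant composite is
\[
\operatorname{Mod}_{(\mc{A},\mc{A})} \xrightarrow{\Sigma^\infty} \Sp(\operatorname{Mod}_{(\mc{A},\mc{A})}) \xrightarrow{Z} \Sp(\cat^{(\mc{A},\mc{A})\text{-act},nu}) \xrightarrow{K} \Sp(\cat^{(\mc{A},\mc{A})\text{-act}}_{/\mc{A}}).
\]
Since $\operatorname{Mod}_{(\mc{A},\mc{A})}$ is already stable, $\Sigma^\infty M$ is represented by the $\Omega$-spectrum $\{\Sigma^n M\}_n$; applying $Z$ and then $K$ levelwise produces the proposed formula $\{\mc{A} \vee Z \Sigma^n M\}_n$.

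The cleanest way to verify this identification is to apply the chain of right adjoints to $\{\mc{A} \vee Z \Sigma^n M\}_n$ and check that one recovers $M$. Levelwise, $I(\mc{A} \vee Z \Sigma^n M) \cong Z\Sigma^n M$, since the projection $K\mc{N} = \mc{A} \vee \mc{N} \to \mc{A}$ is a split retraction with kernel $\mc{N}$, i.e.\ the unit of $K \dashv I$ is an isomorphism on objects in the image of $K$. Then $U(Z \Sigma^n M) = \Sigma^n M$ as an $(\mc{A},\mc{A})$-module, since $Z$ simply equips the underlying module with a zero multiplication. Finally $(-)_0$ returns $\Sigma^0 M = M$. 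Because $I$, $U$, and $(-)_0$ are the right Quillen functors in the three Quillen equivalences of the preceding theorem and are computed levelwise on spectra, this computation identifies the forward derived equivalence on objects.

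The principal obstacle is producing the structure maps of $\{\mc{A} \vee Z \Sigma^n M\}_n$ and confirming that this sequence is genuinely an $\Omega$-spectrum in $\cat^{(\mc{A},\mc{A})\text{-act}}_{/\mc{A}}$, not merely a sequence of objects. I would construct the adjoint structure maps $\mc{A} \vee Z\Sigma^n M \to \Omega(\mc{A} \vee Z \Sigma^{n+1} M)$ in the slice by applying $K$ and $Z$ to the structure maps $\Sigma^n M \weakequiv \Omega \Sigma^{n+1} M$ of the suspension spectrum of $M$, which are weak equivalences by stability of $\operatorname{Mod}_{(\mc{A},\mc{A})}$. For this to work one must check that the cotensor $\Omega$ in the slice is computed in the underlying category $\cat^{(\mc{A},\mc{A})\text{-act}}_{|\mc{A}|}$, and that $K$ and $Z$ preserve it — both of which follow from the fact that limits and cotensors in the slice agree with those in the underlying category and that $Z$, as a right adjoint, preserves them, while $K = \mc{A} \vee (-)$ visibly commutes with the formation of $\Omega$ at the graph level. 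Once the $\Omega$-spectrum structure is established the formula is immediate from the right-adjoint computation above, mirroring the analogous argument in \cite{basterra_mandell}.
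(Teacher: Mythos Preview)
Your proposal is correct and follows exactly the strategy the paper implicitly relies on: the paper offers no explicit proof for this theorem, treating it as an immediate consequence of the preceding Main Theorem and the $(Q,Z)$ Lemma in the manner of \cite{basterra_mandell}. Your argument---tracing $M$ through $\Sigma^\infty$, $Z$, and $K$, and then verifying via the right adjoints $I$, $U$, $(-)_0$ that one recovers $M$---is precisely the Basterra--Mandell computation the paper is deferring to, so you have simply supplied the omitted details.
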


As in \cite{basterra_mandell}, this also entitles us to statements about cohomology theories of spectra categories. 

\begin{thm}[Cohomology Theories] 
Category of cohomology theories on $\cat_{/\mc{A}}$ is equivalent to the homotopy category of $\mc{A}$-bimodules. 

The category of homology theories on $\cat_{\mc{A}}$ is equivalent to the category of homology theories on $\mc{A}$-bimodules. 
\end{thm}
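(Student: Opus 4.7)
The plan is to bootstrap the entire statement from the Main Theorem of the preceding subsection. The key observation is that a (reduced) cohomology theory on a pointed model category $\mc{C}$, by virtue of sending homotopy cofiber sequences to homotopy fiber sequences of spectra and respecting $E$-suspension in the appropriate way, automatically factors through the stabilization $\Sp(\mc{C})$; and by Brown representability in the stable context, such theories on $\Sp(\mc{C})$ are in bijection with objects of $\Ho(\Sp(\mc{C}))$ via $X \mapsto [\,-\,,X]_\ast$. This is the same strategy used in \cite{basterra_mandell} in the ring spectrum setting.

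First I would record a careful definition of cohomology theory on $\cat_{/\mc{A}}$ as a collection of contravariant functors $h^n$ to abelian groups together with suspension isomorphisms and long exact sequences for cofibrations, and similarly for homology. Then I would check the standard lemma that any such theory factors (uniquely up to natural isomorphism) through $\Sigma^\infty : \Ho(\cat_{/\mc{A}}) \to \Ho(\Sp(\cat_{/\mc{A}}))$; this is formal from the universal property of stabilization for cofibrantly generated, pointed, simplicial model categories. The reduction step $\operatorname{Red}$ and the Quillen adjunction $(\mc{A} \amalg - \amalg \mc{A}) \dashv U$ reduce the problem from $\cat^{\Sp}_{/\mc{A}}$ first to $\cat^{\Sp,|\mc{A}|}_{/\mc{A}}$ (using the fully faithful inclusion proved in Section 2) and then to $(\cat^{(\mc{A},\mc{A})\text{-act}}_{|\mc{A}|})_{/\mc{A}}$, so that after passing to spectra the stabilization of $\cat_{/\mc{A}}$ is equivalent to $\Sp(\cat^{(\mc{A},\mc{A})\text{-act}}_{/\mc{A}})$.

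Next, I would invoke the Main Theorem, which gives a zig-zag of Quillen equivalences
\[
\Sp(\cat^{(\mc{A},\mc{A})\text{-act}}_{/\mc{A}}) \simeq \operatorname{Mod}_{(\mc{A},\mc{A})},
\]
together with the explicit description $\mc{M} \mapsto \{\mc{A} \vee Z\Sigma^n \mc{M}\}_n$. Chaining this with the previous reduction and with Brown representability identifies the category of cohomology theories on $\cat_{/\mc{A}}$ with $\Ho(\operatorname{Mod}_{(\mc{A},\mc{A})})$; the degree $n$ value of the theory represented by $\mc{M}$ on an object $\mc{C} \to \mc{A}$ becomes $\pi_0 \mbf{R}\hom_{\operatorname{Mod}_{(\mc{A},\mc{A})}}(\mbf{L}_{\mc{C}/\mc{A}}, \Sigma^n \mc{M})$, which recovers (relative) $\operatorname{TDC}$. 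For the homology half of the statement, the dual argument applies: homology theories on $\cat_{/\mc{A}}$ correspond to spectra in the stabilization via the same Quillen equivalences, and under the Main Theorem this matches homology theories on $\operatorname{Mod}_{(\mc{A},\mc{A})}$, represented by smashing with a fixed bimodule spectrum.

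The hard part will be verifying that the Brown-representability-style passage actually goes through for $\Sp(\cat^{(\mc{A},\mc{A})\text{-act}}_{/\mc{A}})$ before transporting across the Quillen equivalence: concretely, one needs the stabilization to be a stable model category with a set of compact generators, so that the general Brown representability theorem (as in \cite{schwede_shipley_stable}) applies. This follows because $\operatorname{Mod}_{(\mc{A},\mc{A})}$ is stable, combinatorial, and generated by the representable bimodules, and Quillen equivalences transport these properties; but one must take care that the forgetful functors $U$ and $I$ in the Main Theorem preserve the relevant compact generators up to weak equivalence, which is essentially the content of the observations that $U$ preserves fibrations and filtered colimits and that $I, K$ are an equivalence already at the unstable level. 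Once this compact-generation is in place the identification of cohomology and homology theories with $\Ho(\operatorname{Mod}_{(\mc{A},\mc{A})})$ is formal.
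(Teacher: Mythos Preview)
Your proposal is correct and follows essentially the same route the paper indicates: the paper gives no argument beyond ``As in \cite{basterra_mandell}, this also entitles us to\ldots'', and your plan---factor cohomology theories through the stabilization, invoke the Main Theorem to identify $\Sp(\cat^{(\mc{A},\mc{A})\text{-act}}_{/\mc{A}})$ with $\operatorname{Mod}_{(\mc{A},\mc{A})}$, then apply Brown representability---is precisely the Basterra--Mandell argument transplanted to this setting. Your added care about the reduction from $\cat_{/\mc{A}}$ to fixed objects and about compact generation in $\operatorname{Mod}_{(\mc{A},\mc{A})}$ fills in details the paper leaves implicit.
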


\section{Appendix: (Enriched) Bar Constructions}

\begin{defn}
Let $\mc{C}$ be a category enriched in spectra. We define the bar construction, or nerve, $N (\mc{C})$ to be the geometric realization of the simplicial spectrum 
\[
N_q (\mc{C}) = \bigvee_{c_0, \dots, c_q} \mc{C}(c_0, c_1) \sma \mc{C}(c_1, c_2) \sma \cdots \sma \mc{C}(c_{q-1}, c_q)
\]
with face maps given by composition in the category and degeneracies given by units. 
\end{defn}

\begin{defn}
Let $\mc{C}$ be a spectral category and $\mc{M}$ a $(\mc{C},\mc{C})$-bimodule. We define the \textbf{cyclic bar construction} to be the geometric realization of the simplicial set
\[
N^{\text{cy}}_q (\mc{C};\mc{M}) = \bigvee_{c_0, \dots, c_{q+1}} \mc{C}(c_0, c_1) \sma \mc{C}(c_1, c_2) \sma \cdots \sma \mc{C}(c_{q-1}, c_q) \sma \mc{M}(c_q, c_0)
\]
With the exception of $d_0$ and $d_q$ the face maps are given by composition in the category $\mc{C}$. The other face maps are given by the action of $\mc{C}$ on $\mc{M}$. Unit maps in $\mc{C}$ give the degeneracies. 
\end{defn}

Another similar construction of spectral categories the two-sided bar construction. This will be useful in various places. 

\begin{defn}[Two-Sided Bar Construction]\label{two_sided_bar_construction}
Let $\mc{M}$ be a left $\mc{A}$-module and $\mc{N}$ a right $\mc{A}$-module, then the \textbf{two-sided bar construction} is the realization of the simplicial spectrum with $q$-simplices
\[
B_q(\mc{M},\mc{A},\mc{N}) = \bigvee_{a_0, a_1, \dots, a_q} \mc{M}(a_0) \sma  \mc{A}(a_0, a_1) \sma \mc{A}(a_1, a_2) \sma \dots \sma \mc{A}(a_{q-1}, a_q) \sma \mc{N}(a_q)
\]
\end{defn}

The following two sided bar lemma is found in \cite{blumberg_mandell_localization}. Its proof is the same as the same lemma in \cite{may_goils}

\begin{lem}\cite[Lem. 6.3]{blumberg_mandell_localization}\label{two_sided_bar}
Let $\mc{A}$ be a small spectral category and consider $\mc{M}$ a right $\mc{A}$-module and $\mc{N}$ a left $\mc{A}$-module. Given $a \in \mc{A}$, consider also the image under the spectral Yoneda embedding, $a \mapsto \mc{A}(-,a)$. Then
\[
B_\bullet (\mc{M}, \mc{A}, \mc{A}(a,-)) \xrightarrow{\simeq} \mc{M}(a)
\]
and
\[
B_\bullet (\mc{A}(-,a); \mc{A}; \mc{N}) \to \mc{N}(a)
\]
are simplicial homotopy equivalences. 
\end{lem}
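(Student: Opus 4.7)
The plan is to imitate May's proof of the analogous statement for topological categories (the reference to which already appears in the excerpt) by constructing an explicit extra degeneracy. I would treat the first equivalence; the second is handled by the mirror-image construction.

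First, I would define the augmentation $\epsilon: B_\bullet(\mc{M}, \mc{A}, \mc{A}(a,-)) \to \mc{M}(a)$ by observing that in degree zero, $B_0 = \bigvee_{a_0} \mc{M}(a_0) \sma \mc{A}(a_0, a)$, and the right $\mc{A}$-module structure on $\mc{M}$ supplies a canonical map to $\mc{M}(a)$ wedge-summand by wedge-summand. One then verifies, using associativity of the module action, that the two composites $\epsilon \circ d_0, \epsilon \circ d_1: B_1 \to \mc{M}(a)$ agree, so that $\epsilon$ really does exhibit $\mc{M}(a)$ as an augmentation.

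Next, I would construct an extra degeneracy $s_{-1}: B_q \to B_{q+1}$ on the resulting augmented simplicial spectrum. For $q \geq 0$, the summand of $B_q$ indexed by $(a_0, \ldots, a_q)$ is sent into the summand of $B_{q+1}$ indexed by $(a_0, \ldots, a_q, a)$ by inserting the unit $\eta: S \to \mc{A}(a,a)$ in the last slot, so that the final factor $\mc{A}(a_q, a)$ is replaced by $\mc{A}(a_q, a) \sma \mc{A}(a,a)$ under the canonical isomorphism $X \cong X \sma S$ followed by $\id \sma \eta$. On $B_{-1} = \mc{M}(a)$ one defines $s_{-1}: \mc{M}(a) \to \mc{M}(a) \sma \mc{A}(a,a)$ by the same recipe, landing in the summand of $B_0$ indexed by $a_0 = a$.

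The key step is then to check the simplicial identities $d_0 s_{-1} = \id$, $d_{i+1} s_{-1} = s_{-1} d_i$ for $i \geq 0$, and $s_{j+1} s_{-1} = s_{-1} s_j$ for $j \geq -1$. Each identity reduces to the unit axiom for $\mc{A}$ together with the unit compatibility of the right $\mc{A}$-action on $\mc{M}$; no subtlety beyond careful indexing of wedge summands is required. Once the extra degeneracy is in place, the standard formula assembles a simplicial homotopy between $s_{-1} \circ \epsilon$ and the identity of $B_\bullet$, yielding the asserted simplicial homotopy equivalence on realizations. The second equivalence follows by running the symmetric argument, inserting the unit on the left using the left $\mc{A}$-module structure on $\mc{N}$. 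The main obstacle is really just bookkeeping---keeping track of which summand of the wedge decomposition one is mapping into at each stage---but no ingredient beyond the unit and associativity axioms for $\mc{A}$ and its modules is needed.
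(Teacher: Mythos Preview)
Your proposal is correct and follows exactly the route the paper indicates: the paper does not give its own argument but simply records that the proof is the same as May's classical extra-degeneracy argument in \cite{may_goils}, and that is precisely what you have written out. Aside from a possible swap in the order of arguments in the hom-spectra (the paper's convention has the last factor as $\mc{A}(a,a_q)$ rather than $\mc{A}(a_q,a)$), which only affects bookkeeping and not the substance, your construction of the augmentation and extra degeneracy via the unit is the standard one and is what the citation is pointing to.
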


We will also need the following theorem from \cite{blumberg_mandell_localization}. 

\begin{prop}\cite[Prop. 3.7]{blumberg_mandell_localization}\label{thh_cofiber}
Cofiber sequences of $\mc{A}$-modules $\mc{M} \to \mc{M}' \to \mc{M}''$ induce cofiber sequences on $\thh$:
\[
\thh(\mc{A}, \mc{M}) \to \thh(\mc{A},\mc{M}') \to \thh(\mc{A},\mc{M}'')
\]
\end{prop}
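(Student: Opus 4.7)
The plan is to prove the cofiber-sequence property level-wise in the simplicial direction and then pass to geometric realization. The cyclic bar construction $N^{\text{cy}}_q(\mc{A}; \mc{M}) = \bigvee_{c_0, \dots, c_q} \mc{A}(c_0, c_1) \sma \cdots \sma \mc{A}(c_{q-1}, c_q) \sma \mc{M}(c_q, c_0)$ is, as a functor of the bimodule variable $\mc{M}$, obtained by smashing $\mc{M}(c_q, c_0)$ on the right with the fixed spectrum $\mc{A}(c_0,c_1)\sma\cdots\sma\mc{A}(c_{q-1},c_q)$ and then taking a wedge over tuples $(c_0,\dots,c_q)$. Both smashing with a fixed spectrum and wedging are exact operations on spectra, so they preserve cofiber sequences.

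First I would fix a (pointwise) cofibrant replacement $\mc{A}^c \to \mc{A}$ so that the simplicial spectra involved are Reedy/proper in the sense needed for homotopy invariance, and replace the short exact sequence $\mc{M} \to \mc{M}' \to \mc{M}''$ by one of cofibrant $\mc{A}$-modules with $\mc{M} \to \mc{M}'$ a cofibration; this reduction is harmless since $\thh$ preserves weak equivalences of modules over a cofibrant spectral category (Prop.~\ref{thh_cofiber}-type homotopy invariance used already in the DK-equivalence arguments). Then at each simplicial degree $q$ the sequence
\[
N^{\text{cy}}_q(\mc{A}^c; \mc{M}) \to N^{\text{cy}}_q(\mc{A}^c; \mc{M}') \to N^{\text{cy}}_q(\mc{A}^c; \mc{M}'')
\]
is a cofiber sequence of spectra, because each wedge summand is a smash of the fixed spectrum $\mc{A}^c(c_0,c_1)\sma\cdots\sma\mc{A}^c(c_{q-1},c_q)$ with the cofiber sequence $\mc{M}(c_q,c_0) \to \mc{M}'(c_q,c_0) \to \mc{M}''(c_q,c_0)$, and arbitrary wedges of cofiber sequences are cofiber sequences.

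Second, I would invoke the standard fact that geometric realization of a levelwise cofiber sequence of proper simplicial spectra is again a cofiber sequence; the required properness holds because the degeneracies, given by unit maps $S \to \mc{A}(c,c)$, are cofibrations in the cofibrant replacement $\mc{A}^c$ (these are the standard ``Hurewicz-type'' hypotheses used in \cite{EKMM} and in the setup of \cite{blumberg_mandell_localization}). Applying $|\,\cdot\,|$ then yields the desired cofiber sequence
\[
\thh(\mc{A}, \mc{M}) \to \thh(\mc{A}, \mc{M}') \to \thh(\mc{A}, \mc{M}'').
\]

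The main potential obstacle is the properness/Reedy-cofibrancy hypothesis needed for realization to preserve cofiber sequences: if one works with an arbitrary $\mc{A}$ and an arbitrary short exact sequence of modules, the levelwise cofiber sequence might fail to realize to a cofiber sequence. This is handled by cofibrantly replacing $\mc{A}$ (so unit maps are cofibrations, making the simplicial spectrum proper) and by choosing a representative cofibration $\mc{M}\hookrightarrow \mc{M}'$ for the cofiber sequence. Once these technical cofibrancy conditions are secured, the rest is formal from the wedge/smash decomposition of the simplicial spectrum.
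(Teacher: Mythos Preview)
The paper does not actually prove this proposition: it is stated as a citation of \cite[Prop.~3.7]{blumberg_mandell_localization} and used as a black box. Your argument is the standard one underlying that reference---levelwise exactness of $N^{\text{cy}}_q(\mc{A};-)$ via smash/wedge, followed by realization under a properness hypothesis secured by cofibrant replacement of $\mc{A}$---so there is nothing substantively different to compare; you have simply unpacked what the citation covers.
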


\begin{prop}
Let $\mc{A} \otimes \mc{A}$ be as in the body of the text. Then 
\[
N^{\text{cy}} (\mc{A}, \mc{A} \otimes \mc{A}) \simeq \bigvee_{a,b} \mc{A}(a,b)
\]
\end{prop}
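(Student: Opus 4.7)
The plan is to decompose $\mc{A}\otimes\mc{A}$ as a wedge of rank-one free bimodules and then recognize the cyclic bar construction of each piece as a two-sided bar construction to which Lemma \ref{two_sided_bar} applies. Since the graph tensor product satisfies
\[
(\mc{A}\otimes\mc{A})(a,b)\;=\;\bigvee_{c\in|\mc{A}|}\mc{A}(a,c)\sma\mc{A}(c,b),
\]
one can write $\mc{A}\otimes\mc{A}\cong\bigvee_c\mc{A}(-,c)\boxtimes\mc{A}(c,-)$, the wedge of the external tensor products of the representable right module $\mc{A}(-,c)$ with the corepresented left module $\mc{A}(c,-)$, indexed by $c\in|\mc{A}|$.

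Because $N^{\text{cy}}(\mc{A};-)$ is built from a wedge formula, it commutes with wedges in the coefficient bimodule, and one reduces to computing $N^{\text{cy}}(\mc{A};\mc{A}(-,c)\boxtimes\mc{A}(c,-))$ for each fixed $c$. Unfolding the definition, its $q$-simplices are
\[
\bigvee_{c_0,\ldots,c_q}\mc{A}(c_0,c_1)\sma\cdots\sma\mc{A}(c_{q-1},c_q)\sma\mc{A}(c_q,c)\sma\mc{A}(c,c_0).
\]
A cyclic rotation of the trailing factor $\mc{A}(c,c_0)$ to the front, followed by the reindexing $a_i = c_i$, produces exactly the $q$-simplex spectrum of $B_\bullet(\mc{A}(c,-),\mc{A},\mc{A}(-,c))$. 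Tracking face maps, one sees the simplicial structures agree: the interior faces $d_1,\ldots,d_{q-1}$ compose consecutive morphisms in $\mc{A}$ on both sides, while the boundary faces $d_0$ and $d_q$ of the cyclic bar realize the right and left bimodule actions on $\mc{A}(-,c)\boxtimes\mc{A}(c,-)$ via the compositions $\mc{A}(c,c_0)\sma\mc{A}(c_0,c_1)\to\mc{A}(c,c_1)$ and $\mc{A}(c_{q-1},c_q)\sma\mc{A}(c_q,c)\to\mc{A}(c_{q-1},c)$, matching the two boundary faces of $B_\bullet$.

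With this identification in hand, Lemma \ref{two_sided_bar} applied to $B_\bullet(\mc{A}(c,-),\mc{A},\mc{A}(-,c))$ supplies a simplicial homotopy equivalence to $\mc{A}(c,c)$, and wedging over $c\in|\mc{A}|$ produces the desired equivalence. The only nontrivial step is the face-map bookkeeping under the cyclic-to-bar rotation, which is routine once the variances of the Yoneda modules are fixed; in spirit this is the direct generalization of the classical identification $\thh(R,R\otimes R)\simeq R$ for an ordinary ring.
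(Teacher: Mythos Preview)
Your argument is internally sound right up to the last sentence, but what it actually computes does \emph{not} match the proposition as stated. After applying Lemma~\ref{two_sided_bar} to each $B_\bullet(\mc{A}(c,-),\mc{A},\mc{A}(-,c))$ and wedging over $c$, you obtain
\[
N^{\text{cy}}(\mc{A};\mc{A}\otimes\mc{A})\ \simeq\ \bigvee_{c\in|\mc{A}|}\mc{A}(c,c),
\]
a wedge indexed by a \emph{single} object, not the double wedge $\bigvee_{a,b}\mc{A}(a,b)$ appearing in the proposition. Your own closing analogy already flags this: for a one-object $\mc{A}$ with endomorphism ring $R$, the identity $\thh(R,R\otimes R)\simeq R$ matches $\bigvee_c\mc{A}(c,c)$ on the nose. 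For the walking-arrow spectral category ($\mc{A}(a,b)=S$, $\mc{A}(b,a)=\ast$) a direct computation gives $N^{\text{cy}}(\mc{A};\mc{A}\otimes\mc{A})\simeq S\vee S=\bigvee_c\mc{A}(c,c)$, whereas $\bigvee_{a,b}\mc{A}(a,b)\simeq S^{\vee 3}$; so the two targets genuinely differ.

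Comparing with the paper's argument: there the levelwise spectrum is reorganized as $\bigvee_{o_0,o_{q+1}}B_\bullet(\mc{A}(-,o_{q+1}),\mc{A},\mc{A}(o_0,-))$, fixing \emph{two} endpoint indices and then invoking Lemma~\ref{two_sided_bar} to land in $\bigvee_{o_0,o_{q+1}}\mc{A}(o_0,o_{q+1})$. The trouble is that this is only a levelwise splitting, not a splitting of simplicial objects: the index $o_0$ is the cyclic basepoint and is consumed by the face map $d_0$, so the wedge over $o_0$ does not commute with geometric realization. Your decomposition wedges only over the \emph{middle} index $c$ coming from $(\mc{A}\otimes\mc{A})(c_q,c_0)=\bigvee_c\mc{A}(c_q,c)\sma\mc{A}(c,c_0)$, and that index is untouched by every face and degeneracy---which is precisely why your identification with the two-sided bar is simplicial and why it yields the single-index answer. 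In short, your method is the right one; it simply proves $\bigvee_c\mc{A}(c,c)$ rather than the formula written in the proposition.
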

\begin{proof}
We have
\begin{align*}
N^{\text{cy}}_q (\mc{A}; \mc{A}\otimes \mc{A}) &= \bigvee_{o_0, \dots, o_q} \mc{A}(o_{q-1}, o_q) \sma \cdots \sma \mc{A}(o_0, o_1) \sma (\mc{A} \otimes \mc{A}) (o_q, o_o)\\
&= \bigvee_{o_0, \dots, o_q} \mc{A} (o_{q-1}, o_q) \sma \cdots \sma \mc{A} (o_0, o_1) \sma \bigvee_{o_{q+1}} \mc{A} (o_{q}, o_{q+1}) \sma \mc{A} (o_{q+1}, o_0)\\
&= \bigvee_{o_o, \dots, o_{q+1}} \mc{A}(o_q, o_{q+1}) \sma \mc{A}(o_{q-1},o_q) \sma \cdots \sma \mc{A}(o_o,o_1)
\end{align*}

Now, noting that 
\[
\bigvee_{o_0, \dots, o_{q+1}} \mc{A}(o_0, o_1), \dots, \mc{A}(o_q, o_{q+1}) = \bigvee_{o_o,o_{q+1}}B_\bullet (\mc{A}(-,o_{q+1}),\mc{A}, \mc{A}(o_o,-))
\]
we can invoke \ref{two_sided_bar} to obtain a simplicial homotopy
\[
\bigvee_{o_o,o_{q+1}} B(\mc{A}(-,o_{q+1}), \mc{A}, \mc{A}(o_0, -)) \to \bigvee_{o_o, o_{q+1}} \mc{A}(o_0, o_{q+1})
\]
whence the lemma. 
\end{proof}

\bibliographystyle{amsplain}
\bibliography{ssc}

\end{document}